\newtheorem{theorem}{Theorem}[section]
\newtheorem{proposition}[theorem]{Proposition}
\newtheorem{corollary}[theorem]{Corollary}
\newtheorem{lemma}[theorem]{Lemma}
\newtheorem{definition}[theorem]{Definition}
\newtheorem{remark}[theorem]{Remark}
\newtheorem{question}[theorem]{Question}
\newcommand{\C}{\mathbb{C}}
\newcommand{\Z}{\mathbb{Z}}
\newcommand{\PP}{\mathbb{P}}
\newcommand{\R}{\mathbb{R}}
\newcommand{\M}{M_{\cal T}}
\newcommand{\OM}{\omega_{\lambda_1,\lambda_2}}
\begin{document}

\def\CP{\mathbb{C}{\rm P}}
\def\tr{{\rm tr}\,}
\def\endproof{{$\Box$}}
\def\stab{{\rm stab}}

\title{Symplectic and K\"ahler structures on $\mathbb CP^1$-bundles over $\mathbb CP^2$}
\date{\today}
\author{Nicholas Lindsay and Dmitri Panov }

\maketitle

\begin{abstract}

We show that there exist symplectic structures on a $\mathbb CP^1$-bundle over $\mathbb CP^2$ that do not admit a compatible K\"ahler structure. These symplectic structures were originally constructed by Tolman  and they have a Hamiltonian $\mathbb T^2$-symmetry.  Tolman's manifold was shown to be diffeomorphic to a $\mathbb CP^1$-bundle over $\mathbb CP^{2}$ by Goertsches, Konstantis, and Zoller. 
The proof of our result relies on Mori theory, and on classical facts about holomorphic vector bundles over $\mathbb CP^{2}$.

\end{abstract}

\tableofcontents
   
\section{Introduction}

In  \cite{To1} Tolman constructed a remarkable family of symplectic forms $\OM$ on a compact $6$-manifold $\M$, that have a Hamiltonian $\mathbb{T}^2$-action, and yet don't admit a $\mathbb{T}^2$-invariant compatible K\"ahler metric\footnote{We recall Tolman's construction in Section \ref{sec:tolconstruction}, where we also give the definition of Tolman's manifold $\M$, see Definition \ref{def:tolman}.}. Recently, in \cite{GKZ,GKZ2} Goertsches, Konstantis, and Zoller proved that Tolman's manifold $\M$ is diffeomorphic to the projectivisation $\PP(E)$ of a complex rank two bundle $E$ over $\mathbb CP^2$, and hence it admits some K\"ahler structure. The bundle $E$ has $c_1(E)=-1$ and $c_2(E)=-1$, and using such a description, one can restate Tolman's theorem in a more explicit way. 

\begin{theorem}[\!\!\!\cite{To1}, \cite{GKZ}] \label{twosymplectic} Let $E$ be a complex rank two-bundle over $\mathbb CP^2$ with $c_1(E)=-1$, $c_2(E)=-1$, and let let $p:\PP(E) \rightarrow \C\PP^2$ be the projection. Denote by $x$ the generator of $H^2(\mathbb CP^2,\mathbb Z)$. Then for a certain effective $\mathbb T^2$-action on $\mathbb{P}(E)$ and  for any positive $0<\lambda_1< \lambda_2$ there is a $\mathbb T^2$-equivariant symplectic form $\OM$ on $\mathbb{P}(E)$ without a compatible $\mathbb T^2$-invariant K\"ahler metric, and such that 

$$[\OM]= \lambda_1c_1(\mathcal{O}_{\mathbb{P}(E)}(1))+\lambda_2 p^*(x)\in H^2(\mathbb{P}(E),\mathbb R).$$
\end{theorem}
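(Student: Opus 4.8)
The plan is to assemble the statement from the two cited ingredients and then to pin down the cohomology class explicitly. Since Tolman already established the non-K\"ahler obstruction and Goertsches--Konstantis--Zoller already established the diffeomorphism type, the only genuinely new content here is the identification of $[\OM]$ in terms of the natural generators of $H^2(\PP(E),\R)$.

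First I would recall from Section \ref{sec:tolconstruction} Tolman's family of symplectic forms $\OM$ on $\M$ together with their Hamiltonian $\TT^2$-action; her theorem supplies precisely the assertion that no $\TT^2$-invariant compatible K\"ahler metric exists for these forms. The GKZ diffeomorphism $\M \cong \PP(E)$ with $c_1(E)=-1$, $c_2(E)=-1$ transports the torus action and each form $\OM$ to $\PP(E)$, and the non-existence of an invariant compatible K\"ahler metric is preserved, being an intrinsic property of the triple (manifold, symplectic form, torus action). It therefore remains only to compute $[\OM]$.

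For the cohomology class I would use that $H^2(\PP(E),\Z)$ is free of rank two, generated by $p^*(x)$ and $\xi := c_1(\OO_{\PP(E)}(1))$, subject to the Grothendieck relation $\xi^2 - p^*c_1(E)\,\xi + p^*c_2(E) = 0$. Writing $[\OM] = a\,\xi + b\,p^*(x)$, the coefficients $a,b$ are determined by integrating $\OM$ over a dual basis of $H_2$: the fibre sphere $F$ of $p$ and a sphere $S$ projecting isomorphically onto a line in $\C\PP^2$. Since $\langle \xi, F\rangle = 1$ and $\langle p^*(x), F\rangle = 0$, one gets $a = \int_F \OM$; integrating over $S$, where $\langle p^*(x), S\rangle = 1$, then fixes $b$. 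These symplectic areas are read directly from Tolman's construction, e.g. from the weights of the $\TT^2$-action at the fixed points or equivalently from the edge data of the associated moment graph, and the claim is that they equal $\lambda_1$ and $\lambda_2$ respectively.

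The main obstacle is the bookkeeping in this last step. One must trace the GKZ diffeomorphism carefully enough to identify which class in $H_2(\PP(E),\Z)$ is the fibre and which projects to a line, and to fix every sign and orientation convention so that Tolman's parameters $\lambda_1,\lambda_2$ land exactly on the coefficients of $\xi$ and $p^*(x)$, rather than on some unimodular combination of them. Reconciling Tolman's combinatorial normalisation of the form with the holomorphic normalisation of $\OO_{\PP(E)}(1)$ is where the care is required; once the two distinguished spheres are correctly identified, the equality of their symplectic areas with $\lambda_1$ and $\lambda_2$ should follow directly from the fixed-point weight data.
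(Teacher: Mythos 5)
Your overall architecture (Tolman's forms on $\M$, transport through the GKZ diffeomorphism, then pin down $[\OM]$ by evaluating on spheres) matches the paper's, and your first two steps are fine. The genuine gap is in the step you defer as ``bookkeeping'': determining how the diffeomorphism acts on second (co)homology, i.e.\ which classes in $H_2(\M,\Z)$ correspond to the fibre $F$ and the section $S$ of $\PP(E)$. This cannot be done by ``tracing the GKZ diffeomorphism carefully'': that diffeomorphism is produced by an abstract classification theorem of Wall/Jupp type, so it has no geometric description to trace, and transporting Tolman's form through an isomorphism of $H_2(\PP(E),\Z)$ whose matrix you do not know only identifies $[\OM]$ up to an unknown unimodular change of basis --- exactly the ambiguity you flag but never resolve. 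Concretely, to compute $a=\int_F\OM$ you must know $\Phi_*[F]\in H_2(\M,\Z)$, and nothing in your plan determines it; likewise your evaluation over $S$ involves the unknown quantity $\langle\xi,S\rangle$ in addition to $b$, so it does not ``fix $b$'' by itself.

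Resolving precisely this point is the bulk of the paper's proof. The paper defines classes $\xi',\eta'$ on $\M$ by their values on the nine invariant spheres and checks $[\OM]=\lambda_1\xi'+\lambda_2\eta'$ (Lemma \ref{baseinH2}); it then computes diffeomorphism invariants in this basis --- the cubic intersection form via Duistermaat--Heckman (Proposition \ref{intcube}), and $c_1(\M)=2\xi'+2\eta'$, $c_2(\M)\cdot\xi'=c_2(\M)\cdot\eta'=6$ via localisation (Lemma \ref{othbasis}) --- verifies that these coincide with the corresponding data of $\PP(E)$ in the basis $\xi,\eta$ (Corollary \ref{tolmantop}), and only then invokes Jupp's theorem (Theorem \ref{jupp}), which yields a diffeomorphism inducing the \emph{prescribed} isomorphism $\Phi^*\xi=\xi'$, $\Phi^*\eta=\eta'$ (Theorem \ref{diffeothm}); the identity $\Phi^*(\lambda_1\xi+\lambda_2\eta)=[\OM]$ is then immediate. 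If you insist on citing the GKZ diffeomorphism as a black box, you would instead need a rigidity argument: any automorphism of $H^2(\PP(E),\Z)$ preserving the cubic form $F(a\eta+b\xi)=b(3a^2+3ab+2b^2)$ and the class $c_1=2\xi+2\eta$ is the identity (one checks this using that the integral null directions of $F$ are exactly $\Z\eta$, by Corollary \ref{tolmantop}). But to apply such rigidity you must still compute the cubic form and $c_1$ of $\M$ in the basis $\xi',\eta'$ --- that is, carry out the very computations your proposal omits.
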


Throughout this article we will denote $c_1(\mathcal{O}_{\mathbb{P}(E)}(1))$ by $\xi$ and $p^*(x)$ by $\eta$. We recall that a symplectic form defines an almost complex structure up to homotopy on the manifold, and in Theorem \ref{twosymplectic} we have $c_1(\M, \OM)=2\xi+2\eta$. Figure \ref{fig:TolmanFirst} shows how the form $\OM$ evaluates on the invariant spheres in $\M$.

To put Theorem \ref{twosymplectic} into context, let us recall a few results on  Hamitonian torus actions, where the symplectic category coincides with the K\"ahler one. First, in \cite{De} Delzant proved that any symplectic manifold $(M^{2n},\omega)$ with an effective Hamiltonian $\mathbb T^n$-action admits a compatible $\mathbb T^n$-invariant K\"ahler structure. Next, a theorem of Karshon \cite{Ka} states that all $4$-dimensional Hamiltonian $S^1$-manifolds admit a compatible $S^1$-invariant K\"ahler form. Furthermore, Tolman \cite{To2} and McDuff \cite{Mc1} proved that any six-dimensional Hamiltonian $S^1$-manifold with $b_2=1$ admits a compatible $S^1$-invariant K\"ahler form. We note finally that in a recent series of papers \cite{Cho1, Cho2, Cho3} Cho proved that all compact monotone symplectic $6$-manifolds with a semi-free Hamiltonian circle action admit a compatible invariant K\"ahler form.

In the view of \cite{De, Ka, To2, Mc1} in several respects Tolman's manifold is a minimal possible example of a Hamiltonian $\mathbb T^k$-manifold without a compatible $\mathbb T^k$-invariant K\"ahler form. Note also, that building on the example of Tolman, Woodward constructed an example of a Hamiltonian, mulitiplicity free, non-K\"ahler action \cite{W}.

In the light of papers \cite{To1, GKZ} it is  natural to ask the following questions.

\begin{question} \label{refinedquestion}

\begin{enumerate} Let $\M$ be Tolman's manifold with the symplectic form $\OM$. 

\item[(a)] Does $(\M,\OM)$ admit a K\"ahler form compatible with the symplectic form $\OM$ for some $0<\lambda_1<\lambda_2$?

\item[(b)] Is there a subgroup $S^1\subset \mathbb{T}^2$ for which  Tolman's manifold admits an $S^1$-invariant K\"ahler form compatible with $\OM$ for some $0<\lambda_1<\lambda_2$.
%\item[(c)] Does $\M$ admit a K\"ahler form compatible with some symplectic form $\omega'$ isotopic to $\omega$?
\end{enumerate}

\end{question}

Our first main result gives a partial answer to Question \ref{refinedquestion} (a).
\begin{theorem} \label{newmain} For  $\lambda_{2} \leq 2 \lambda_{1}$ the symplectic form $\OM$ doesn't admit any compatible K\"ahler metric. 
\end{theorem}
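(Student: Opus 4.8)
The plan is to argue by contradiction: assume $\OM$ is the K\"ahler form of some integrable complex structure $J$ on $M:=\M$, and derive a contradiction once $\lambda_2\le 2\lambda_1$. First I would record the numerical data, which is topological and hence independent of $J$. From the Grothendieck relation for $\PP(E)$ together with $c_1(E)=c_2(E)=-1$ one gets $\xi^2=\xi\eta+\eta^2$, whence $\xi^3=2$, $\xi^2\eta=1$, $\xi\eta^2=1$, $\eta^3=0$; in particular $(a\xi+b\eta)^3=a(2a^2+3ab+3b^2)$ with definite quadratic factor, so $\eta$ is up to scale the only class in $H^2(M,\R)$ with vanishing cube, and there is no nonzero class $D$ with $D^2=0$ in $H^4(M,\R)$. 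Since $b_1(M)=0$ and $[\OM]$ lies in $H^{1,1}$, the Hodge decomposition forces $h^{2,0}=0$, so $(M,J)$ is projective with $\rho(M)=b_2(M)=2$, and $-K_M=c_1(M,\OM)=2\xi+2\eta$. As $(-K_M)^3=8(\xi+\eta)^3=64$ and the only Fano threefold with $(-K)^3=64$ is $\PP^3$ (with $\rho=1$), $M$ is not Fano; moreover $K_M^3=-64<0$ shows $K_M$ is not nef.

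The second step is Mori theory. Because $\rho(M)=2$, the cone $\overline{NE}(M)\subset\R^2$ has exactly two extremal rays, and since $K_M$ is not nef one of them, say $R_1$, is $K_M$-negative and gives a Mori contraction $\phi\colon M\to Y$. The vanishing-cube analysis rules out the fibration-over-a-curve case (no divisor class with $D^2=0$) and should also exclude the divisorial cases, so $\phi$ is a fibration onto a smooth surface $S$ with $\rho(S)=1$; the pullback of its hyperplane class is nef of vanishing cube, hence a multiple of $\eta$, and $\int_S H_S^2=\xi\eta^2=1$ identifies $S=\C\PP^2$. Checking all fibres are $\PP^1$'s, $M\cong\PP(E')$ for a rank two bundle $E'$ on $\C\PP^2$; normalising $c_1(E')=-1$ and comparing $(\xi')^2=-c_1(E')\,\xi'\eta-c_2(E')\eta^2$ with $\xi^2=\xi\eta+\eta^2$ forces $\xi'=\xi$, $\eta'=\eta$ and $c_2(E')=-1$, so $E'$ has the same Chern classes as $E$.

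Now the classical theory of bundles on $\C\PP^2$ enters. Since $c_1(E')^2=1>4c_2(E')=-4$, the Bogomolov inequality is violated and $E'$ is unstable; its maximal destabilising sub-line bundle $\OO(a)$ determines a section $\Sigma\subset\PP(E')$, and the lines of $\Sigma$ span the second extremal ray, of class $C_0$ with $\xi\cdot C_0=-a$, $\eta\cdot C_0=1$. The nef cone is then spanned by $\eta$ and $\xi+a\eta$, so $[\OM]=\lambda_1\xi+\lambda_2\eta$ is ample exactly when $\lambda_2>a\lambda_1$. It therefore suffices to prove $a\ge 2$: then $\lambda_2\le 2\lambda_1\le a\lambda_1$ contradicts ampleness of the K\"ahler class.

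Establishing $a\ge 2$ is the main obstacle, and I expect it cannot follow from the cohomological data alone. Indeed there exist rank two bundles on $\C\PP^2$ with $c_1=c_2=-1$ destabilised in degree $a=1$ (for instance a locally free extension of $I_p(-2)$ by $\OO(1)$, where the relevant Cayley--Bacharach condition is vacuous), and their projectivisations are diffeomorphic to $M$ and carry K\"ahler metrics in the class $\lambda_1\xi+\lambda_2\eta$ as soon as $\lambda_2>\lambda_1$. Hence what distinguishes $\OM$ is not its cohomology class but the symplectic form itself, and the hard part is to feed in the Hamiltonian $\TT^2$-action underlying Tolman's construction: its fixed-point and isotropy data single out symplectic spheres whose homology classes, for any $\OM$-compatible $J$, force the class $C_0=(-2,1)$ (equivalently $a\ge 2$) to be represented by an effective curve. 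Since $[\OM]\cdot C_0=\lambda_2-2\lambda_1\le 0$ for $\lambda_2\le 2\lambda_1$, this violates positivity of the K\"ahler form on curves and closes the argument; making precise the passage from the torus symmetry to the effective curve of class $(-2,1)$, rather than the formal Mori-theoretic reduction, is where the real difficulty lies.
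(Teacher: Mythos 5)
Your first two steps run parallel to the paper's proof: projectivity of any K\"ahler structure on $\M$ (the paper's Lemma \ref{btwokahler}) and a Mori-theoretic reduction showing that the complex manifold must be the projectivisation $\PP(E')$ of a holomorphic rank two bundle $E'$ on $\CP^2$ with $c_1(E')=c_2(E')=-1$ (the paper's Theorem \ref{kahlerbundle}, which invokes the Schreieder--Tasin classification for $6$-manifolds with even $c_1$ rather than running the extremal-ray cases by hand; note that your dismissal of the divisorial contractions is asserted, not proved). The genuine gap is your last step, where the argument stops: you claim that the needed effectivity of a curve class $C$ with $\xi\cdot C\le -2$, $\eta\cdot C=1$ ``cannot follow from the cohomological data alone,'' and you defer to an unexecuted analysis of the Hamiltonian torus action. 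As written, this is not a proof of Theorem \ref{newmain}.

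Moreover, the counterexample on which you base this deferral is miscalculated, and the pessimism is unfounded. Take your bundle $E'$, a locally free extension $0\to\OO(1)\to E'\to I_p(-2)\to 0$. Here $\OO(1)$ is a subsheaf but not a sub-bundle of $E'$ (the quotient fails to be locally free at $p$), so it defines only a rational section of $\PP(E')$, and its degree does not compute the Mori cone. Concretely, twisting gives a section of $E'(-1)$ whose zero scheme is exactly the reduced point $p$ (note $c_2(E'(-1))=1$); for any line $L$ through $p$ this section vanishes on $L$ to order one, so $E'|_L\cong\OO(2)\oplus\OO(-3)$, and the negative section $S\subset p^{-1}(L)\cong\mathbb{F}_5$ satisfies $\int_S\xi=-2$, $\int_S\eta=1$. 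Hence $[\OM]\cdot S=\lambda_2-2\lambda_1$, and the ample cone of $\PP(E')$ is $\lambda_2>2\lambda_1$, not $\lambda_2>\lambda_1$ as you claim: your $a=1$ bundle is not a counterexample. In fact the paper proves (Lemma \ref{exactseq} and Corollary \ref{badline}) that such a curve exists for \emph{every} holomorphic bundle with these Chern classes, by pure sheaf theory: $E$ is unstable by Schwarzenberger's criterion since $c_1^2-4c_2=5>0$, so $H^0(E)\neq0$; the minimal twist $k$ with $H^0(E(k))\neq0$ cannot be $0$, since a section of $E$ vanishing in codimension two would have zero scheme of length $c_2(E)=-1<0$; hence $H^0(E(-1))\neq0$, a section of $E(-1)$ has nonempty zero locus, and any line meeting that locus in a finite nonempty set is a jumping line with splitting $\OO(n)\oplus\OO(-1-n)$, $n\ge2$. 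The negative section over such a line is exactly the effective curve you wanted, with no symmetry input needed --- and this is how the paper concludes.
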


We conjecture, however, that in cases $\frac{\lambda_1}{\lambda_2}$ is small enough, the symplectic form $\OM$ on $\M$ does admit a compatible K\"ahler form\footnote{It would be interesting to see whether the K\"ahler form  discussed in Theorem 4.9 from the published version of \cite{GKZ} could give a positive solution to this conjecture.}.  Theorem \ref{newmain} implies that the symplectic form constructed by Tolman in \cite[Lemma 4.1]{To1} does not admit a compatible K\"{a}hler metric, since this symplectic form is $\omega_{1,2}$ in our notation.

Theorem \ref{newmain} has an immediate corollary, that is new according to our knowledge.

\begin{corollary}\label{firstquestion} There exists a compact  symplectic manifold with a Hamiltonian $S^{1}$-action with isolated fixed points that does not admit a compatible K\"ahler metric.
\end{corollary}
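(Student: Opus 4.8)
The plan is to extract the corollary directly from Theorem \ref{newmain} by restricting the Hamiltonian $\TT^2$-action of Theorem \ref{twosymplectic} to a carefully chosen circle subgroup. First I would fix a pair $0<\lambda_1<\lambda_2$ satisfying $\lambda_2\le 2\lambda_1$, for instance Tolman's original values giving $\OM=\omega_{1,2}$. Theorem \ref{newmain} then guarantees that the compact symplectic manifold $(\M,\OM)$ admits no compatible K\"ahler metric whatsoever --- crucially, the non-existence is unconditional and not merely restricted to $\TT^2$-invariant metrics --- so the only remaining task is to produce a Hamiltonian $S^1$-action with isolated fixed points on this same symplectic manifold.

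For this I would use that Tolman's $\TT^2$-action has isolated fixed points; this is a built-in feature of the construction recalled in Section \ref{sec:tolconstruction}. Writing $\mathfrak{t}=\mathrm{Lie}(\TT^2)$, at each of the finitely many fixed points $p$ the isotropy representation on the three-complex-dimensional tangent space $T_p\M$ determines three weights $\alpha_{p,1},\alpha_{p,2},\alpha_{p,3}\in\mathfrak{t}^*$. I would then choose a primitive lattice vector $v\in\mathfrak{t}$ that avoids the kernels of all of these finitely many weights, i.e. $\alpha_{p,j}(v)\ne 0$ for every fixed point $p$ and every $j$; such $v$ exists because one discards only finitely many hyperplanes from the integral lattice. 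The subcircle $S^1\subset\TT^2$ generated by $v$ then has fixed point set exactly equal to the $\TT^2$-fixed point set: at any point not fixed by $\TT^2$ some weight of the $\TT^2$-representation on the normal directions is nonzero, hence nonzero on $v$, so $S^1$ cannot fix that point either. Thus the $S^1$-fixed points are isolated.

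It remains to observe that this $S^1$-action is Hamiltonian. If $\Phi:\M\to\mathfrak{t}^*$ denotes the $\TT^2$-moment map of Theorem \ref{twosymplectic}, then the composition $\langle v,\Phi(\cdot)\rangle:\M\to\R$, dual to the inclusion $\mathrm{Lie}(S^1)\hookrightarrow\mathfrak{t}$, is a moment map for the circle action generated by $v$. Hence $(\M,\OM)$ is a compact symplectic manifold carrying a Hamiltonian $S^1$-action with isolated fixed points and no compatible K\"ahler metric, which is precisely the statement of Corollary \ref{firstquestion}.

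I expect no genuine obstacle here: the entire substantive content is supplied by Theorem \ref{newmain}, and the reduction from the torus to a circle is routine once one records that Tolman's fixed points are isolated. The only point requiring a line of care is the genericity argument ensuring that passage from $\TT^2$ to the subcircle $S^1$ does not enlarge the fixed locus.
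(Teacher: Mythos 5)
Your proposal is correct and follows essentially the same route as the paper: restrict the Hamiltonian $\mathbb{T}^2$-action to a circle subgroup whose fixed points remain isolated and invoke Theorem \ref{newmain} for some $\lambda_2\le 2\lambda_1$ (e.g.\ $\omega_{1,2}$). The only difference is that the paper picks an explicit subcircle (the one with $a=2$, $b=1$, whose weights in Figure \ref{figure} are visibly all nonzero) rather than your genericity argument, which is a cosmetic distinction.
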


We note that in Theorem \ref{newmain} and Corollary \ref{firstquestion} we do not assume that the compatible K\"{a}hler metric is preserved by any symmetry.

{\it Coprime actions.} To state our second main result, we introduce the following terminology. We say that an $S^1$-action on a symplectic manifold is {\it coprime} if at any fixed point $p$ the weights of the $S^1$-action at $p$ are {\it pairwise coprime} and 
no weight of the action is equal to either $-1,\, 0,\,1$. According to this definition, a coprime action has isolated fixed points. It turns out, that for a subgroup $S^1\subset \mathbb T^2$ the action on $\M$ is coprime if an only if it doesn't have weights $-1,0,1$ at any point. Furthermore, for a generic subgroup $S^1\subset \mathbb T^2$ the action is coprime. Hence, the following result gives an almost complete answer to Question \ref{refinedquestion} (b).

\begin{theorem}\label{main}
Let $(\M,\OM)$ be Tolman's manifold, and let $S^1\subset \mathbb T^2$ be a subgroup whose action on $\M$ is coprime.  Then there is no $S^1$-invariant K\"ahler metric on $\M$ that is compatible with $\OM$. 
%Furthermore,  no such compatible metric exists for any $S^1$-equivariant deformation of $\OM$ on $\M$.
\end{theorem}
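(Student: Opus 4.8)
The plan is to argue by contradiction. Suppose $J$ is an $S^1$-invariant integrable complex structure compatible with $\OM$; then $(\M,J,\OM)$ is a compact K\"ahler threefold on which the Hamiltonian circle acts holomorphically, and its complexification is an effective holomorphic $\C^*$-action. As $\OM$ determines the homotopy class of compatible almost complex structures, $c_1(\M,J)=2\xi+2\eta$, so $-K_{(\M,J)}=2\xi+2\eta$. Because $S^1\subset\TT^2$ is coprime, its fixed locus equals $\mathrm{Fix}(\TT^2)$, six isolated points, and the isotropy weights of $S^1$ there are the restrictions $\langle w,v\rangle$ of Tolman's $\TT^2$-weights $w$ to the primitive direction $v$ of $S^1$. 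This list of six weighted fixed points is a symplectic invariant, so it is known explicitly; by hypothesis none of its weights lies in $\{-1,0,1\}$.

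First I would determine the complex geometry of $(\M,J)$ by Mori theory. The Picard number is two, so the Mori cone has two extremal rays. Using $-K_{(\M,J)}=2(\xi+\eta)$ together with the Grothendieck relation and $\eta^3=0$, I would analyse the two extremal contractions and show that one of them is a $\CP^1$-fibration $p\colon\M\to\CP^2$, identifying $(\M,J)=\PP(\mathcal E)$ for a holomorphic rank-two bundle $\mathcal E$ on $\CP^2$ with $c_2(\mathcal E)=-1$. The subtle point is to exclude the remaining contraction types---small or divisorial contractions, and Mori fibrations over a curve or a point---purely from the numerical data, so that the fibration over $\CP^2$ is forced; being the unique contraction of its kind it is preserved by $\mathrm{Aut}^0(\M,J)$, and the holomorphic $\C^*$ then descends to a subgroup of $\mathrm{PGL}_3$ acting on the base.

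Next I would use classical facts about rank-two bundles on $\CP^2$. Since $c_2(\mathcal E)=-1$ violates the Bogomolov inequality $4c_2\ge c_1^2$, the bundle $\mathcal E$ is unstable, and its Harder--Narasimhan filtration yields a canonical destabilising sub-line-bundle $\OO(a)\hookrightarrow\mathcal E$ with $a\ge 1$ and quotient $\OO(b)\otimes I_Z$, where the zero-dimensional scheme $Z$ is nonempty. This sub-line-bundle is canonical, hence $\C^*$-invariant, and gives a $\C^*$-invariant section $\Sigma\cong\CP^2\hookrightarrow\M$, while $Z$ is an invariant subscheme supported at base fixed points. Over each base fixed point $q_i$ I would record the vertical weight $\beta_i-\alpha_i$ of the $\C^*$-linearisation on $\mathcal E_{q_i}$ and the horizontal weights $\mu_j-\mu_i$; along $\Sigma$ the vertical weights obey the affine law $\beta_i-\alpha_i=w_0+d\,\mu_i$ imposed by the equivariant normal bundle $N_\Sigma=\OO(d)$.

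The main obstacle, and the place where coprimeness is decisive, is to match this holomorphically constrained weight pattern against Tolman's explicit $S^1$-weights at the six fixed points. The instability forces the jumping scheme $Z$ to lie over a base fixed point, and a local analysis of $\mathcal E$ there, combined with the affine constraint on the vertical weights and the integrality of all weights, should force one of the six fixed points to carry a weight in $\{-1,0,1\}$, contradicting coprimeness. I expect the two genuinely hard steps to be the Mori-theoretic identification of $(\M,J)$ from the homotopy data alone and this final weight bookkeeping; the intervening bundle theory and the descent of the action are comparatively standard.
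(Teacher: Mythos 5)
Your overall frame---contradiction, complexification to a holomorphic $\C^*$-action, Mori-theoretic identification of $(\M,J)$ with $\PP(\mathcal E)$ for a rank-two bundle with $c_1=-1$, $c_2=-1$, then exploiting instability of $\mathcal E$---matches the paper's architecture. But there are two genuine gaps in the second half, where the actual contradiction is supposed to appear. First, your claim that the destabilising sub-line-bundle $\OO(a)\hookrightarrow\mathcal E$ ``gives a $\C^*$-invariant section $\Sigma\cong\CP^2\hookrightarrow\M$'' with normal bundle $\OO(d)$ is false: since the Chern polynomial $1-x-x^2$ of $\mathcal E$ has no rational roots, $\mathcal E$ admits no line \emph{sub-bundles} (Lemma \ref{exactseq}), so the quotient of any destabilising subsheaf necessarily has nonempty vanishing scheme $Z$, and the induced section of $\PP(\mathcal E)\to\CP^2$ is only rational---defined off $Z$, with closure a surface birational to $\CP^2$ (essentially a blow-up along $Z$), not a section. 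The ``affine law'' $\beta_i-\alpha_i=w_0+d\mu_i$ you want to impose on vertical weights therefore rests on a structure that does not exist, precisely because $Z\ne\emptyset$. Second, the concluding step---that integrality plus these constraints ``should force'' a weight in $\{-1,0,1\}$ at some fixed point---is exactly the heart of the theorem, and you leave it as an expectation rather than an argument; nothing you have set up visibly produces such a weight.

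For comparison, the paper extracts the contradiction from a different invariant object: not the destabilising subsheaf itself, but the \emph{jumping lines}. The set $Y\subset\CP{}^{2*}$ of lines $L$ with $E|_L\cong\OO(n)\oplus\OO(-1-n)$, $n\ge 2$, is nonempty (Corollary \ref{badline}) and Zariski closed, and it is invariant under the $\C^*$-action induced on the base; hence it contains a fixed point, i.e.\ there is a $\C^*$-invariant jumping line $L$. The negative section $S$ of the Hirzebruch surface $p^{-1}(L)$ is then a $\C^*$-invariant smooth rational curve with $\int_S c_1(T\PP(E))\le -2$ (Proposition \ref{badinvariantS}). Coprimeness enters not through weight bookkeeping but geometrically: for a coprime action every smooth invariant curve is an isotropy sphere (Lemma \ref{coprimecurve}), and the isotropy spheres of a coprime action on $\M$ are exactly the nine $\TT^2$-invariant spheres, on all of which $c_1(\M)\ge 0$ (Lemma \ref{4free}). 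That is the contradiction. If you want to salvage your approach, you would need either to carry out the local equivariant analysis at $Z$ in full (which the paper deliberately avoids), or to redirect your invariance argument from the subsheaf to the jumping-line locus as above.
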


We believe that one can remove the condition on $S^1$-action to be coprime, and the exactly same statement holds for any subgroup $S^1\subset \mathbb T^2$. 

Let us now explain how  Theorems \ref{twosymplectic}, \ref{newmain} and \ref{main} are proven. As for Theorem \ref{twosymplectic}, using the existence of symplectic forms $\OM$ on $\M$ together with the Duistermaat-Heckman formula, one calculates the cubic intersection form on $H^2(\M,\mathbb Z)$.  Using further localisation one calculates $p_1(\M)$ and $w_2(\M)$. This permits one to apply a theorem of Jupp to get a diffeomorphism between $\M$ and the projectivization of a rank two bundle $E$ over $\mathbb CP^2$. Our proof is similar to \cite{GKZ}, but the approach to calculating the invariants is different. As for the cohomology classes of  the forms $\OM$, they are calculated by evaluating them at the collection of $9$ spheres in $\M$ which are $\mathbb T^2$-invariant.

The proof of Theorem \ref{newmain} uses  Mori theory  together with classical results on holomorphic rank two bundles over $\mathbb CP^2$. Before explaining it, we need to fix some terminology and state an additional result. We will say that two almost complex manifolds $(M,J)$ and $(M',J')$ are {\it almost complex equivalent} if there is a diffeomorphism $\varphi: M\to M'$ such that the almost complex structures $\varphi(J)$ and $J'$ are homotopic on $M'$. Let $J_{\cal T}$ be any almost complex structure on $\M$ that tames $\OM$. Theorem \ref{twosymplectic} states among other things that Tolman's manifold $(\M, J_{\cal T})$ is almost complex equivalent  to the projectivisation of a complex rank two bundle $E$ with $c_1(E)=-1$, $c_2(E)=-1$. The next theorem shows that there is control on K\"ahler metrics of such almost complex manifolds. 

\begin{theorem}\label{kahlerbundle}
Let $(M',J')$ be a compact three-dimensional K\"ahler manifold that is almost complex equivalent to the projectivisation $\mathbb{P}(V)$ of a holomorphic rank two bundle $V$ over $\mathbb CP^2$ with $c_1(V)=-1$. Suppose $4c_2(V)-c_1^2(V)\le 27$. Then $(M', J')$ is biholomorphic to the projecitivisation of a holomorphic bundle $V'$ with $c(V')=c(V)$.
\end{theorem}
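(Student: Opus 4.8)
The plan is to show that $M'$ is a smooth projective threefold carrying a conic bundle structure over $\CP^2$, and then to rule out degenerate fibres so that the bundle is a genuine $\PP^1$-bundle. First I would establish projectivity: since $b_2(M')=2$ and $M'$ is Kähler, the Hodge decomposition $b_2=2h^{2,0}+h^{1,1}$ with $h^{1,1}\ge 1$ forces $h^{2,0}(M')=0$, so by Kodaira embedding $M'$ is projective. The almost complex equivalence with $\PP(V)$ fixes the topological data I shall use: $b_1=b_3=0$, $b_2=b_4=2$, $\chi(M')=6$, $\pi_1(M')=1$, and $-K_{M'}=c_1(M')=2\xi+2\eta$ (using $c_1(V)=-1$; here $\xi,\eta$ are the transported generators of $H^2$). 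Next I would argue that $M'$ is uniruled: the genus-zero Gromov--Witten invariant of $\PP(V)$ in the fibre class is nonzero, and since it depends only on the deformation class of tamed almost complex structures it is shared by $(M',J')$; as $M'$ is projective, symplectic uniruledness yields algebraic uniruledness, so $K_{M'}$ is not pseudo-effective.

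I would then run a $K_{M'}$-minimal model program. Because $b_2(M')=2$, the cone $\overline{NE}(M')$ has exactly two extremal rays, and uniruledness guarantees termination in a Mori fibre space $\phi\colon M''\to Y$; each divisorial contraction drops $b_2$ by one (flips preserve it), so at most one divisorial contraction occurs. I would analyse $\phi$ by $\dim Y$ together with the topology above. The case $\dim Y=0$ is excluded by $b_2=2$. The case $\dim Y=1$ forces $Y=\PP^1$ with relative Picard number one, i.e.\ a $\PP^2$- or quadric fibration; this is ruled out because its cubic form carries a nonzero degree-two class squaring to zero in $H^4$, whereas for $\PP(V)$ the equations $-a^2+2ab=0$, $-c_2 a^2+b^2=0$ have only the trivial solution. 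Hence $\dim Y=2$, and $\phi$ is a conic bundle over a smooth surface $Y$ with $b_2(Y)=1$; simple connectivity of $M'$ forces $\pi_1(Y)=1$, and a smooth projective surface with $b_2=1$ and $\pi_1=1$ is $\CP^2$. An argument with the intersection form should further show no divisorial contraction occurs, so that $\phi$ is already defined on $M'$ itself and realises $\eta$ as the pullback of the hyperplane class of $\CP^2$.

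It then remains to prove that $\phi\colon M'\to\CP^2$ has empty discriminant, so that it is a $\PP^1$-bundle $\PP(V')$. The discriminant $\Delta\subset\CP^2$ is the locus over which fibres degenerate into line-pairs. Since $b_2(M')=2$ the two rulings cannot globally split, so the double cover of $\Delta$ is connected; a nonempty $\Delta$ of positive arithmetic genus would then contribute a Prym piece to $H^3(M')$, contradicting $b_3(M')=0$, while a genus-zero component is excluded by the Euler identity $\chi(M')=2\chi(\CP^2)+\chi(\Delta)=6$. Making this rigorous requires the conic-bundle discriminant formula expressing $\deg\Delta$ through the Chern numbers of $M'$, and it is precisely here that the hypothesis $4c_2(V)-c_1^2(V)\le 27$ enters, a nonempty discriminant forcing $4c_2(V)-c_1^2(V)$ past this threshold. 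I expect this discriminant elimination to be the main obstacle, both because the numerical accounting at the value $27$ is delicate and because one must control non-reduced and singular discriminants. Once $\Delta=\varnothing$, the morphism $\phi$ is a smooth $\PP^1$-fibration, hence the projectivisation of a rank-two bundle $V'$; the classical fact that topological rank-two bundles on $\CP^2$ are algebraic, combined with the equality of the cubic forms of $\PP(V')$ and $\PP(V)$, identifies the Chern data and gives $c(V')=c(V)$ after normalising $c_1=-1$.
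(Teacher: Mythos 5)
Your skeleton --- projectivity from $b_2=2$ and the Hodge decomposition, Mori theory, a case analysis by the dimension of the target with the quadric-bundle case killed by the cubic form and the base of the conic bundle identified as $\mathbb{CP}^2$ via $\pi_1$ and $b_2$ --- is essentially the paper's, which after the same projectivity lemma invokes the Schreieder--Tasin structure theorem \cite[Theorem 10]{ST} for spin K\"ahler threefolds with non-nef canonical class. But your entry point into Mori theory has a genuine gap. Gromov--Witten invariants are invariants of the symplectic deformation class; they are not invariants of the data you actually have, namely a diffeomorphism plus a homotopy of almost complex structures (such a homotopy need not pass through structures tamed by any family of symplectic forms). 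Whether uniruledness is preserved under diffeomorphisms of this kind is a well-known open problem, so the assertion that the fibre-class invariant ``depends only on the deformation class of tamed almost complex structures'' and hence transfers to $(M',J')$ cannot be justified. The paper obtains what is needed purely topologically: Chern numbers \emph{are} invariants of almost complex equivalence, so by Lemma \ref{topbundle} the hypothesis $4c_2(V)-c_1^2(V)\le 27$ gives $c_1^3(M')=2\bigl(27+c_1^2(V)-4c_2(V)\bigr)>0$, whence $K_{M'}$ is not nef and a Mori contraction exists. In particular, the numerical hypothesis enters exactly here --- not, as you guessed, in the discriminant elimination.

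The second gap is the step you yourself flag as the main obstacle, and as proposed it would fail. Both invariants you propose are blind to low-degree discriminants: for a standard conic bundle over $\mathbb{CP}^2$ whose discriminant $\Delta$ is a smooth cubic, $h^{1,2}$ equals the dimension of the Prym variety of a connected \'etale double cover of the genus-one curve $\Delta$, which is zero, so $b_3=0$; and your own Euler identity reads $\chi=2\chi(\mathbb{CP}^2)+\chi(\Delta)=6+0=6$. A smooth divisor of bidegree $(1,2)$ in $\mathbb{CP}^2\times\mathbb{CP}^2$ is exactly such a threefold: it passes both of your tests and is not a $\mathbb{CP}^1$-bundle. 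What actually empties the discriminant --- and this is precisely what the paper extracts from \cite{ST} --- is the spin condition, which you never use: since $c_1(V)=-1$ is odd, $c_1(M',J')$ is divisible by $2$ by Lemma \ref{topbundle} 3) --- a statement depending only on $w_2$, hence only on the homotopy class of $J'$ --- while a line component $\ell$ of a degenerate conic fibre would satisfy $c_1\cdot\ell=1$, which is odd. This one-line parity argument kills all discriminants at once, independently of the numerical hypothesis, and it likewise forces any blow-down to be a blow-down to a smooth point and any fibration over a curve to be a quadric bundle. Finally, your hope to ``show no divisorial contraction occurs'' cannot be realized in general: when $c_2(V)=0$ the manifold $\mathbb{P}(\mathcal{O}\oplus\mathcal{O}(-1))$ is the blow-up of $\mathbb{CP}^3$ at a point, so a divisorial contraction genuinely exists; the paper does not exclude this case but resolves it, showing via Lemma \ref{topbundle} 4) that then $c_2(V)\le 0$ and the contracted threefold is a Picard-rank-one Fano with $c_1^3=64-8c_2(V)\ge 64$, hence $\mathbb{CP}^3$, which is consistent with the conclusion of the theorem.
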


It is worth to note here that $c_1^3(\mathbb{P}(V))=2(27+c_1^2(V)-4c_2(V))$. In particular, the quantity $4c_2(V)-c_1^2(V)$ doesn't change when we tensor $V$ by a line bundle. In particular, this result holds as well for all $V$ with $c_1(V)$ odd. This theorem has the following corollary.

\begin{corollary}\label{kahlertype}
Any K\"ahler manifold $(M',J')$ almost complex equivalent to Tolman's manifold as an almost complex manifold is biholomorphic to the projectivisation of a holomorphic rank two bundle $E$ over $\mathbb CP^2$ with $c_1(E)=-1$, $c_2(E)=-1$.
\end{corollary}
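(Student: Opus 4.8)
The plan is to deduce the corollary directly from Theorem \ref{kahlerbundle}, after observing that Tolman's almost complex manifold is, by Theorem \ref{twosymplectic}, of the special form to which that theorem applies, and that the relevant numerical hypothesis holds automatically. First I would record that almost complex equivalence is an equivalence relation: it is reflexive, symmetric (via the inverse diffeomorphism), and transitive (via composition of diffeomorphisms, together with the fact that a homotopy of almost complex structures is carried to a homotopy under pushforward by a diffeomorphism). By Theorem \ref{twosymplectic}, Tolman's manifold $(\M, J_{\cal T})$, where $J_{\cal T}$ is any almost complex structure taming $\OM$, is almost complex equivalent to the projectivisation $\mathbb{P}(E)$ of a holomorphic rank two bundle $E$ over $\mathbb{CP}^2$ with $c_1(E)=-1$ and $c_2(E)=-1$. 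Hence, by transitivity, any K\"ahler manifold $(M',J')$ that is almost complex equivalent to Tolman's manifold is also almost complex equivalent to $\mathbb{P}(E)$.

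Next I would verify the numerical hypothesis of Theorem \ref{kahlerbundle} for $V=E$. Writing $c_1(E)=-x$, one has $c_1^2(E)=x^2=1$ and $c_2(E)=-1$, so that $4c_2(E)-c_1^2(E)=-4-1=-5\le 27$. Thus the hypotheses $c_1(V)=-1$ and $4c_2(V)-c_1^2(V)\le 27$ are both satisfied, and Theorem \ref{kahlerbundle} applies verbatim. It yields that $(M',J')$ is biholomorphic to the projectivisation $\mathbb{P}(V')$ of a holomorphic rank two bundle $V'$ with $c(V')=c(E)$, i.e. $c_1(V')=-1$ and $c_2(V')=-1$. Renaming $V'$ as $E$ gives the statement of the corollary.

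I do not expect any genuine obstacle: essentially all the work is carried by Theorem \ref{kahlerbundle}, and what remains is the formal transitivity of almost complex equivalence together with the trivial inequality $-5\le 27$. Since the Chern numbers of Tolman's bundle are small and negative, the bound $4c_2(V)-c_1^2(V)\le 27$ holds with a wide margin, so the corollary sits comfortably inside the range of validity of the theorem. The one point deserving a word of care is the transitivity step: one must confirm that the almost complex structure implicit in the phrase \emph{almost complex equivalent to Tolman's manifold} is exactly the one furnished by Theorem \ref{twosymplectic}. This is guaranteed by the remark following that theorem, namely that a symplectic form determines a taming almost complex structure up to homotopy, so the equivalence class of $(\M,J_{\cal T})$ is well defined and independent of the choice of $J_{\cal T}$.
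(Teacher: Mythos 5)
Your proof is correct and takes essentially the same route the paper intends: the corollary is stated there without a separate proof precisely because it follows, as you argue, from Theorem \ref{kahlerbundle} applied to the bundle $E$ with $c_1(E)=c_2(E)=-1$ furnished by Theorems \ref{twosymplectic} and \ref{diffeothm}, together with the trivial check $4c_2(E)-c_1^2(E)=-5\le 27$ (this is exactly the chain executed in the paper's proof of Theorem \ref{newmain}). Your remarks on transitivity of almost complex equivalence and on the well-definedness of the taming structure $J_{\cal T}$ up to homotopy are just the glue the paper leaves implicit.
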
 

Thus, to prove Theorem \ref{newmain} we only need to find an appropriate restriction on the K\"ahler cones of projectivisations  of holomorphic bundles $E$ with $c_1(E)=-1$, $c_2(E)=-1$. This is done by exhibiting rational curves on which the form $\OM$ evaluates as $\lambda_2 - 2\lambda_1$ which gives the inequality $\lambda_2<2\lambda_1$.

As for Theorem \ref{main}, its proof is by contradiction in three steps. We assume first that $(\M,\omega)$ admits a compatible K\"ahler metric $g$. Then by Corollary \ref{kahlertype} $\M$ is biholomorphic to the projectivisation of a holomorphic rank two bundle $E$ over $\mathbb CP^2$ with $c_1(E)=-1$, $c_2(E)=-1$. Next we show that in the case that $g$ is invariant under an $S^1$-action as in Theorem \ref{main}, 
there is a smooth $S^1$-invariant rational curve $S$ in $\M$ on which $c_1(\M)$ evaluates negatively. This leads us to a contradiction.

%this manifold should have a nef anti-canonical bundle. However, one checks that $c_1(\M)$ evaluate negatively at one rational curve in $\M$, and this gives us a contradiction.

{\bf Acknowledgements.} We would like to thank Valeri Alexeev, Michel Brion, Yoshinori Hashimoto, Maxim Kazarian, John Klein, Silvia Sabatini,  Nick Shepherd-Barron, Constantin Shramov,  Andrew Sommese, Jakub Witaszek for their help. We are particularly grateful to Sasha Kuznetsov for answering numerous questions on holomorphic bundles over $\mathbb CP^2$.  

We would also like to thank Oliver Goertsches, Panagiotis Konstantis and Leopold Zoller for comments on the first version of this paper. We are grateful to the
referee for careful reading and very helpful suggestions improving the presentation.

\section{Preliminary results}

In this section we build up some results about Tolman's manfiold. After setting notations in Section \ref{sec:perliminary}, we recall the construction of Tolman's manifold in Section \ref{sec:tolconstruction}. In Section \ref{circle} we specify the weights of $\mathbb T^2$-action on $\M$, which permits one to calculate the Chern classes and Chern numbers of $\M$. Further, in Section \ref{sec:basic} we single out integer bases of $H_2(\M,\mathbb Z)$ and $H^2(\M,\mathbb Z)$. Finally, in Section \ref{sec:bundles} we recall some basic facts on the topology of projective bundles over $\mathbb CP^2$.

\subsection{Basic facts on Hamiltonian circle actions}\label{sec:perliminary}

Suppose that $(M^{2n},\omega)$ is a compact symplectic manifold. Recall that $(M,\omega)$ has a compatible almost complex structure and the space of such is contractible. Hence, the Chern classes of $TM$ are well defined, we refer to them simply as $c_{i}(M) \in H^{2i}(M,\mathbb{Z})$. 

Now, we recall some definitions and results in the theory of Hamiltonian $S^{1}$-actions (the reader may consult \cite[Section 5]{MS} for more background).

\begin{definition}\normalfont
Suppose $(M,\omega)$ is a $2n$-dimensional symplectic manifold with an effective Hamiltonian $S^{1}$-action.  Let $p$ be a fixed point with weights $\{w_{1},\ldots ,w_{n}\}$. Then the index of $p$ is $$ind(p)=2\cdot \#\{w_{i}:w_{i}<0\}.$$
\end{definition} 

The index is also equal to the index of $H$ as a Morse-Bott function on the critical submanifold containing $p$. The following definition is also standard. 

\begin{definition} \normalfont
Suppose $(M,\omega)$ has an effective Hamiltonian $S^{1}$-action.  

\begin{itemize}

\item Let $M^{S^{1}} \subset M$ denote the set of points fixed by $S^{1}$.

\item  For each integer $k \geq 2$ consider the subgroup $\mathbb{Z}_{k}  \subset S^{1}$ generated by $e^{\frac{2 \pi i}{k}}$. Then define $M^{\mathbb{Z}_{k}}$ to be the set of points in $M$ whose stabiliser contains $\mathbb{Z}_{k}$. 

%\item A connected component of $M^{\mathbb{Z}_{k}}$ which is not contained in $M^{\mathbb{Z}_{m}}$ for any $m>k$ is called an isotropy submanifold. We say that this isotropy submanifold has weight $k$.
\item A connected component of $M^{\mathbb{Z}_{k}}$ which is not contained in $M^{S^{1}}$ is called an \emph{isotropy submanifold}. We say that this isotropy submanifold has weight $k$ the stabiliser of its generic point is $\mathbb Z_k$.

\item A $2$-dimensional isotropy submanifold is called an isotropy sphere.

\end{itemize}
\end{definition}

Recall that $M^{\mathbb{Z}_{k}}$ and $M^{S^{1}}$ are possibly disconnected symplectic submanifolds invariant by the $S^{1}$-action \cite[Lemma 5.53]{MS}. Note that the term isotropy sphere is justified: any isotropy submanifold inherits a non-trivial Hamiltonian $S^{1}$-action from $M$, hence if $2$-dimensional it is  diffeomorphic to a $2$-sphere. 

Lastly, we give a standard localisation result for reference.

\begin{lemma}\label{c1formula} Let $M$ be a Hamiltonian $S^1$-manifold and $S\subset M$ be an invariant sphere, which is not point-wise fixed by $S^1$. Let $S_{\max},S_{\min}$ be the fixed points on $S$ where the Hamiltonian attains its maximum and minimum respectively. Denote by $w_i$ the weights at $S_{\max}$ and by $w_i'$ the weights at $S_{\min}$. Then 

$$\langle c_1(M), S\rangle=\frac{\sum w_i'-\sum w_i}{w(S)},$$

where $w(s)$ denotes the order of the stabiliser of a non-fixed point in $S$.
\end{lemma}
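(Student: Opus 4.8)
The plan is to interpret $\langle c_1(M),S\rangle$ as the degree of a complex line bundle over the invariant sphere $S\cong\C\PP^1$ and to compute that degree by equivariant localisation. Since $S$ is an invariant symplectic submanifold, the restriction $TM|_S$ is an $S^1$-equivariant complex vector bundle, and its determinant line bundle $L:=\det(TM|_S)=\bigwedge^{\mathrm{top}}TM|_S$ satisfies $c_1(L)=c_1(M)|_S$, so that $\langle c_1(M),S\rangle=\langle c_1(L),[S]\rangle=\deg L$. Thus it suffices to compute $\deg L$ together with the weights by which $S^1$ acts on the fibres $L_{S_{\max}}$ and $L_{S_{\min}}$; by the defining property of the determinant these fibre weights are exactly $\sum_i w_i$ and $\sum_i w_i'$ respectively.

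First I would record the local structure of the action on $S$. The induced $S^1$-action on $S$ has generic stabiliser $\Z_{w(S)}$ and precisely two fixed points $S_{\max},S_{\min}$, so the isotropy representations on $T_{S_{\max}}S$ and $T_{S_{\min}}S$ have weights $\pm w(S)$. The sign is pinned down by the moment map: since $H|_S$ attains its minimum at $S_{\min}$ and its maximum at $S_{\max}$, and the sign of the weight of a tangent direction equals the sign of the corresponding Hessian eigenvalue of $H$, the weight on $T_{S_{\min}}S$ is $+w(S)$ and that on $T_{S_{\max}}S$ is $-w(S)$.

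Next I would apply the Atiyah--Bott--Berline--Vergne localisation formula to the equivariant first Chern class $c_1^{S^1}(L)\in H^2_{S^1}(S)$. Integrating over the two-dimensional $S$ lowers equivariant degree by two and returns the ordinary degree $\deg L$, while the right-hand side is a sum of contributions at the two fixed points. With $u$ the generator of $H^*_{S^1}(\mathrm{pt})$, the restriction of $c_1^{S^1}(L)$ at a fixed point is $(\text{fibre weight})\cdot u$ and the equivariant Euler class of the tangent line is $(\text{tangent weight})\cdot u$, so localisation yields
\begin{equation*}
\deg L=\frac{\sum_i w_i'}{w(S)}+\frac{\sum_i w_i}{-w(S)}=\frac{\sum_i w_i'-\sum_i w_i}{w(S)},
\end{equation*}
which is the claimed identity. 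Equivalently, one may bypass localisation by verifying the formula first for $L=\OO(d)$ with its standard linearisation and then using that an $S^1$-equivariant line bundle on $\C\PP^1$ is determined up to isomorphism by its degree and a single fibre weight.

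The routine ingredients are the existence of the equivariant determinant line bundle and the localisation computation itself. The one genuinely delicate point is the sign bookkeeping: one must ensure that the convention relating the moment map to the weights (the choice of sign in $dH=\iota_X\omega$) is applied consistently, so that the tangent weight at $S_{\min}$ comes out as $+w(S)$ rather than $-w(S)$, since the opposite choice would flip the overall sign of the formula.
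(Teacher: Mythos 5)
Your proposal is correct and takes the same route as the paper: the paper's proof consists of invoking ABBV localisation (citing \cite[Lemma 2.15]{LP}), which is precisely the computation you carry out explicitly, applying localisation to the equivariant determinant line bundle $\det(TM|_S)$ with fibre weights $\sum w_i$, $\sum w_i'$ and tangent weights $\mp w(S)$. Your careful treatment of the sign convention (tangent weight $+w(S)$ at $S_{\min}$, pinned down by the Hessian of the Hamiltonian) is exactly the bookkeeping hidden in the cited reference, so the only difference is that you supply the details the paper outsources.
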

\begin{proof}
This follows from ABBV localisation, in particular see \cite[Lemma 2.15]{LP}.
\end{proof}

\subsection{Tolman's manifold}\label{sec:tolconstruction}

In this section we recall the construction of Tolman's manifold $\M$ and state some of its simple properties. The construction is given in \cite[Lemma 4.1]{To1} where Tolman describes a symplectic $6$-manifold $(\M,\omega)$ with a Hamiltonian $\mathbb T^2$-action. We will state this result in a slightly more detailed form, in particular, we will describe a two-parameter family $\OM$ of symplectic forms on $\M$. The symplectic form $\omega$ described in \cite[Lemma 4.1]{To1} is $\omega_{1,2}$ in our notation.

\begin{theorem}[Tolman \cite{To1}]\label{toltheorem} Let $\lambda_1,\lambda_2$  satisfy $0<\lambda_1<\lambda_2$. Then there exists a $6$-dimensional symplectic manifold $(\M,\OM)$ with an effective, Hamiltonian $\mathbb{T}^2$-action, with $6$ isolated fixed points and $9$ invariant symplectic spheres, such that the moment map $\phi: \M\to \mathbb R^2$ has the following properties. 

1) The moment map sends $\M^{S^1}$ to points $(0,0),(0,\lambda_{1}+\lambda_{2}),(\lambda_1,\lambda_1+\lambda_2),(2\lambda_1+\lambda_2,0),(\lambda_2,\lambda_1),(\lambda_1,\lambda_1)$.

2) The collection of nine invariant spheres is sent by $\phi$ to the graph depicted on Figure \ref{fig:TolmanFirst}.
\end{theorem}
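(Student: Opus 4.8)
The plan is to construct $\M$ directly as a complexity-one Hamiltonian $\mathbb T^2$-space, recovering the combinatorial data of Figure \ref{fig:TolmanFirst} and then producing a genuine symplectic form realizing it. First I would record, at each of the six prospective fixed points, the local weight data dictated by the picture: the three isotropy weights at a fixed point are read off from the primitive integer directions of the three invariant spheres emanating from its image, together with the speed at which the moment map traverses each edge, which is fixed by the prescribed endpoint images and by $\lambda_1,\lambda_2$. Before anything can be built one must check that these local models are mutually consistent, namely that the weights attached to the two ends of each of the nine edges agree up to sign, that they are primitive so the spheres are smooth with the stated stabilisers, and that the resulting abstract GKM graph satisfies the axioms forcing it to arise from a manifold. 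Note that $A=(0,0)$, $B=(0,\lambda_1+\lambda_2)$, $C=(\lambda_1,\lambda_1+\lambda_2)$, $D=(2\lambda_1+\lambda_2,0)$ lie on the boundary of the moment image while $E=(\lambda_2,\lambda_1)$ and $F=(\lambda_1,\lambda_1)$ lie in its interior; the presence of interior fixed points is precisely the hallmark of complexity one and is what prevents $\M$ from being a toric manifold.

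To realize the data I would use the reduction picture for complexity-one spaces. Choosing a generic subcircle $S^1\subset\mathbb T^2$, its moment map $\phi_{S^1}$ is a Morse--Bott function whose regular reduced spaces are $4$-dimensional symplectic manifolds carrying the residual Hamiltonian circle action from $\mathbb T^2/S^1$; these are Hamiltonian $S^1$-four-manifolds in the sense classified by Karshon via labelled graphs. As $\phi_{S^1}$ crosses the critical values corresponding to the six fixed points, the reduced space changes by the standard wall-crossing surgery, an equivariant symplectic blow-up or blow-down, or a passage through one of the interior fixed points, and the whole $6$-manifold is reassembled from this one-parameter family of surfaces together with its Duistermaat--Heckman data. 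Equivalently, one can build $\M$ by a short sequence of equivariant symplectic cuts starting from a standard toric model and then re-gluing, which has the advantage of exhibiting the symplectic form explicitly. In either approach the free parameters $\lambda_1<\lambda_2$ enter as the critical values or cut levels, so the construction automatically produces the two-parameter family $\OM$.

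Finally I would verify that the object produced is a smooth closed symplectic $6$-manifold on which $\mathbb T^2$ acts effectively and in Hamiltonian fashion, that its fixed locus consists of exactly the six isolated points with images as in part 1), and that the closures of the one-dimensional orbit strata are precisely the nine invariant symplectic spheres projecting to the edges of Figure \ref{fig:TolmanFirst}. This last check is a matter of comparing the reconstructed moment image, edge by edge, with the prescribed graph.

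The step I expect to be the main obstacle is the global consistency of the reconstruction: the local normal forms at the six fixed points and along the nine spheres are easy to write down, but assembling them into a single smooth manifold carrying a globally defined closed symplectic form, rather than merely a symplectic form on each reduced slice, requires controlling the transition across the interior fixed points $E$ and $F$, where the reduced space degenerates and where the eventual non-K\"ahler phenomenon originates. Ensuring that the Duistermaat--Heckman measure and the gluing maps are compatible, so that the areas of the nine spheres match the data encoded by Figure \ref{fig:TolmanFirst}, is where the real work lies.
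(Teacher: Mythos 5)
Your plan correctly identifies the shape of the problem (a complexity-one Hamiltonian $\mathbb T^2$-space with two interior fixed points) and the relevant tools (Karshon's classification of Hamiltonian $S^1$-actions on $4$-manifolds, Duistermaat--Heckman data), but it has a genuine gap: the existence step is never carried out. Verifying that the weight data read off from Figure \ref{fig:TolmanFirst} is self-consistent along the nine edges is necessary but far from sufficient; the entire content of Theorem \ref{toltheorem} is the construction of a closed symplectic $6$-manifold realizing this data, and your proposal explicitly defers exactly that step (``where the real work lies'') instead of resolving it. Reassembling a $6$-manifold from the one-parameter family of reduced spaces of a generic subcircle is the subject of the Karshon--Tolman theory of complexity-one spaces; its realization theorems require more data than a GKM graph (in particular control of the reduced spaces and exceptional orbits through the critical levels, precisely at the interior points $E$ and $F$ that you flag), and invoking such machinery without executing it leaves the proof open at the one place where something must actually be built.

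The paper (Section \ref{sec:tolconstruction}, following Tolman) avoids the abstract reconstruction problem entirely by an explicit cut-and-glue at a single \emph{regular} level. One takes two concrete toric $3$-folds, $\hat M=\mathbb CP^1\times\mathbb CP^2$ and $\tilde M=\mathbb P({\mathcal O}\oplus{\mathcal O}(-3))$ over $\mathbb CP^2$, with $\mathbb T^3$-invariant forms whose moment polytopes are written down explicitly in terms of $\lambda_1,\lambda_2$, restricts the symmetry to two specified subtori $\mathbb T^2\subset\mathbb T^3$, and cuts each manifold along the level $\tilde y=\hat y=\frac{\lambda_1+\lambda_2}{2}$, which is a regular value lying strictly between $\lambda_1$ and $\lambda_1+\lambda_2$. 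The two halves are glued along their $5$-dimensional boundaries, and the gluing exists because the symplectic reductions of $\tilde M$ and $\hat M$ at this level are Hamiltonian $S^1$-$4$-manifolds which, by Karshon's uniqueness theorem, are equivariantly symplectomorphic with respect to the residual circle actions. Note in particular that the troublesome interior fixed points are not produced by any wall-crossing surgery: they are ordinary toric fixed points of $\tilde M$ that merely project to interior points of the $\mathbb T^2$-moment image, so smoothness of the manifold and closedness of $\OM$ are inherited from the two explicit toric pieces rather than proved by hand. If you want to rescue your approach, your parenthetical alternative (``a short sequence of equivariant symplectic cuts starting from a standard toric model and then re-gluing'') is the right one, but making it precise essentially amounts to rediscovering Tolman's two toric pieces and the Karshon-based identification of their boundaries.
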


\begin{figure}[!ht]
\vspace{-0.7cm}
\begin{center}
\includegraphics[scale=0.55]{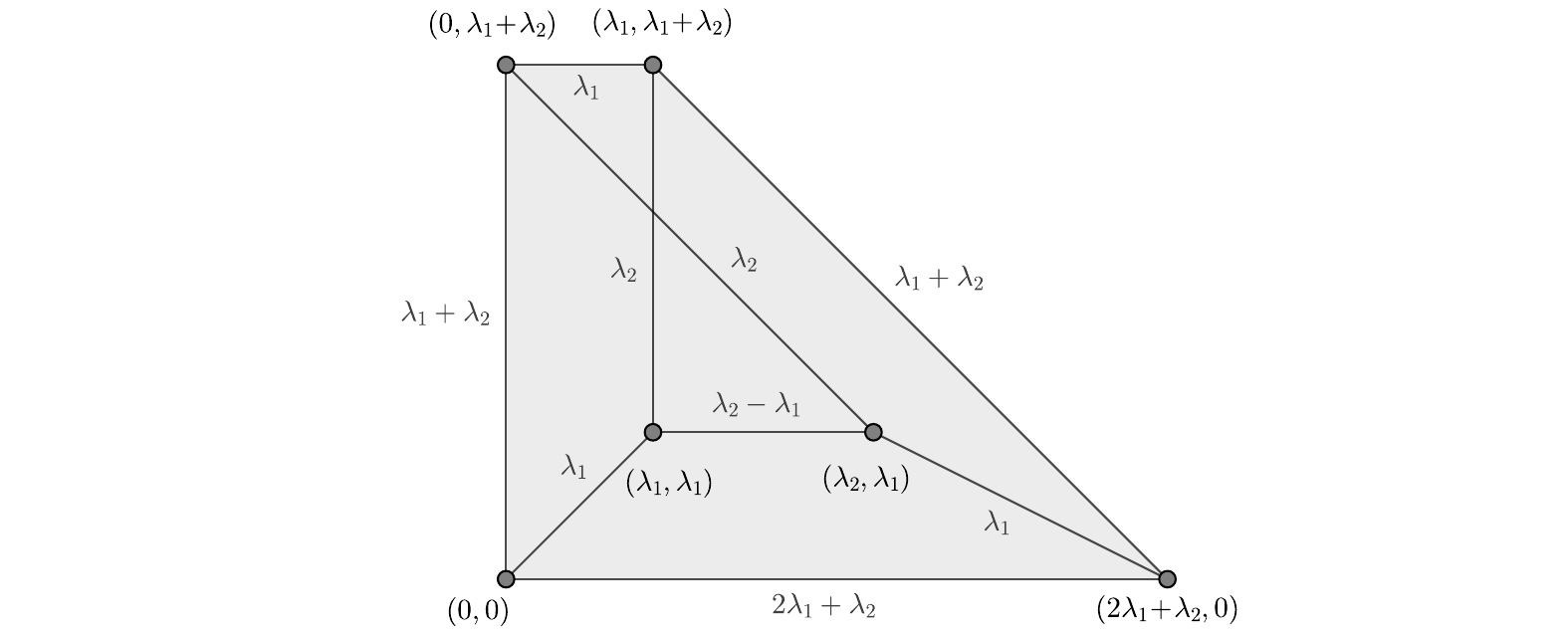}
\end{center}
\vspace{-0.7cm}
\caption{The moment image of $\M$ and the images of the nine invariant spheres}
\vspace{-0.4cm}
\label{fig:TolmanFirst}
\end{figure}
\begin{remark}\normalfont
The moment map $\phi$ is normalised so that the image $\phi(\M)$ lies in the positive quadrant $x,y\ge 0$, and the image of one fixed point is $(0,0)$. The {\it moment image} $\phi(\M)$ is the gray trapezoid depicted on Figure \ref{fig:TolmanFirst}.

The graph with $9$ edges depicted on Figure \ref{fig:TolmanFirst} is sometimes called a {\it GKM-graph}. Each $\mathbb T^2$-invariant sphere in $\M$ is sent by $\phi$ to an edge of the graph. Each such sphere is a symplectic submanifold, and so has positive area, which is indicated next to the corresponding edge. In particular, inequality $\lambda_1<\lambda_2$ ensures that the sphere corresponding to the edge joining points $(\lambda_1,\lambda_1)$ and $(\lambda_2,\lambda_1)$ has positive area. 

We also note that all edges of the graph have a rational slope in $\mathbb R^2$, and the slope doesn't change when $\lambda_1$ and $\lambda_2$ vary.
\end{remark}

{\bf Tolman's construction.} Let us briefly recall the construction of Tolman's manifold from \cite[Section 4]{To1}. We follow the construction very closely, and the only difference is that we take care of the class of symplectic form $\OM$. The form described in \cite[Section 4]{To1} corresponds to choosing $\lambda_1=1$, $\lambda_2=2$.

Tolman starts with two symplectic manifolds $\hat M$ and $\tilde M$, where  $\hat M$ is $\mathbb CP^1\times \mathbb CP^2$ and $\tilde M$ is a submanifold of $\mathbb CP^3\times \mathbb CP^2$ that can be checked to be the projectivisation of the bundle ${\mathcal O}\oplus {\mathcal O}(-3)$ over $\mathbb CP^2$  see \cite[Section 2]{To1}. Both $\hat M$ and $\tilde M$ are toric, so $\mathbb T^3$ naturally acts on them. 

The $\mathbb T^3$-invariant symplectic form on $\hat M$ is chosen so that the  $\mathbb CP^1$-fibre has area $\lambda_1$ and a line in a $\mathbb CP^2$-fibre has area $\lambda_1+\lambda_2$. Such a symplectic form exists for any positive $\lambda_1$ and $\lambda_2$. Stated using the toric terminology, we take the toric $3$-fold, with moment map $\hat \Phi: \hat M \rightarrow \R^3$, such that the moment polytope $\hat\Phi (\hat M)$ has vertices 
$(0,0,0),(\lambda_{1}+\lambda_{2},0,0),(0,\lambda_{1}+\lambda_{2},0),(0,0,\lambda_{1}),(\lambda_{1}+\lambda_{2},0,\lambda_{1}),(0,\lambda_{1}+\lambda_{2},\lambda_{1})$.

For  $\tilde M$ a $\mathbb CP^1$-fibre has area $\lambda_1$ and a line in the projectivisation of the ${\mathcal O}(-3)$-sub-bundle has area $2\lambda_1+\lambda_2$. Stated using toric terminology, we take the toric $3$-fold with moment map $\tilde{\Phi} : \tilde{M} \rightarrow \R^3$, such that polytope $\tilde\Phi (\tilde M)$ has vertices 
%$(0,0,0),(2\lambda_{1}+\lambda_{2},0,0),(0,2\lambda_{1}+\lambda_{2},0),(0,0,\lambda_{1}),(\lambda_{2}-\lambda_{1},0,\lambda_{1}),(0,\lambda_{2}-\lambda_{1},\lambda_{1})$. 
$(0,0,0),(2\lambda_{1}+\lambda_{2},0,0),(0,2\lambda_{1}+\lambda_{2},0),(\lambda_{1},\lambda_{1},\lambda_{1}),(\lambda_1,\lambda_{2},\lambda_{1}), (\lambda_2,\lambda_1,\lambda_{1})$. 
%Such a $\mathbb T^3$-invariant form exists if $\lambda_2>\lambda_1$, since a line in the projectivisation of the trivial sub-bundle has area $\lambda_2-\lambda_1$.
Such a $\mathbb T^3$-invariant form exists if and only if $\lambda_2>\lambda_1>0$. Indeed the integral lengths of the top face edges are $\lambda_2-\lambda_1$ and the non-horizontal edges have integral length $\lambda_1$.

\begin{figure}[!ht]
%\vspace{-0.5cm}
\begin{center}
\includegraphics[scale=0.55]{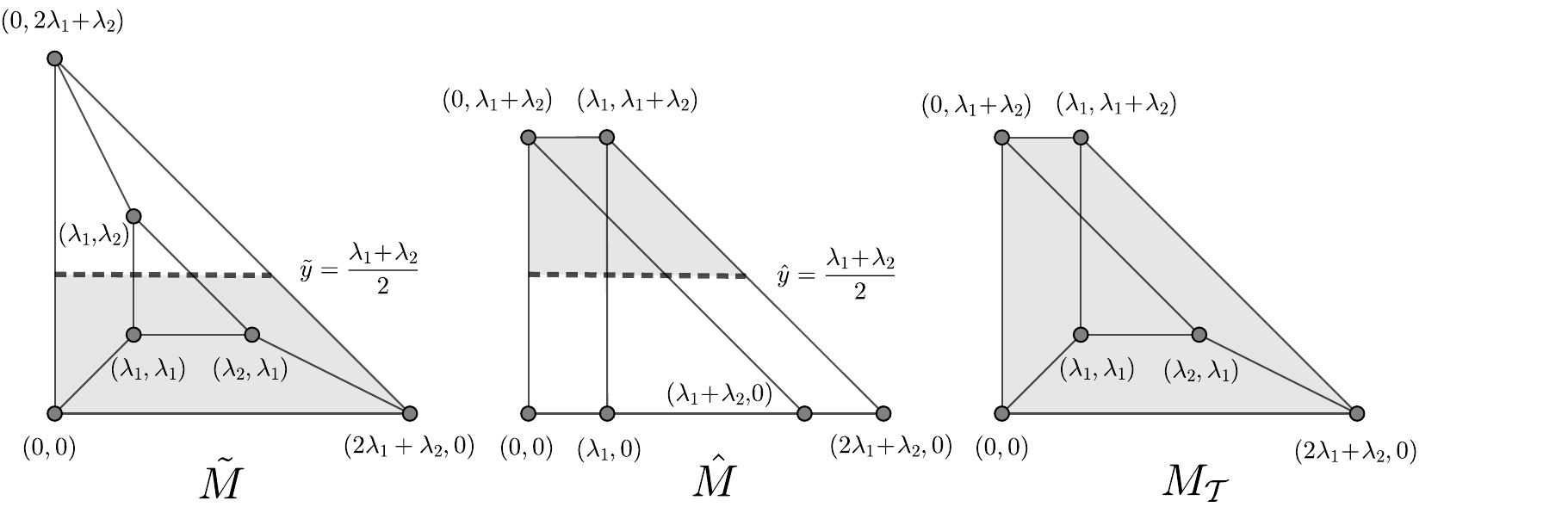}
\end{center}
\vspace{-0.5cm}
\caption{Tolman's sum construction}
\label{fig:TolmanSumI}
\end{figure}

Next one chooses a pair of $2$-dimensional subtori inside $ \mathbb T^3$ with corresponding Hamiltonian functions $\tilde \phi = (\tilde x, \tilde y)$ and $\hat \phi = (\hat x,\hat y)$. The choice of these subtori is given in \cite[Examples 2.2, 2.3]{To1}, we recall them here.  The moment map $\hat \phi$ is defined by composing $\hat{\Phi}$ with the linear map $(x,y,z) \mapsto (x+z,y)$. The moment map $\tilde \phi$ is defined by composing $\tilde{\Phi}$ with the linear map $(x,y,z) \mapsto (x,y)$.

 The images of the corresponding moment maps $\tilde \phi$ and $\hat \phi$ are shown  on the first two pictures of Figure \ref{fig:TolmanSumI}. Both images are contained in the positive quadrant, and one of $6$ fixed points is sent to $(0,0)$. The values of the moment map at all other fixed points are found from the Duistermaat-Heckman formula.

The main step of the construction ensures that two manifolds with boundary given by the inequalities   $\tilde y\le \frac{\lambda_1+\lambda_{2}}{2}$ and $\hat y\ge \frac{\lambda_1+\lambda_2}{2}$ can be glued together along their boundaries at the level  $\frac{\lambda_1+\lambda_2}{2}$ to a symplectic manifold $(\M,\OM)$ satisfying properties of Theorem \ref{toltheorem}. These manifolds are the preimages of the shaded subdomains bounded by the dotted  line. One reason for existence of such a gluing is that the symplectic reductions of $\tilde M$ and $\hat M$ at the level $\tilde y=\hat y=\frac{\lambda_1+\lambda_2}{2}$ by the corresponding $ S^1$-actions are symplectomorphic. In fact, by a theorem of Karshon these $4$-manifolds are $ S^1$-equivariantly symplectomorphic with respect to the remaining $ S^1$-actions corresponding to the Hamiltonians $\tilde x$ and $\hat x$. This finishes our recollection of Tolman's construction, for more details see \cite[Lemma 4.1]{To1}. 

We finish this discussion with the following definition and remark.

\begin{definition}\label{def:tolman}\normalfont Let $\lambda_1,\lambda_2$  satisfy $0<\lambda_1<\lambda_2$. Tolman's manifold $(\M,\OM)$ is  any symplectic $6$-manifold with an effective Hamiltonian $\mathbb T^2$-action that satisfies the conditions of Theorem \ref{toltheorem}.
\end{definition}

We note that by \cite[Corollary 3.2]{GKZ} any two symplectic manifolds satisfying Definition \ref{def:tolman} are diffeomorphic, and moreover such a diffemorphism preserves the homotopy type of a compatible almost complex structure. 
 
\begin{remark}\label{rem:toluinque}\normalfont  Tolman's construction shows that Tolman's manifold exists for any $\lambda_1,\lambda_2$  that satisfy $0<\lambda_1<\lambda_2$. However, it is not immediately obvious that the resulting manifold is unique up to a $\mathbb T^2$-equivariant symplectomorphism. Yet, we believe that for each $0<\lambda_1<\lambda_2$ any two sypmlectic manifolds satisfying the conditions of Definition \ref{def:tolman} are $\mathbb T^2$-equivariantly symplectomorphic. One could try to prove this claim using the fact that the Hamiltonian $S^1$-action induced by the Hamiltonian $\phi^*(y)$ is semi-free. This permits one to apply the classification result of Gonzales \cite{Gon} in the spirit of the work \cite{Cho1, Cho2, Cho3}. We believe that this should allow one to prove that any two such manifolds are at least equivarianlty $S^1$-symplectomorphic.
\end{remark}

%We finish this section by a remark concerning uniqueness of Tolman's manifold for $0<\lambda_1\lambda_2$. 
%
%\begin{lemma}\label{uniquelemma} Let $\lambda_1,\lambda_2$  satisfy $0<\lambda_1<\lambda_2$. Suppose that  we have two symplectic manifolds $(M',\OM')$ and $(M'', \OM'')$ satisfying conditions of Theorem \ref{toltheorem} for certain Hamiltonian $\mathbb T^2$-actions with moment maps $\phi': M'\to \mathbb R^2$ and $\phi'': M''\to \mathbb R^2$. Consider the Hamiltonian $S^1$-actions on $M'$ and $M''$ given by the Hamiltonians $\phi'^*(y)$ and $\phi''^*(y)$. Then the two symplectic manifolds $M'$ and $M''$ as equivariantly symplectomorphic with respect to two  Hamiltonian $S^1$-actions.
%\end{lemma}
%The proof of this lemma will be given in Section \ref{uniquesection}. 

\subsection{Hamiltonian $S^{1}$-actions on Tolman's manifold}\label{circle}

Let $\phi: \M\to \mathbb R^2$ be the moment map of the $\mathbb T^2$-action. Then for any two comprime integers $a,b$ the pullback function $\phi^*(ax+by)$ is the Hamiltonian of an $S^1$-action on $\M$. The $S^{1}$-action is defined by the subgroup $S^{1} \subset \mathbb{T}^{2}\cong \mathbb{R}^{2}/\mathbb{Z}^{2}$, with parametrization $t \mapsto t(a,b)$. 

%Let $a,b>0$ be coprime integers. Consider the $S^{1}$-action on $\M$, whose Hamiltonian is obtained by composing $\phi : \M \rightarrow \mathbb{R}^{2}$ with the linear function $L(x,y) = ax + by$.  The $S^{1}$-action is defined by the subgroup $S^{1} \subset \mathbb{T}^{2}$, given by the line $r \mapsto r(a,b)$ in $\mathbb{R}^{2}/\mathbb{Z}^{2}$. 

We may calculate the weights of the $S^{1}$-action by the following formula: if an oriented edge $E$ of the image of the moment map $\phi(\M)$ (see Figure \ref{fig:TolmanFirst}) is based at the image of a fixed point $p$,  and with direction given by  a primitive vector $V = (x_{1},x_{2})$ with $gcd(x_{1},x_{2})=1$, then a weight at $p$ is given by the formula:

%\[w(V) =  \det
% \begin{pmatrix}
%  b & x_{1}  \\
%  -a & x_{2} 
% \end{pmatrix}.
\vspace{-0.3cm}
$$
\omega(V)=bx_2-ax_1 .
$$

The resulting weights of the $S^{1}$-action are given now in Figure \ref{figure}. 

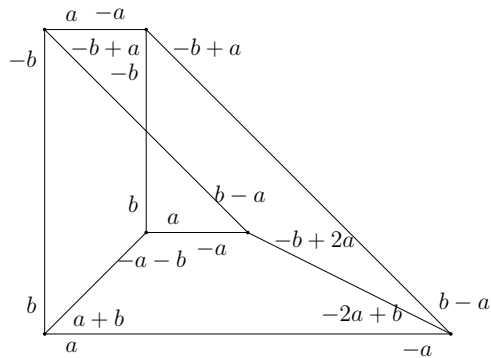
\begin{figure}[H]
\vspace{-0.3cm}
\centering
\scalebox{.75}{
\begin{tikzpicture}[scale=0.45]

\draw[fill] (0,0) circle [radius=0.05];
\draw[fill] (0,12) circle [radius=0.05];
\draw[fill] (16,0) circle [radius=0.05];
\draw[fill] (4,4) circle [radius=0.05];
\draw[fill] (8,4) circle [radius=0.05];
\draw[fill] (4,12) circle [radius=0.05];

 \draw (0,0) --(16,0) -- (4,12) -- (0,12) -- (0,0);

\draw (0,0) -- (4,4) -- (8,4) -- (0,12);

\draw(16,0) -- (8,4);

\draw(4,4) -- (4,12);

\node [above left] at (0,0.5) {$b$};
\node [below right] at (0.5,0) {$a$};
\node [right] at (0.8,0.6) {$a+b$};
\node [below left] at (0,11.5) {$-b$};

\node [above right] at (0.5,12) {$a$};
\node [ right] at (0.7,11.3) {$-b+a$};
\node [above left] at (4,4.5) {$b$};
\node [above right] at (4.5,4) {$a$};
\node [ below] at (4.2,3.6) {$-a-b$};
\node [below left] at (7.5,4) {$-a$};
\node [above] at (7.7,4.8) {$b-a$};
\node [ right] at (8.7,3.7) {$-b+2a$};
\node [right] at (4.7,11.3) {$-b+a$};
\node [above left] at (3.5,12) {$-a$};
\node [below left] at (4,11) {$-b$};
\node [above right] at (15.2,0.6) {$b-a$};
\node [above left] at (14.4,0.1) {$-2a+b$};
\node [below left] at (15.6,0) {$-a$};

\vspace{-0.5cm}
\end{tikzpicture} }

\caption{The weight of the $S^{1}$-action at each of fixed points} \label{figure}
\end{figure}

The following properties of Tolman's manifold also follow from the computation of the weights in Figure \ref{figure}.

\begin{lemma}\label{chernclass} Let $\M$ be Tolman's manifold. Then the following holds.
\begin{enumerate}
%\item $c_1(M)$ is divisible by $2$, i.e.  the class $\frac{1}{2}c_1(M)$ belongs to $H^2(M,\mathbb Z)$.
\item The value of $c_1(\M)$ on the $\mathbb{T}^{2}$-invariant spheres of $\M$ are $(6,4,4,2,2,2,2,2,0)$ as described on Figure \ref{plain figure} (left). 
\item $\M$ satisfies $c_1(\M)^3=64$.

\item $c_{2}(\M)$ is Poincar\'{e} dual to the sum of the $\mathbb{T}^{2}$-invariant spheres.
\end{enumerate}
\end{lemma}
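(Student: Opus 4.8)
The plan is to read the weight data off Figure~\ref{figure} and compute all three quantities by equivariant localisation for a generic subgroup $S^1\subset\mathbb T^2$ with coprime parameters $(a,b)$, chosen so that the six fixed points are isolated and every weight in Figure~\ref{figure} is nonzero.

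For part~(1) I would apply Lemma~\ref{c1formula} to each of the nine invariant spheres in turn. A sphere $S$ corresponds to an edge of the GKM graph with primitive direction $V$; its endpoints are the fixed points $S_{\min},S_{\max}$ at which the Hamiltonian $\phi^*(ax+by)$ is smallest and largest, and the stabiliser order $w(S)$ is the absolute value of the weight of the induced circle rotation on $S$. Substituting the three weights at each endpoint into $\langle c_1,S\rangle=(\sum w_i'-\sum w_i)/w(S)$ produces the nine values. For example, on the bottom edge joining $(0,0)$ and $(2\lambda_1+\lambda_2,0)$ the endpoint weight-sums are $2a+2b$ and $2b-4a$, giving $\frac{(2a+2b)-(2b-4a)}{|a|}=6$; on the short edge joining $(\lambda_1,\lambda_1)$ and $(\lambda_2,\lambda_1)$ both endpoint weight-sums vanish, giving $0$. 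Running through all nine edges yields $(6,4,4,2,2,2,2,2,0)$. A good internal check is that the result is manifestly independent of $(a,b)$: the sign introduced by the absolute value defining $w(S)$ is cancelled by the interchange of $S_{\min}$ and $S_{\max}$ when $(a,b)$ is varied, as it must be since $\langle c_1,S\rangle$ is topological.

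For part~(2) I would apply ABBV localisation to the equivariant class $(c_1^{S^1})^3$. Since the fixed points are isolated this reads
$$c_1(\M)^3=\sum_{p\in\M^{S^1}}\frac{\big(w_1^p+w_2^p+w_3^p\big)^3}{w_1^p\,w_2^p\,w_3^p},$$
where $w_1^p,w_2^p,w_3^p$ are the three weights at $p$. The fixed points mapping to $(\lambda_1,\lambda_1)$ and $(\lambda_2,\lambda_1)$ have vanishing weight-sum and drop out; grouping the four remaining contributions into two pairs, the rational functions of $a,b$ collapse to $32+32$, so $c_1(\M)^3=64$.

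Part~(3) is the delicate one. Geometrically, $c_2(\M)=c_{n-1}(\M)$ is Poincar\'e dual to the locus where the two vector fields generating the $\mathbb T^2$-action become linearly dependent, and this locus is precisely the union of the nine invariant spheres together with the finitely many fixed points; a general-position argument then identifies $c_2$ with $\sum_i[S_i]$, provided each sphere occurs with multiplicity one. Establishing this multiplicity, which I would extract from the GKM condition that the weights at each fixed point are pairwise independent, is the main obstacle. To sidestep transversality I would instead verify the equivalent identity $\langle c_2\cup\alpha,[\M]\rangle=\sum_i\langle\alpha,S_i\rangle$ for $\alpha$ ranging over a basis of the rank-two group $H^2(\M;\mathbb Q)$: the left side is computed by ABBV localisation from $c_2^{S^1}|_p=w_1^pw_2^p+w_1^pw_3^p+w_2^pw_3^p$, and the right side by evaluating $\alpha$ on each sphere. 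Only two classes need testing, and taking $\alpha=c_1$ already reproduces $\sum_i\langle c_1,S_i\rangle=24$, matching part~(1) and giving a useful consistency check; the remaining labour is the bookkeeping of equivariant extensions for a second generator of $H^2(\M)$.
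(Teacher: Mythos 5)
Your parts (1) and (2) follow the paper's own route: the paper likewise evaluates $c_1(\M)$ on each sphere via Lemma \ref{c1formula} and the weights of Figure \ref{figure}, and computes $c_1^3=64$ by the same ABBV fixed-point formula (the paper just specialises to $a=2$, $b=1$ rather than keeping $(a,b)$ generic; your pairing of the four nonzero contributions into $32+32$ is correct). Part (3) is where you genuinely diverge. The paper does not argue directly at all: it notes that the $\mathbb{T}^2$-action satisfies the GKM condition (citing \cite[Lemma 4.1]{GKZ}), concludes via \cite[Lemma 4.11]{GHS} that the nine invariant spheres form a toric $1$-skeleton, and then quotes \cite[Lemma 4.13]{GHS}, which states precisely that the sum of such spheres is Poincar\'e dual to $c_2$. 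Your first, geometric instinct (degeneracy locus of the two generating vector fields) is essentially the idea behind that cited result, and you are right to distrust it as stated because of the multiplicity/transversality issue; your fallback is sound and self-contained. Since $H^2(\M,\mathbb{Z})$ and $H_2(\M,\mathbb{Z})$ are free of rank $2$ and the Kronecker pairing is perfect, checking $\int_\M c_2\cup\alpha=\sum_i\int_{S_i}\alpha$ on a $\mathbb{Q}$-basis does suffice. Your check with $\alpha=c_1$ gives $24=24$, and the second class you leave as ``bookkeeping'' genuinely works out: take $\alpha=[\OM]$, whose equivariant extension is the equivariant symplectic class with fixed-point restrictions determined by the Hamiltonian values listed in Theorem \ref{toltheorem}; the resulting localisation sum equals $6\lambda_1+6\lambda_2$, which matches the total area of the nine spheres read off Figure \ref{fig:TolmanFirst}, and varying $(\lambda_1,\lambda_2)$ covers a class independent of $c_1$. (Note also that the ambiguity in choosing an equivariant extension is harmless here, since it shifts the extension by a multiple of the equivariant parameter and the corresponding correction term vanishes for degree reasons.) The trade-off: the paper's route is shorter and yields the conceptually stronger toric $1$-skeleton statement, while yours avoids the GKM machinery and the external input from \cite{GKZ} and \cite{GHS} at the cost of one additional explicit Duistermaat--Heckman-type computation.
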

\begin{figure}[H]
\vspace{-0.5cm}
\centering
\scalebox{.8}{
\begin{tikzpicture}

\draw[fill] (5,0) circle [radius=0.05];
\draw[fill] (5,3) circle [radius=0.05];
\draw[fill] (9,0) circle [radius=0.05];
\draw[fill] (6,1) circle [radius=0.05];
\draw[fill] (7,1) circle [radius=0.05];
\draw[fill] (6,3) circle [radius=0.05];

\node [left] at (5,1.5) {$S_{1}$};

\node [below right] at (5.5,0.7) {$S_{2}$};

\draw (5,0) --(9,0) -- (6,3) -- (5,3) -- (5,0);

\draw (5,0) -- (6,1) -- (7,1) -- (5,3);

\draw (7,1) -- (9,0);

\draw(6,1) -- (6,3);

\draw[fill] (-3,0) circle [radius=0.05];
\draw[fill] (-3,3) circle [radius=0.05];
\draw[fill] (1,0) circle [radius=0.05];
\draw[fill] (-2,1) circle [radius=0.05];
\draw[fill] (-1,1) circle [radius=0.05];
\draw[fill] (-2,3) circle [radius=0.05];

\draw (-3,0) --(1,0) -- (-2,3) -- (-3,3) -- (-3,0);

\draw (-3,0) -- (-2,1) -- (-1,1) -- (-3,3);

\draw(1,0) -- (-1,1);

\draw(-2,1) -- (-2,3);

\node [below] at (-1,0) {$6$};
\node [below] at (-1.5,1) {$0$};
\node [left] at (-3,1.5) {$4$};
\node [above] at (-2.5,3) {$2$};
\node [left] at (-2,1.5) {$2$};
\node [above right] at (-1.7,1.5) {$2$};
\node [above right] at (-0.7,1.5) {$4$};
\node [below right] at (-2.6,0.6) {$2$};
\node [below left] at (-0.2,0.7) {$2$};

\end{tikzpicture} 
}

\caption{Left: The value of $c_{1}(M)$ on the invariant spheres. Right: The image of $\mathbb{T}^{2}$-invariant spheres in Tolman's manifold. } \label{plain figure}
\end{figure}
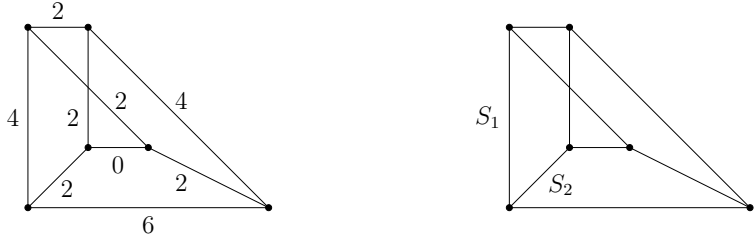

\begin{proof}
% 1) Note that the second integral homology group $H_2(M,\mathbb Z)$ is integrally generated by the classes of isotropy spheres. Hence the first statement follows from the second one. 

1)  The value of $c_1(\M)$ on each $\mathbb{T}^{2}$-invariant sphere can be calculated, using Lemma \ref{c1formula}, from the weights of $S^{1}$-action which are given in Figure \ref{figure}.

2) To find $c_1(\M)^3$ one first chooses on $\M$ an $S^1$-action with isolated fixed points, for example taking $a=2,\,b=1$ above, and calculates the weights at all fixed point by plugging $a=2,\,b=1$ into Figure \ref{figure}. 
Then one applies the ABBV localisation formula \cite{AB,BV}, which is explicitly sated in \cite[Remark 2.5]{To2}. Let us recall this formula. The Chern number $c_1(\M)^3$ is equal to the sum of contributions $F(p)$ over the fixed set $\M^{S^{1}}$, where for a fixed point with weights $w_{1},w_{2},w_{3}$ we have 
$$F(p) = \frac{(w_{1}+w_{2}+w_{3})^3}{w_{1}w_{2}w_{3}},\;\;\; \int_{\M} c_{1}(\M)^3 = \sum_{p \in \M^{S^{1}}}  F(p).$$

%2) This calculation can be done choosing any action with isolated fixed points (for example choosing $a=2,\,b=1$ above) and using the ABBV localisation formula \cite{AB,BV} for $c_{1}(\M)^{3}$ (see also \cite[Remark 2.5]{To2}), using the weights computed in Figure \ref{figure}. We state the formula for convenience: if the weights at some fixed point $p$ are $w_{1},w_{2},w_{3}$ we define $$F(p) = \frac{(w_{1}+w_{2}+w_{3})^3}{w_{1}w_{2}w_{3}}.$$  Let $\M^{S^{1}}$ be the fixed point set of the action, then $\int_{\M} c_{1}(\M)^3 = \sum_{p \in \M^{S^{1}}}  F(p)$.

3) The Hamiltonian $\mathbb{T}^{2}$-action on $\M$ satisfies the GKM condition by \cite[Lemma 4.1]{GKZ}. By \cite[Lemma 4.11]{GHS} the collection of $\mathbb{T}^{2}$-invariant spheres is a toric $1$-skeleton for $\M$ (in the sense of \cite[Definition 4.10]{GHS}), for any sub $S^{1}$-action, given by a linear subgroup $S^{1} \subset \mathbb{T}^{2}$. Hence  by \cite[Lemma 4.13]{GHS} the sum of $\mathbb{T}^{2}$-invariant spheres is Poincar\'{e} dual to $c_{2}(\M)$.
\end{proof}

\subsection{Integer bases in $H_2(\M,\mathbb Z)$ and $H^2(\M,\mathbb Z)$}\label{sec:basic}

The goal of this section is to prove Corollary \ref{S1S2Basis} and Lemma \ref{baseinH2}, where we construct bases in $H_2(\M,\mathbb Z)$ and $H^2(\M,\mathbb Z)$. We start with the following standard lemma whose proof we present for a lack of a reference.
\begin{lemma}\label{morselemma} Let $M$ be a smooth compact  $2n$-dimensional manifold with a Morse function $f$ whose critical points have even indices. Suppose that for each critical point $x_i$ of index $2m$ there is a $2m$-dimensional compact oriented submanifold $N_i\subset M$ containing $x_i$, such that $f$ restricted to $N$ has a unique maximum that is attained at $x_i$. Then the homology classes $[N_i]$ form an integral basis of $H_{2m}(M,\mathbb Z)$. 
\end{lemma}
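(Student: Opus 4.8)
The plan is to combine the standard Morse-theoretic computation of $H_*(M,\mathbb Z)$ with an intersection-pairing argument. Since every critical point of $f$ has even index, the associated handle decomposition attaches only even-dimensional cells; hence the cellular boundary maps vanish, $H_*(M,\mathbb Z)$ is free and supported in even degrees, and $\mathrm{rank}\,H_{2m}(M,\mathbb Z)$ equals the number $r_m$ of critical points of index $2m$. In particular the number of submanifolds $N_i$ of a given type $2m$ coincides with $\mathrm{rank}\,H_{2m}(M,\mathbb Z)$, so it suffices to show that the classes $[N_i]$ pair unimodularly against a basis of the complementary group $H_{2n-2m}(M,\mathbb Z)$.

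To produce such a complementary basis I would fix a Morse--Smale gradient-like vector field for $f$ and pass to $-f$, which again has only even indices. For each index-$2m$ critical point $x_j$ the stable manifold $W^s(x_j)$ (of dimension $2n-2m$) is the unstable manifold of $x_j$ for $-f$; its closure $\overline{W^s(x_j)}$ is a $(2n-2m)$-cycle, and by the standard Morse homology of $-f$ (whose differential again vanishes for parity reasons) the classes $\{[\overline{W^s(x_j)}]\}_j$ form an integral basis of $H_{2n-2m}(M,\mathbb Z)$. Because $H_*(M,\mathbb Z)$ is torsion-free, Poincar\'e duality makes the intersection pairing $H_{2m}(M,\mathbb Z)\times H_{2n-2m}(M,\mathbb Z)\to\mathbb Z$ perfect, and it therefore suffices to prove that the matrix $A_{ij}=[N_i]\cdot[\overline{W^s(x_j)}]$ is unimodular.

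The heart of the argument is the computation of $A$, and this is exactly where the hypothesis on the $N_i$ enters. Order the index-$2m$ critical points so that $f(x_1)\le\cdots\le f(x_{r_m})$. Every point of $N_i$ satisfies $f\le f(x_i)$, since $x_i$ is the unique maximum of $f|_{N_i}$, while every point of $\overline{W^s(x_j)}$ satisfies $f\ge f(x_j)$; hence $N_i\cap\overline{W^s(x_j)}=\varnothing$ whenever $f(x_j)>f(x_i)$, and the same level-value bookkeeping shows the off-diagonal entries at equal levels also vanish, so $A$ is triangular with respect to this order. For the diagonal, any point of $N_i\cap\overline{W^s(x_i)}$ must lie at the level $f(x_i)$; the only such point on $N_i$ is $x_i$, and the only such point of $\overline{W^s(x_i)}$ is $x_i$, so $x_i$ is the unique intersection point. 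There $T_{x_i}N_i$ is a $2m$-dimensional subspace on which the Hessian of $f$ is negative semi-definite (as $x_i$ is a local maximum of $f|_{N_i}$), whereas $T_{x_i}\overline{W^s(x_i)}$ is the positive eigenspace $E^+$ of the Hessian; a common nonzero vector would make the Hessian both $\le 0$ and $>0$, so $T_{x_i}N_i\cap E^+=0$, and by dimension count the intersection is transverse with sign $\pm1$. Thus $A_{ii}=\pm1$, $\det A=\pm1$, and the perfect pairing forces $\{[N_i]\}$ to be an integral basis of $H_{2m}(M,\mathbb Z)$.

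The step I expect to require the most care is the use of the non-compact stable manifolds: one must invoke Morse--Smale transversality and the compactness of $\overline{W^s(x_j)}$ to know that these closures are genuine cycles representing the dual Morse basis of $H_{2n-2m}(M,\mathbb Z)$. A self-contained alternative that avoids stable manifolds is an induction over the sublevel sets $M^a=f^{-1}(-\infty,a]$: the unique-maximum hypothesis gives $N_i\subset M^{f(x_i)}$, each even-index critical level splits off a free summand of $H_{2m}$ via the long exact sequence of the pair, and one checks that $[N_i]$ maps to the corresponding handle generator in $H_{2m}(M^{c+\varepsilon},M^{c-\varepsilon})$ — a verification that again rests on $T_{x_i}N_i$ meeting the descending subspace at $x_i$ as above.
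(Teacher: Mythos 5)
Your proposal follows the same skeleton as the paper's proof: (i) evenness of the indices gives that $H_*(M,\mathbb Z)$ is free with $\mathrm{rank}\,H_{2m}$ equal to the number of index-$2m$ critical points, (ii) one produces dual $(2n-2m)$-cycles $h_j$ supported essentially in the superlevel sets $M_{\ge f(x_j)}$, and (iii) the level-set bookkeeping plus the Hessian-eigenspace argument at $x_i$ (negative semi-definite on $T_{x_i}N_i$ versus positive definite on the ascending directions) makes the matrix $[N_i].h_j$ triangular with $\pm1$ diagonal, hence unimodular. The one genuine divergence is step (ii), and it is exactly the step you flag as delicate. You take $h_j=[\overline{W^s(x_j)}]$ and assert that "standard Morse homology of $-f$" makes these closures integral cycles forming a basis; that is an overstatement. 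Morse homology counts flow lines — it does not by itself give the closure of a stable manifold a fundamental class. That closures of (un)stable manifolds of a Morse--Smale gradient are cycles representing the expected basis is a theorem in the style of Laudenbach's work on the Thom--Smale complex; it is true in your setting (evenness of indices forces the added strata to have codimension $\ge 2$), but it is heavy machinery and requires choosing a Morse--Smale metric. The paper sidesteps this entirely: it takes only a small embedded $(2n-2m)$-simplex $\Delta_j$ in the ascending manifold around $x_j$, notes that $\partial\Delta_j\subset M_{\ge c_j}$ with $c_j=f(x_j)+\varepsilon$, and caps it by an arbitrary singular chain $C_j\subset M_{\ge c_j}$, which exists because $H_{2n-2m-1}(M_{\ge c_j},\mathbb Z)=0$ (vanishing of odd homology of superlevel sets, which is part of the same standard Morse theory you already invoke in step (i)). The cycle $\Delta_j-C_j$ then plays the role of your $\overline{W^s(x_j)}$ in the triangularity argument, with no transversality hypotheses on the gradient flow. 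A related simplification you could have used: the $h_j$ need not form a basis of $H_{2n-2m}(M,\mathbb Z)$ at all. Since $H_{2m}(M,\mathbb Z)$ is free of rank $b_{2m}$, a determinant-$\pm1$ pairing of the $[N_i]$ against \emph{any} $b_{2m}$ classes already forces the $[N_i]$ to be a basis; this is precisely why the paper can work with its ad hoc capped cycles, and it removes the hardest half of what you were trying to extract from the stable manifolds.
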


For any $t\in \mathbb R$  we denote by $M_{\le t}$ the subset of $M$ consisting of points where $f\le t$ and we denote by $M_{\ge t}$ the subset of points where $f\ge t$. Let us recall the following classical result of Morse theory.

\begin{theorem}\label{standardMorse} Let $M$ be a smooth compact  manifold with a Morse function $f$ whose critical points have even indices. 
\begin{enumerate}
\item For any $t$ and any odd integer $k$, $H_{k}(M_{\le t},\mathbb Z)=0=H_{k}(M_{\ge t},\mathbb Z)$.
\item For any even $k$, $H_k(M,\mathbb Z)$ is a free abelian group of dimension equal to the number of index $k$ critical points of $f$.
\end{enumerate}

\end{theorem}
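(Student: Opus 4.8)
The plan is to realise $M$ as a cell complex via the Morse function $f$ and exploit that all cells occur in even dimensions. First I would recall the fundamental handle-attachment results (Milnor, \emph{Morse Theory}, Theorems 3.1 and 3.2): if $c$ is a critical value and $\epsilon>0$ is small enough that $[c-\epsilon,c+\epsilon]$ contains no other critical value, then $M_{\le c+\epsilon}$ is homotopy equivalent to $M_{\le c-\epsilon}$ with one cell of dimension equal to $\mathrm{ind}(p)$ attached for each critical point $p\in f^{-1}(c)$; and if $[a,b]$ contains no critical value then $M_{\le a}$ is a deformation retract of $M_{\le b}$. Since only finitely many critical values occur, these two facts let me compute the homology of each sublevel set by induction on the critical values crossed.

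I would then prove the assertion for $M_{\le t}$ by induction on the critical values $c_1<\cdots<c_N$, the inductive hypothesis being that $H_*(M_{\le t})$ is free abelian, concentrated in even degrees, with $\mathrm{rank}\,H_{2m}$ equal to the number of index-$2m$ critical points $p$ with $f(p)\le t$. Below $c_1$ the sublevel set is empty. To cross $c_j$, I examine the long exact sequence of the pair $(M_{\le c_j+\epsilon},M_{\le c_j-\epsilon})$. By the handle-attachment result the relative homology $H_k(M_{\le c_j+\epsilon},M_{\le c_j-\epsilon})$ is free and concentrated in the even degrees equal to the indices of the critical points at level $c_j$, with rank in degree $2m$ equal to the number of such points of index $2m$. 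Because both the relative groups and (by induction) the groups $H_*(M_{\le c_j-\epsilon})$ vanish in odd degrees, every connecting homomorphism has zero source or zero target; the sequence collapses into short exact sequences $0\to H_{2m}(M_{\le c_j-\epsilon})\to H_{2m}(M_{\le c_j+\epsilon})\to H_{2m}(M_{\le c_j+\epsilon},M_{\le c_j-\epsilon})\to 0$ in even degrees and forces $H_{\mathrm{odd}}(M_{\le c_j+\epsilon})=0$. Since the quotient is free these sequences split, advancing the induction. Taking $t=\max f$ gives $M_{\le t}=M$ and hence part (2), and the vanishing for every $M_{\le t}$ with $t$ a regular value is part (1); for $t$ a critical value I note that the inclusion $M_{\le t}\hookrightarrow M_{\le t+\epsilon}$ is a homotopy equivalence, so the conclusion persists.

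To treat $M_{\ge t}$ I would apply the preceding analysis to $g=-f$. This is again a Morse function with the same critical points, and $\mathrm{ind}_g(p)=2n-\mathrm{ind}_f(p)$, which is even because $\mathrm{ind}_f(p)$ is; moreover $M_{\ge t}=\{g\le -t\}$ is a sublevel set of $g$. The already-established vanishing of odd-degree homology for sublevel sets therefore yields $H_{\mathrm{odd}}(M_{\ge t})=0$, completing part (1).

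I expect the only delicate points to be the bookkeeping at critical (as opposed to regular) values of $t$, and the verification that the relative homology of the pair is concentrated in the even degrees prescribed by the indices; both are standard consequences of Milnor's theorems. Once this parity input is in place, the long-exact-sequence argument is essentially forced: the even-index hypothesis makes $f$ a perfect Morse function over $\mathbb{Z}$ and annihilates every differential, so no genuinely hard step arises.
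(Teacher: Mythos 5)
The paper never proves this statement: it is quoted as ``the following classical result of Morse theory'' and used as a black box in the proof of Lemma \ref{morselemma}, so there is no in-paper argument to compare against. Your proposal supplies exactly the standard proof being invoked (even-index cells make $f$ a perfect Morse function over $\mathbb{Z}$; all connecting homomorphisms die for parity reasons), and the inductive long-exact-sequence argument you give is correct.

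Two points should be tightened before your write-up is fully rigorous. First, your treatment of $M_{\ge t}$ via $g=-f$ uses $\mathrm{ind}_g(p)=2n-\mathrm{ind}_f(p)$, i.e.\ it silently assumes $\dim M$ is even, which is not among the hypotheses of Theorem \ref{standardMorse}. It does follow from them: your cell-structure argument gives $\chi(M)=\#\{\text{critical points}\}>0$, whereas a closed odd-dimensional manifold has $\chi=0$, so every component of $M$ is even-dimensional; one sentence to this effect is needed (in the paper's application $M$ is a symplectic $2n$-manifold, so the issue is invisible there). Second, your claim that $M_{\le c}\hookrightarrow M_{\le c+\epsilon}$ is a homotopy equivalence when $c$ is a critical value is true but is not an instance of Milnor's Theorem 3.1, since $f^{-1}[c,c+\epsilon]$ contains critical points; it requires the stronger deformation statement that handles flow lines limiting into critical points at level exactly $c$. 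Alternatively you can sidestep it entirely, since part (1) at critical values of $t$ is never needed: the paper's proof of Lemma \ref{morselemma} only applies the vanishing to levels $c_j=f(x_j)+\varepsilon$, and $\varepsilon$ may always be chosen so that these are regular values.
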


\begin{proof}[Proof of Lemma \ref{morselemma}]Let $b_{2m}$ be the $2m$-th Betti number of $M$. Thanks to Theorem \ref{standardMorse} 2), to show that the classes $[N_i]$ form an integer basis in $H_{2m}(M,\mathbb Z)$ it is enough to find $b_{2m}$ elements $h_1,\ldots, h_{b_{2m}} $ in $H_{2n-2m}(M,\mathbb Z)$ such that the $b_{2m}\times b_{2m}$ intersection matrix $a_{ij}=[N_i].h_j$ has determinant $\pm 1$.

We will construct the elements $h_j$ as cycles in singular homology.
%we slightly perturb $f$ by adding to it a  non-negative function supported in a small neighbourhood of critical points, to ensure that at each critical level of $f$ there is exactly one critical point. Next, we 
Let's assume that critical points of index $2m$ are enumerated as $x_1,\ldots, x_{b_{2m}}$,  so that $f(x_i)\le f(x_j)$ for $i<j$.

Let us choose any Riemannian metric $g$ on $M$ and let $U_j$ be the $2n-2m$-dimensional unstable manifold corresponding to $x_j$.  We will define $h_j$ as the class of the following cycle. First choose $\Delta_j$, a $2n-2m$-simplex embedded in $U_j$ such that $x_j$ lies in the interior of $\Delta_j$. Then, by definition of $U_j$, for some $\varepsilon>0$ we have 
$$\min_{\partial(\Delta_j)}f> f(x_j)+\varepsilon.$$
Set $c_j=f(x_j)+\varepsilon$ and note that $\partial \Delta_j$ belongs to $M_{\ge c_j}$. Since $H_{2n-2m-1}(M_{\ge c_j},\mathbb Z)=0$, 
%At the same time, by Theorem \ref{standardMorse} 1) the class $\partial(\Delta_j)$ is a zero cycle in $M_{\ge c_j}$.  
there exists a $2n-2m$-chain $C_j$ contained in  $M_{\ge c_j}$, such that $\partial C_j=\partial(\Delta_j)$. Finally, we set $h_j=[\Delta_j-C_j]$, by construction it is an element of $H_{2n-2m}(M,\mathbb Z)$.

To prove that $\det(a_{ij})=\pm 1$, we note that $(a_{ij})$ is an upper-triangular matrix with entries $\pm 1$ on the diagonal. Indeed, in case $i<j$ the cycles $N_i$ and $\Delta_j-C_j$ are disjoint. And in the case $i=j$ they intersect transversally in one point, namely $x_i$, because by definition $U_i\cap N_i=x_i$ and the intersection at $x_i$ is transversal.
\end{proof}

\begin{corollary}\label{S1S2Basis}  Let $S_{1},S_{2} \subset \M$ be the two $\mathbb{T}^{2}$-invariant spheres depicted in Figure \ref{plain figure} (right). Then $\{[S_{1}],[S_{2}]\}$ is an integral basis of $H_{2}(\M,\mathbb{Z}) \cong \mathbb{Z}^2$.
\end{corollary}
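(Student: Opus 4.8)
The plan is to apply Lemma~\ref{morselemma} to Tolman's manifold $\M$ equipped with a generic Morse function coming from the Hamiltonian circle action. Concretely, I would choose coprime integers $a,b$ (for example $a=2$, $b=1$) so that the associated $S^1$-action has isolated fixed points, and take $f=\phi^*(ax+by)$ as the Morse function. Since this is a Hamiltonian for a circle action with isolated fixed points, it is a Morse--Bott, in fact Morse, function all of whose critical points have even index, matching the hypotheses of Lemma~\ref{morselemma}. The index of each fixed point is twice the number of negative weights, read off directly from Figure~\ref{figure}.

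Next I would identify the critical points of index $2$, i.e.\ the fixed points with exactly one negative weight. Lemma~\ref{morselemma} with $2m=2$ tells us that if for each such critical point we produce a $2$-dimensional compact oriented submanifold $N_i$ on which $f$ attains its unique maximum at that fixed point, then the classes $[N_i]$ form an integral basis of $H_2(\M,\mathbb Z)$. The candidates for the $N_i$ are precisely the $\mathbb T^2$-invariant spheres, which are embedded symplectic $2$-spheres and hence compact oriented submanifolds; on each such sphere the Hamiltonian $f$ restricts to a function whose maximum is attained at the fixed point $S_{\max}$ sitting at the higher end of the corresponding edge. So I would select, among the nine invariant spheres, the two whose \emph{maxima} are the two index-$2$ fixed points, and check that these are exactly $S_1$ and $S_2$ as drawn in Figure~\ref{plain figure} (right). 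This is the step requiring care: one must verify, using the slopes of the edges in Figure~\ref{figure} together with the chosen values $a=2$, $b=1$, that there are exactly two fixed points of index $2$, and that each is the top vertex (with respect to $f$) of exactly one of the spheres $S_1,S_2$, so that the hypothesis ``unique maximum attained at $x_i$'' holds for the right pair.

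The main obstacle I anticipate is precisely this bookkeeping: I must confirm that the number of index-$2$ critical points equals $b_2(\M)=2$ (consistent with $\M$ being a $\CP^1$-bundle over $\CP^2$), and that $S_1$ and $S_2$ are genuinely the invariant spheres realizing the maxima at those two fixed points rather than passing through them with a non-maximal value. Since the slopes of the GKM-graph edges are fixed and independent of $\lambda_1,\lambda_2$, this reduces to a finite check against Figure~\ref{figure}: for the chosen $(a,b)$ I evaluate $\omega(V)=bx_2-ax_1$ on each outgoing edge direction at each of the six fixed points, determine the index, and trace which edges descend from the two index-$2$ points. Once this is verified, Lemma~\ref{morselemma} applies verbatim and yields that $\{[S_1],[S_2]\}$ is an integral basis of $H_2(\M,\mathbb Z)\cong\mathbb Z^2$. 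The identification $H_2(\M,\mathbb Z)\cong\mathbb Z^2$ itself follows from Theorem~\ref{standardMorse}(2), since $b_2$ equals the number of index-$2$ critical points, which the weight computation shows to be two.
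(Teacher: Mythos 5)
Your proposal is correct and follows essentially the same route as the paper: the paper also applies Lemma \ref{morselemma} to a Morse function of the form $\phi^*(ax+by)$, choosing $\phi^*(3x+2y)$ where you choose $\phi^*(2x+y)$, and both choices work since in each case the two index-$2$ critical points are exactly the fixed points over $(0,\lambda_1+\lambda_2)$ and $(\lambda_1,\lambda_1)$, which are the unique maxima of the Hamiltonian restricted to $S_1$ and $S_2$ respectively. The finite check you anticipate does go through for $(a,b)=(2,1)$: all weights lie in $\{\pm 1,\pm 2,\pm 3\}$, so the function is Morse with only even indices, and the bookkeeping against Figure \ref{figure} confirms the hypotheses of Lemma \ref{morselemma}.
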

\begin{proof}\footnote{We would like to thank Silvia Sabatini for sharing with us a result on equivariant chohomology that can be used to give an alternative proof of this corollary.} Since for a generic $\mathbb S^1\subset \mathbb T^2$ the corresponding Hamiltonian is a Morse function on $\M$ with exactly two critical points of index $2$, we have by Theorem \ref{standardMorse} 2) that $H_2(\M,\mathbb Z)\cong \mathbb Z^2$. So, according to Lemma \ref{morselemma} it is enough to find a Morse function $f$ on $\M$ so that its restriction to both $S_1$ and $S_2$ has a unique maximum attained at a critical point of $f$ of index $2$. On can check that such a function can be taken as $\phi^*(3x+2y)$, where $\phi$ is the moment map.
\end{proof}

This corollary has the following application.
%We finish this section by the following immediate observation about cohomology classes of symplectic forms $\OM$.
\begin{lemma}\label{baseinH2} There exist two cohomology classes $\xi',\eta'\in H^2(\M,\mathbb R)$ such that $[\OM]=\lambda_1\xi'+\lambda_2\eta'$. The classes $\xi'$ and $\eta'$ evaluate on invariant spheres as it is shown on Figure \ref{pullback}. Moreover $\xi'$ and $\eta'$ belong to $H^2(\M,\mathbb Z)$ and form its basis.
\end{lemma}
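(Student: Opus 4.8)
The plan is to define $\xi'$ and $\eta'$ directly by prescribing their values on the homology basis $\{[S_1],[S_2]\}$ furnished by Corollary \ref{S1S2Basis}, to read the symplectic areas of the invariant spheres off the moment graph of Figure \ref{fig:TolmanFirst}, and then to check everything by the linear algebra of a unimodular pairing. The first thing I would record is the topological input that lets one detect integral classes on two homology generators. A generic circle $S^1\subset\mathbb T^2$ induces a Morse function on $\M$ all of whose critical points have even index, so Theorem \ref{standardMorse} gives $H_{\mathrm{odd}}(\M,\mathbb Z)=0$; in particular $H_1(\M,\mathbb Z)=0$. By the universal coefficient theorem the $\operatorname{Ext}$ term then vanishes and $H^2(\M,\mathbb Z)\cong\operatorname{Hom}(H_2(\M,\mathbb Z),\mathbb Z)\cong\mathbb Z^2$, so $H^2(\M,\mathbb Z)$ is free and the evaluation pairing against $H_2(\M,\mathbb Z)$ is unimodular. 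Consequently a class in $H^2(\M,\mathbb R)$ is integral precisely when it takes integer values on $[S_1]$ and $[S_2]$, and it is determined by those two values.

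Next I would compute the areas. Each invariant sphere $S$ is sent by $\phi$ to an edge of the graph in Figure \ref{fig:TolmanFirst}, and $\langle[\OM],[S]\rangle$ equals the lattice length of that edge; reading off the vertex coordinates listed in Theorem \ref{toltheorem}, every such length is of the form $a\lambda_1+b\lambda_2$ with $a,b\in\mathbb Z$ (for instance the edge joining $(\lambda_1,\lambda_1)$ and $(\lambda_2,\lambda_1)$ gives $\lambda_2-\lambda_1$). In particular $\langle[\OM],[S_1]\rangle=\lambda_1+\lambda_2$ and $\langle[\OM],[S_2]\rangle=\lambda_1$. I then define $\xi',\eta'\in H^2(\M,\mathbb Z)$ to be the unique integral classes with $\langle\xi',[S_1]\rangle=\langle\xi',[S_2]\rangle=1$ and $\langle\eta',[S_1]\rangle=1$, $\langle\eta',[S_2]\rangle=0$; these exist and are unique by unimodularity. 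By construction the class $[\OM]-\lambda_1\xi'-\lambda_2\eta'$ pairs to zero with both $[S_1]$ and $[S_2]$, and since $\{[S_1],[S_2]\}$ is a basis and the pairing is perfect, it vanishes, giving $[\OM]=\lambda_1\xi'+\lambda_2\eta'$.

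Finally I would dispatch the two remaining assertions. Evaluating the identity $[\OM]=\lambda_1\xi'+\lambda_2\eta'$ on an arbitrary invariant sphere $S_j$ yields $a_j\lambda_1+b_j\lambda_2=\lambda_1\langle\xi',[S_j]\rangle+\lambda_2\langle\eta',[S_j]\rangle$ for all admissible $\lambda_1,\lambda_2$; because $\xi'$ and $\eta'$ are fixed classes independent of the parameters, comparing the coefficients of $\lambda_1$ and of $\lambda_2$ forces $\langle\xi',[S_j]\rangle=a_j$ and $\langle\eta',[S_j]\rangle=b_j$, which is exactly the data of Figure \ref{pullback}. For the basis statement, the matrix of values of $(\xi',\eta')$ on $([S_1],[S_2])$ is $\bigl(\begin{smallmatrix}1&1\\1&0\end{smallmatrix}\bigr)$, of determinant $-1$; since $\{[S_1],[S_2]\}$ is a unimodular basis of $H_2(\M,\mathbb Z)$, it follows that $\{\xi',\eta'\}$ is an integral basis of $H^2(\M,\mathbb Z)$.

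The genuinely geometric step, and the one I would treat most carefully, is the identification of $\langle[\OM],[S]\rangle$ with the lattice length of the corresponding edge of the moment graph, together with the check that each such length decomposes into $\lambda_1$ and $\lambda_2$ with integer coefficients. Everything else is formal: once the areas are in hand the construction of $\xi',\eta'$ and the verification that they form a basis reduce to the linear algebra of the unimodular evaluation pairing established in the first paragraph.
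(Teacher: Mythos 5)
Your proposal is correct and takes essentially the same route as the paper: both arguments rest on Corollary \ref{S1S2Basis}, the areas of the nine invariant spheres read off Figure \ref{fig:TolmanFirst}, and the unimodularity of the evaluation pairing between $H^2(\M,\mathbb Z)$ and $H_2(\M,\mathbb Z)$ to produce the integral basis. The only organizational difference is that you define $\xi',\eta'$ by their values on $[S_1],[S_2]$ alone and then recover the remaining entries of Figure \ref{pullback} by comparing coefficients of $\lambda_1,\lambda_2$, whereas the paper prescribes all nine values at once and deduces well-definedness from the match with $[\OM]$ on a generating set; your ordering handles that well-definedness point a bit more explicitly, but the mathematical content is the same.
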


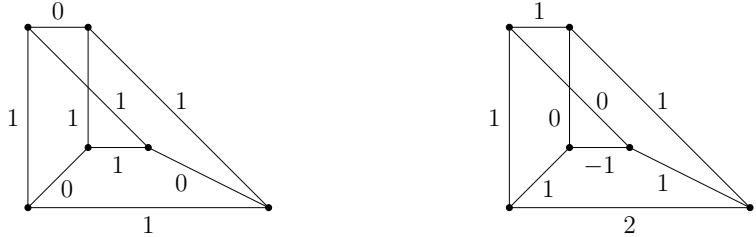
\begin{figure}[H]
\centering
\scalebox{.8}{
\begin{tikzpicture}

\draw[fill] (-3,0) circle [radius=0.05];
\draw[fill] (-3,3) circle [radius=0.05];
\draw[fill] (1,0) circle [radius=0.05];
\draw[fill] (-2,1) circle [radius=0.05];
\draw[fill] (-1,1) circle [radius=0.05];
\draw[fill] (-2,3) circle [radius=0.05];

\node [below] at (-1,0) {$1$};
\node [below] at (-1.5,1) {$1$};
\node [left] at (-3,1.5) {$1$};
\node [above] at (-2.5,3) {$0$};
\node [left] at (-2,1.5) {$1$};
\node [above right] at (-1.7,1.5) {$1$};
\node [above right] at (-0.7,1.5) {$1$};
\node [below right] at (-2.6,0.6) {$0$};
\node [below left] at (-0.2,0.7) {$0$};

\draw (-3,0) --(1,0) -- (-2,3) -- (-3,3) -- (-3,0);

\draw (-3,0) -- (-2,1) -- (-1,1) -- (-3,3);

\draw (-1,1) -- (1,0);

\draw(-2,1) -- (-2,3);

\draw[fill] (5,0) circle [radius=0.05];
\draw[fill] (5,3) circle [radius=0.05];
\draw[fill] (9,0) circle [radius=0.05];
\draw[fill] (6,1) circle [radius=0.05];
\draw[fill] (7,1) circle [radius=0.05];
\draw[fill] (6,3) circle [radius=0.05];

\draw (5,0) --(9,0) -- (6,3) -- (5,3) -- (5,0);

\draw (5,0) -- (6,1) -- (7,1) -- (5,3);

\draw(9,0) -- (7,1);

\draw(6,1) -- (6,3);

\node [below] at (7,0) {$2$};
\node [below] at (6.5,1) {$-1$};
\node [left] at (5,1.5) {$1$};
\node [above] at (5.5,3) {$1$};
\node [left] at (6,1.5) {$0$};
\node [above right] at (6.3,1.5) {$0$};
\node [above right] at (7.3,1.5) {$1$};
\node [below right] at (5.4,0.6) {$1$};
\node [below left] at (7.8,0.7) {$1$};

\end{tikzpicture} 
}

\caption{Left: $\eta'$ on the invariant spheres. Right: $\xi'$ on the invariant spheres.} \label{pullback}
\end{figure}

\begin{proof} The values of $\OM$ on nine $\mathbb T^2$-invariant spheres are given on Figure \ref{fig:TolmanFirst}, and one sees immediately that these values coincide with the values of the functional $\lambda_1\xi'+\lambda_2\eta'$. Since these nine spheres generate $H_2(\M,\mathbb Z)$, we see that $\xi'$ and $\eta'$ are indeed well defined elements of $H^2(\M,\mathbb R)$. 

By Corollary \ref{S1S2Basis}, the classes $[S_1]$ and $[S_2]$ form a basis of $H_2(\M,\mathbb Z)$. It follows from the definition of $\xi'$ and $\eta'$ that classes $\eta'$ and $\xi'-\eta'$ form the basis in $H^2(\M,\mathbb Z)$ dual to the basis $([S_1], [S_2])$. Hence $\xi', \eta'$ is a basis of $H^2(\M,\mathbb Z)$ as well. \end{proof}

\subsection{The topology of projective bundles}\label{sec:bundles}

In this section we recall several results on the topology of $\mathbb CP^1$-bundles over $\mathbb CP^2$. First, we fix some notations.

We will denote by $x$ the positive generator of $H^2(\mathbb CP^2,\mathbb Z)$. For any pair $k_1,k_2\in \mathbb Z$ there is precisely one topological complex rank two bundle $V$ over $\mathbb CP^2$ with $c_{1}(V)=k_1x$ and $c_{2}(V) = k_{2}x^2$,  see \cite[Section 6.1]{OSS}. 

Now, let $\PP(V)$ be the $\mathbb CP^1$-bundle associated to $V$ and let $p:\PP(V) \rightarrow \mathbb CP^2$ be the associated projection. We denote by $\xi$ the class $c_1(\mathcal{O}_{\mathbb{P}(V)}(1)) \in H^2(\PP(V),\Z)$ and by $\eta$ the class $p^*(x) \in H^{2}(\PP(V),\Z)$,  where $\mathcal{O}_{\mathbb{P}(V)}(1)$ is the tautological bundle. The following lemma sums up topological properties of $\PP(V)$,  it is  a rephrasing of \cite[Proposition 15]{OV} in the special case when the base manifold is $\mathbb CP^{2}$.

\begin{lemma} \label{topbundle} Suppose we have a rank $2$ bundle $V$ on $\mathbb CP^2$, and consider the associated projective bundle $\PP(V)$ with projection  $p:\PP(V) \rightarrow \mathbb CP^2$.   
Suppose that $c_{1}(V) = k_{1}x$ and $c_{2}(V) = k_{2}x^2$, then the following holds:
\begin{enumerate}

\item The integer cohomology ring of $\mathbb{P}(V)$ is $\mathbb Z[\eta,\xi]/(\eta^3, \xi^2+ k_{1} \eta \xi+ k_{2} \eta^2)$.

\item  We have the following equalities:
$$c_1(\mathbb{P}(V))= 2\xi+(3+k_1)\eta,\;\; c_1^3(\mathbb{P}(V))=2(27+k_1^2-4k_2).$$

\item $c_1(\mathbb{P}(V))$ is integrally divisible by $2$ if and only if $k_{1}$ is odd. Furthermore, let $J$ be any almost complex structure on $\mathbb{P}(V)$. Then $c_1(\mathbb{P}(V),J)$ is divisible by $2$ if and only if $k_{1}$ is odd.

\item Suppose $k_{1}=-1$. Then for any non-zero  $y\in H^2(\mathbb{P}(V),\mathbb Z)$ one has $y^2\ne 0$. Moreover, if in addition $k_{2}>0$, then for any non-zero $y,\,z\in H^2(\mathbb{P}(V),\mathbb Z)$ one has $y\cdot z\ne 0$.

\item The cubic intersection form on $H^2(\PP(V),\mathbb Z)$  (i.e. the form $F(\beta) = \int_{\PP(V)} \beta^3$) is the following:
$$F(a\eta+b\xi)=b(3a^2-3k_1ab+(k_1^2-k_2)b^2).$$

\end{enumerate}

\end{lemma}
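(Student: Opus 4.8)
The plan is to derive all five items from the projective bundle theorem (Leray--Hirsch), the relative Euler sequence, and the single normalisation $\int_{\mathbb{P}(V)}\xi\eta^2=1$. This last identity I would justify by noting that $\eta^2=p^*(x^2)$ is the pullback of the point class and $p_*\xi=1$, so $\int_{\mathbb{P}(V)}\xi\cdot p^*(x^2)=\int_{\mathbb{CP}^2}x^2=1$. For item (1), since $1$ and $\xi$ restrict to a basis of the cohomology of each fibre $\mathbb{CP}^1$, Leray--Hirsch makes $H^*(\mathbb{P}(V),\Z)$ a free module over $H^*(\mathbb{CP}^2,\Z)=\Z[x]/(x^3)$ on $\{1,\xi\}$. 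The lone relation is the Grothendieck relation, which in our notation reads $\xi^2+k_1\eta\xi+k_2\eta^2=0$, and together with $\eta^3=p^*(x^3)=0$ this yields the presentation $\Z[\eta,\xi]/(\eta^3,\ \xi^2+k_1\eta\xi+k_2\eta^2)$. In particular $\{\xi,\eta\}$ is a basis of $H^2$ and $\{\eta^2,\eta\xi\}$ a basis of $H^4$, facts I use repeatedly below.

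For the Chern class in item (2), I would use the tautological subbundle $\mathcal{O}_{\mathbb{P}(V)}(-1)\hookrightarrow p^*V$ with quotient $Q$. From $0\to\mathcal{O}_{\mathbb{P}(V)}(-1)\to p^*V\to Q\to 0$ one gets $c_1(Q)=k_1\eta+\xi$, and since the relative tangent bundle is $\mathcal{O}_{\mathbb{P}(V)}(1)\otimes Q$ we obtain $c_1(T_{\mathbb{P}(V)/\mathbb{CP}^2})=2\xi+k_1\eta$; adding $p^*c_1(T\mathbb{CP}^2)=3\eta$ yields $c_1(\mathbb{P}(V))=2\xi+(3+k_1)\eta$. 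Item (5) and the Chern number in (2) are then the same computation: reducing modulo the relations of (1) gives $\eta\xi^2=-k_1\xi\eta^2$ and $\xi^3=(k_1^2-k_2)\xi\eta^2$, so expanding $(a\eta+b\xi)^3$ and integrating produces $F(a\eta+b\xi)=b\bigl(3a^2-3k_1ab+(k_1^2-k_2)b^2\bigr)$; substituting $(a,b)=(3+k_1,2)$ and simplifying gives $c_1^3=2(27+k_1^2-4k_2)$. I would therefore prove (5) first, as it is exactly the expansion needed for (2).

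Item (3) is immediate: in the torsion-free group $H^2=\Z\langle\xi,\eta\rangle$ the class $2\xi+(3+k_1)\eta$ is divisible by $2$ iff $3+k_1$ is even, i.e. $k_1$ is odd. For a general almost complex $J$ I would invoke $c_1(\mathbb{P}(V),J)\equiv w_2(T\mathbb{P}(V))\pmod 2$; as $w_2$ is independent of $J$ and $H^2$ is torsion-free, divisibility by $2$ is equivalent to $w_2=0$, detected by the integrable structure and hence by the parity of $k_1$. For item (4), setting $k_1=-1$ and reducing $y^2$ for $y=a\eta+b\xi$ gives $y^2=(a^2-k_2b^2)\eta^2+(2ab+b^2)\eta\xi$; vanishing forces $b(2a+b)=0$ and $a^2=k_2b^2$, whose only integral solution is $a=b=0$ (the branch $b=-2a$ needs $a^2(1-4k_2)=0$, impossible for $a\neq0$ since $k_2\in\Z$). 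For the bilinear statement with $k_2>0$, writing $y\cdot z=0$ in coordinates and checking first that neither $\xi$-coefficient can vanish, the ratios $a_1/b_1$ and $a_2/b_2$ must be rational roots of $w^2+w+k_2$; since $k_2\geq1$ the discriminant $1-4k_2<0$, so no nonzero $y,z$ with $y\cdot z=0$ can exist.

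The main obstacle is not any single computation but fixing conventions consistently: one must pin down the meaning of $\mathcal{O}_{\mathbb{P}(V)}(1)$ and choose the matching forms of the Grothendieck relation and the relative Euler sequence, since a sign error in either flips all the $k_1$-dependent terms. Once (1) is established in the convention compatible with $c_1(\mathbb{P}(V))=2\xi+(3+k_1)\eta$, everything reduces to finite, mechanical manipulation modulo the two defining relations, together with the discriminant argument in (4).
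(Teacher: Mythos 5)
Your proposal is correct, and all the computations check out (the Grothendieck relation $\xi^2+k_1\eta\xi+k_2\eta^2=0$, the reductions $\eta\xi^2=-k_1\eta^2\xi$, $\xi^3=(k_1^2-k_2)\eta^2\xi$, the normalisation $\int_{\PP(V)}\xi\eta^2=1$ via the projection formula, and the discriminant arguments in (4) are all consistent with the stated formulas). The route is genuinely different in one respect: the paper does not derive the ring structure, the cubic form, or $c_1^3$ at all, but cites them directly from Okonek--Van de Ven (\cite[Proposition 15]{OV}), reserving actual arguments only for the $c_1$ computation (via the four-term sequence $0\to\mathcal{O}\to \pi^*V\otimes\mathcal{O}_{\PP(V)}(1)\to T\PP(V)\to\pi^*T\CP^2\to 0$, which is equivalent to your splitting into the relative Euler sequence plus the quotient bundle $Q$), for item (3) (where, like you, it reduces to $w_2$ via \cite[Proposition 8]{OV}), and for item (4) (where its ``not a full square'' / ``irreducible over $\QQ$'' arguments are exactly your discriminant computation $1-4k_2\neq 0$, resp.\ $<0$, phrased at the level of the polynomial $\xi^2-\eta\xi+k_2\eta^2$ rather than in coordinates). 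So your proof buys self-containedness: everything is verified from Leray--Hirsch, the tautological sequence, and one normalisation, whereas the paper's proof is shorter but opaque without \cite{OV} in hand. Your concern about conventions is also resolved correctly: the sub-bundle convention $\mathcal{O}_{\PP(V)}(-1)\hookrightarrow p^*V$ is the one matching the paper's relation in (1), its formula $c_1(\PP(V))=2\xi+(3+k_1)\eta$, and the cubic form in (5), and your three formulas are mutually consistent in that convention, which is the real content of the check.
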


\begin{proof}  

1) This statement follows from Leray-Hirsch Theorem, see the proof of  \cite[Proposition 15]{OV}.

2) Both formulas follow from \cite[Proposition 15]{OV}. The class $c_1(\mathbb{P}(V))$ is calculated from the long exact sequence stated in the proof of  \cite[Proposition 15]{OV}:
\begin{equation}\label{exactotal}
0\to {\cal O}_{\mathbb{P}(V)}\to \pi^* V\otimes{\cal O}_{\mathbb{P}(V)}(1) \to T(\mathbb{P}(V))\to  \pi^* T\mathbb CP^2 \to 0.
\end{equation}

As for $c_1^3(\mathbb{P}(V))$, one applies directly the expression of the cubic form $F_{\mathbb{P}(V)}$ on $H^2(\mathbb{P}(V))$ from \cite[Proposition 15]{OV} to get

$$c_1^3(\mathbb{P}(V))=F_{\mathbb{P}(V)}(c_1(V))=F_{\mathbb{P}(V)}((3+k_{1})\eta+2\xi)=$$
$$2(3(3+k_{1})^2-6k_{1}(3+k_{1})+4(k_1^2-k_2))=2(27+ k_{1}^2 -4k_{2}).$$

3) The first statement follows from the formula for $c_1(\mathbb{P}(V))$ from 2). 

To deduce the second statement from the first one we note that $c_1(\mathbb P(V), J)$ is divisible by $2$ if and only if its reduction modulo $2$ vanishes. At the same time, by  \cite[Proposition 8]{OV}, we have that
$$c_1(\mathbb P(V), J)\mod 2=w_2(\mathbb P(V))=c_1(\mathbb P(V))\mod 2.$$ 
Finally, we have already seen that $c_1(\mathbb P(V))\mod 2=0$ if and only if $k_1$ is odd.

4) We have $c_2(V)=k_{2}x^2$ with $d\in \mathbb Z$. Then to prove the first statement we note that the polynomial $\xi^2-\xi\eta +k_{2} \eta^2$ is not a full square for any $k_{2} \in \mathbb Z$. 

To prove the second statement we note that for $k_{2}>0$ the polynomial $\xi^2-\xi \eta+k_{2}\eta^2$ is irreducible over $\mathbb Q$.

5) This formula is given in \cite[Proposition 15]{OV}.
\end{proof}

From now on, to ease notation, for a vector bundle $V$ over $\mathbb CP^2$ instead of writing $c_1(V)=k_{1}x$ we will write $c_1(V)=k_{1}$ and instead of writing $c_2(V)=k_{2}x^2$ we will write $c_2(V)=k_{2}$. 

Here, we specialise the above lemma to a holomorphic vector bundle $E$ on $\mathbb CP^2$  such that $c_{1}(E)=-1$ and $c_{2}(E)=-1$. In addition we calculate the Chern and Pontryagin classes and characterise the zero-set of the cubic form on $H^{2}(\PP(E),\mathbb{Z})$.

\begin{corollary} \label{tolmantop}
Let  $E$ be a rank $2$ bundle on $\mathbb CP^2$, with $c_{1}=-1$ and $c_{2}=-1$. Let $p : \PP(E) \rightarrow \mathbb CP^2$ be the projection. 
Then the following holds: \begin{enumerate}
\item The cohomology ring of $\PP(E)$ is $\mathbb{Z}[\eta,\xi]/(\eta^{3},\xi^{2} -\eta\xi -\eta^{2})$.

\item The cubic form of $H^2(\PP(E),\mathbb Z)$ (i.e. the form $F(\beta) = \int_{\M} \beta^3$) is 
\begin{equation}\label{cubefirst}
F(a\eta+b\xi) = b(3a^2+3ab+2b^2).
\end{equation} 

\item $c_{1}(\PP(E))=2\eta+2\xi$, $c_{2}(\PP(E)) = 6\xi^2-6\eta^2$ and $p_{1}(\PP(E))) = 8 \eta^2$.

\item Suppose $y \in H^{2}(\PP(E),\Z)$ satisfies $y^3=0$, then $y = k\eta $ for some $k \in \mathbb{Z}$. 

\item $ c_{2}(\PP(E)).\eta =6$,  $c_{2}(\PP(E)).\xi=6$. 
%$$ c_{2}(\PP(E)).\eta  =  6 \qquad c_{2}(\PP(E)). \xi = 6. $$ 
\end{enumerate}
\end{corollary}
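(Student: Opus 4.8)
The plan is to obtain all five statements by specialising Lemma~\ref{topbundle} to the case $k_1=c_1(E)=-1$, $k_2=c_2(E)=-1$, supplemented by a short Chern-class computation for the parts that are not already covered there. Parts (1) and (2) are immediate substitutions: setting $k_1=k_2=-1$ in the ring presentation $\Z[\eta,\xi]/(\eta^3,\xi^2+k_1\eta\xi+k_2\eta^2)$ of Lemma~\ref{topbundle}(1) gives $\Z[\eta,\xi]/(\eta^3,\xi^2-\eta\xi-\eta^2)$, and substituting into the cubic form $F(a\eta+b\xi)=b(3a^2-3k_1ab+(k_1^2-k_2)b^2)$ of Lemma~\ref{topbundle}(5), using $-3k_1=3$ and $k_1^2-k_2=2$, yields $F(a\eta+b\xi)=b(3a^2+3ab+2b^2)$.

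For part (3), the class $c_1(\PP(E))=2\xi+(3+k_1)\eta=2\xi+2\eta$ is read off from Lemma~\ref{topbundle}(2). To compute $c_2$ I would apply the splitting principle to the four-term exact sequence \eqref{exactotal}. Multiplicativity of total Chern classes on that sequence gives
$$c(T\PP(V))=c(\pi^*V\otimes\OO_{\PP(V)}(1))\cdot c(\pi^*T\mathbb CP^2),$$
where $c(\pi^*T\mathbb CP^2)=(1+\eta)^3=1+3\eta+3\eta^2$ and, writing the Chern roots of $\pi^*V$ as $a_1,a_2$ with $a_1+a_2=k_1\eta$, $a_1a_2=k_2\eta^2$, the twisted bundle has $c_1=k_1\eta+2\xi$ and $c_2=\xi^2+k_1\eta\xi+k_2\eta^2$. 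Extracting the degree-four part produces $c_2(T\PP(E))=\xi^2+5\eta\xi-\eta^2$ after setting $k_1=k_2=-1$; reducing with the ring relation $\xi^2=\eta\xi+\eta^2$ rewrites this as $6\eta\xi=6\xi^2-6\eta^2$. Finally $p_1=c_1^2-2c_2$, and expanding $c_1^2=(2\xi+2\eta)^2$ and subtracting $2c_2$, again reduced modulo $\xi^2-\eta\xi-\eta^2$, collapses to $8\eta^2$.

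For part (4) I would write $y=a\eta+b\xi$ and invoke part (2): $y^3=F(y)=b(3a^2+3ab+2b^2)$. The binary quadratic form $3a^2+3ab+2b^2$ has discriminant $9-24=-15<0$, hence is positive definite and vanishes only at $(a,b)=(0,0)$. Thus $y^3=0$ forces $b=0$, i.e.\ $y=a\eta$ with $a\in\Z$. For part (5) I would first pin down the normalisation $\int_{\PP(E)}\eta^2\xi=1$: reducing the top-degree monomials via $\eta^3=0$ and $\xi^2=\eta\xi+\eta^2$ gives $\eta\xi^2=\eta^2\xi$ and $\xi^3=2\eta^2\xi$, and comparing $\int(a\eta+b\xi)^3$ with the cubic form of part (2) identifies $\eta^2\xi$ as the generator evaluating to $1$. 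Then $c_2\cdot\eta=6\eta\xi\cdot\eta=6\eta^2\xi=6$ and $c_2\cdot\xi=6\eta\xi\cdot\xi=6\eta\xi^2=6$.

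The only genuinely non-formal step is the computation of $c_2$ in part (3): one must handle the four-term Euler sequence correctly (splitting it into two short exact sequences to justify the multiplicative relation) and keep track of the $\OO_{\PP(E)}(1)$-twist of $\pi^*V$ through the splitting principle. Everything else is substitution, a discriminant sign, and monomial reduction in the ring from part (1), so I expect the $c_2$ calculation to be where any sign or bookkeeping error would enter.
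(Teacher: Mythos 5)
Your proposal is correct and follows essentially the same route as the paper: parts (1), (2) by substitution into Lemma \ref{topbundle}, part (3) via the Euler sequence \eqref{exactotal} and the tensor-product Chern class formula (your splitting-principle phrasing is the same computation), part (4) from definiteness of the quadratic form $3a^2+3ab+2b^2$, and part (5) by monomial reduction in the ring. Your write-up is in fact slightly more careful than the paper's, which leaves the normalisation $\int_{\PP(E)}\eta^2\xi=1$ and the part (5) computation implicit.
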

\begin{proof}
1) This follows by applying Lemma \ref{topbundle} 1)  to $E$. 

2) This follows by applying Lemma \ref{topbundle} 5)  to $E$.

3) Applying the exact sequence (\ref{exactotal}) to $\PP(E)$ we get
%There is a short exact sequence of vector bundles (stated in the proof of  \cite[Proposition 15]{OV}):
%$$0\to {\cal O}_{\mathbb{P}(E)}\to \pi^* V\otimes{\cal O}_{\mathbb{P}(E)}(1) \to T(\mathbb{P}(E))\to  \pi^* T\mathbb CP^2 \to 0.$$
%Hence the total Chern classes satisfy, 
$$c(T\mathbb{P}(E)) = c(p^*(T\mathbb{P}^2)) c(p^*(E) \otimes \mathcal{O}(1)).$$

Note that $ c(p^*(T\mathbb{P}^2)) = 1 + 3\eta + 3\eta^2$. We recall the formula for the total Chern class of a tensor product:  $c(p^*(E) \otimes \mathcal{O}_{\PP(E)}(1)) =  1 + c_{1}(p^*(E)) + 2 c_{1}(\mathcal{O}_{\PP(E)}(1))  + c_{2}(p^*(E)) + c_{1}(p^*(E)) c_{1}(\mathcal{O}_{\PP(E)}(1)) + c_{1}(\mathcal{O}_{\PP(E)}(1)) ^2$. This implies that:
$$ c(p^*(E) \otimes \mathcal{O}_{\PP(E}(1)) = 1 - \eta + 2 \xi -\eta^2 - \eta \xi + \xi^2.$$

So, using the relations in the cohomology ring computed in 1), we obtain $$  c(T\PP(E)) =  1 + (2\xi + 2\eta) + (6 \xi^2 - 6\eta^2) + 6 \xi^2\eta.$$

Hence 
$$c_{1}(\PP(E)) = 2\eta + 2\xi,\;\;c_{2}(\PP(E)) = 6 \xi^2 - 6\eta^2.$$

By  \cite[Proposition 8]{OV}, the first Pontryagin class of an almost complex $6$-manifold satisfies $p_{1}=c_1^2-2c_{2}$, so using the above 
$$p_{1}(\PP(E))=c_1(\PP(E))^2-2c_{2}(\PP(E))=8\eta^2.$$

4) To prove this, we use $2)$ and note that the equation $(3a^2 + 3ab + 2b^2)=0$ has no non-zero real solutions, which means that $F(a\eta+b\xi)\ne 0$ unless $b=0$.

5) This follows from 3) and 1), with a little computation.
\end{proof}

%\subsection{A classification result for $6$-manifolds}
%
%We will now state a theorem of Jupp \cite[Theorem 1.1]{OV}, restricting to the case of smooth manifolds with vanishing $b_{3}$. Since we only consider smooth manifolds, and our manifolds have a unique smooth structure  (see \cite[Theorem 1.1]{OV}), instead of stating this theorem for homeomorphims, we can state it for diffeomorphisms.
%
%\begin{theorem} \label{jupp} \cite[Theorem 1.1]{OV}
%Suppose that $M_{1},M_{2}$ are oriented, $1$-connected, closed and smooth $6$-manifolds with torsion free homology and $b_{3}(M_{i}) = 0$ for $i=1,2$. Suppose there is a group isomorphism $Q: H^{2}(M_{2},\Z) \rightarrow H^{2}(M_{1},\Z)$ preserving the following: \begin{enumerate} 
%
%\item The cubic intersection form $F : H^{2}(M_i,\Z) \times H^{2}(M_i,\Z)  \times H^{2}(M_i,\Z) \rightarrow \Z$.
%\item The second Steifel-Whitney class $w_{2}(M_{i}) \in H^{2}(M_{i},\Z)/2H^{2}(M_{i},\Z)$.
%\item The first Pontryagin class $p_{1}(M_{i}) \in H^{2}(M_{i},\Z)^*$. 
%\end{enumerate}
%
%Then, there is a diffeomorphism $\Phi : M_{1} \rightarrow M_{2} $, such that $\Phi^*=Q$. 
%\end{theorem}
% 

%\subsection{On uniqueness of Tolman's manifold}\label{uniquesection}
%
%In this subsection we prove Lemma \ref{uniquelemma}.
%\begin{proof}[Proof of Lemma \ref{uniquelemma}] Note that by our assumptions the critical levels of Hamiltonians $\phi'^*(y)$ and $\phi''^*(y)$ are $0, \lambda_1, \lambda_1+\lambda_2$. At each of these critical level sets the fixed components are spheres 
%
%\end{proof}

\section{Symplectic forms $\omega_{\lambda_{1},\lambda_{2}}$ and topology of $\M$}

In this section we prove several results on the topology of $\M$ needed for the proof of Theorem \ref{newmain}. First, in Section \ref{sec:cubicform} we calculate the cubic intersection form on $H^2(\M,\mathbb Z)$ together with the Chern classes of $\M$. Using this result, in Section \ref{sec:Jupp} one applies a theorem of Jupp, 
%that classifies diffeomorphism types of certain $6$-manifolds, 
to prove that there is a diffeomorphism from $\M$ to $\mathbb P(E)$ that preserves the first Chern class.

\subsection{Chern classes and the cubic intersection form of $\M$}\label{sec:cubicform}

\begin{lemma} \label{othbasis} Let $\M$ be Tolman's manifold, and let $\eta'$ and $\xi'$ be the cohomology classes on $\M$ defined by the evaluation on the invariant spheres as in Lemma \ref{baseinH2} and Figure \ref{pullback}. Then the following holds: \begin{enumerate}
%\item The classes $\eta'$ and $\xi'$ form an integral basis for $H^{2}(\M,\Z) \cong \Z^2$. 
\item $c_{1}(\M) = 2\eta'+ 2 \xi'$. 
\item $c_{2}(\M). \eta' = c_{2}(\M).\xi' = 6.$ \end{enumerate}\end{lemma}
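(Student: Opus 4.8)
The plan is to reduce both statements to evaluations on the nine $\mathbb{T}^2$-invariant spheres, whose values are recorded in Figures \ref{plain figure} and \ref{pullback}, and then read off the answer.

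First I would isolate the guiding principle behind part 1: a class in $H^2(\M,\mathbb{Z})$ is determined by its values on $H_2(\M,\mathbb{Z})$. Indeed, by Theorem \ref{standardMorse} the groups $H_*(\M,\mathbb{Z})$ are free with homology concentrated in even degrees, so $H_1(\M,\mathbb{Z})=0$ and universal coefficients give $H^2(\M,\mathbb{Z})\cong \mathrm{Hom}(H_2(\M,\mathbb{Z}),\mathbb{Z})$; equivalently, by Corollary \ref{S1S2Basis} the spheres $[S_1],[S_2]$ already form a basis of $H_2(\M,\mathbb{Z})$, so evaluation against them pins down any class. Hence to prove $c_1(\M)=2\eta'+2\xi'$ it suffices to check that the two sides agree on each invariant sphere. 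The left-hand values are those of Lemma \ref{chernclass}(1) displayed in Figure \ref{plain figure} (left), and the right-hand values are twice the sum of the $\eta'$- and $\xi'$-labels from Figure \ref{pullback}; a direct edge-by-edge comparison — for instance $6=2\cdot 1+2\cdot 2$ on the bottom edge and $0=2\cdot 1+2\cdot(-1)$ on the interior horizontal edge — confirms the equality on all nine spheres, and in particular on $S_1,S_2$.

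For part 2 the key input is Lemma \ref{chernclass}(3), which states that $c_2(\M)$ is Poincar\'e dual to the sum $\sum_i C_i$ of the nine invariant spheres. Using $\int_\M \mathrm{PD}(C)\cup\alpha=\langle\alpha,[C]\rangle$, I would then write
$$c_2(\M)\cdot \eta'=\sum_i \langle \eta',[C_i]\rangle, \qquad c_2(\M)\cdot \xi'=\sum_i \langle \xi',[C_i]\rangle,$$
so that each product is simply the sum of the corresponding labels in Figure \ref{pullback}. Summing the $\eta'$-labels gives $1+1+1+0+1+1+1+0+0=6$ and summing the $\xi'$-labels gives $2+(-1)+1+1+0+0+1+1+1=6$, which is the claim.

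The argument is essentially bookkeeping once Lemma \ref{chernclass} is available, so there is no analytic obstacle. The only point requiring care is the identification of edges across the different figures: one must match each invariant sphere to the same edge of the GKM graph in Figures \ref{plain figure} and \ref{pullback} before comparing labels, since those figures are drawn in shifted positions. I expect this cross-referencing, rather than any conceptual step, to be the part most prone to error.
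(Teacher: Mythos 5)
Your proposal is correct and takes essentially the same route as the paper: part 1 reduces to comparing $c_1(\M)$ with $2\eta'+2\xi'$ by evaluation against $H_2(\M,\mathbb{Z})$ (the paper checks only the basis $\{[S_1],[S_2]\}$ from Corollary \ref{S1S2Basis}, you check all nine spheres, which subsumes that), and part 2 uses Lemma \ref{chernclass}(3) and sums the labels of Figure \ref{pullback} exactly as the paper does. The evaluations and sums you give are correct, so no gap remains.
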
 \begin{proof}

%1) The value of $\eta'$ and $\xi'$ on the invariant spheres is shown in Figure \ref{pullback}. Consider the integral basis $[S_{1}],[S_{2}]$ of $H_{2}(\M,\Z)$ constructed in Lemma \ref{intbasis}. Then $\xi'$, $\eta'$ are defined by $\eta'.[S_{1}] = 1$, $\eta'.[S_{2}] = 0$, and $\xi'.[S_{1}] = 1$, $\xi'.[S_{2}] = 1$. Writing $\xi',\eta'$ in terms of the dual basis for $S_{1},S_{2}$, one may check that the corresponding matrix is in $SL_{2}(\Z)$, and so $\eta',\xi'$ are an integral basis of $H^{2}(\M,\mathbb{Z})$.

1) It is enough to check that classes $c_{1}(\M)$ and $2\eta'+ 2 \xi'$ have the same values  on the basis $\{[S_{1}],[S_{2}]\}$. This follows from  Lemma \ref{chernclass}.

2) By Lemma \ref{chernclass}.3), we have that $c_{2}(\M)$ is Poincar\'{e} dual to the sum of the invariant spheres. One may see from Figure \ref{pullback} that summing the value of $\eta'$ and  $\xi'$ over the invariant spheres yields $6$.
\end{proof}

Next, we once again consider a holomorphic vector bundle $E$ over $\PP^2$ with $c_{1}(E)=-1$ and $c_{2}(E)=-1$. We let $p : \PP(E) \rightarrow \mathbb CP^2$ be the projection. Then we recall from Corollary \ref{tolmantop} a natural basis $\{\eta,\xi\}$ for $H^{2}(\M,\mathbb{Z})$, where $\eta = p^{*}(x)$ and $x$ is the hyperplane class of $\mathbb CP^2$ and $\xi$  the first Chern class of the tautological bundle over $\PP(E)$.

\begin{proposition}  \label{intcube}
Let $\omega_{\lambda_{1},\lambda_{2}}$ be the symplectic form constructed in Theorem \ref{toltheorem}, and $\xi',\eta' \in H^{2}(\M,\Z)$ be the integral basis constructed in Lemma \ref{baseinH2}. Then we have that
\begin{equation} \label{cube}  \int_{\M} [\omega_{\lambda_{1},\lambda_{2}}]^3 = \int_{\M} (\lambda_{1} \xi' + \lambda_{2} \eta')^3  =2\lambda_{1}^3 + 3\lambda_{1}^2\lambda_{2} + 3\lambda_{1}\lambda_{2}^2. \end{equation}
\end{proposition}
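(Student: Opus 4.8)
The first equality of \eqref{cube} is immediate from Lemma \ref{baseinH2}, which identifies $[\OM]$ with $\lambda_1\xi'+\lambda_2\eta'$; so the entire content is the evaluation of the single cubic number $\int_\M\OM^3$. The plan is to compute it by equivariant localisation, in exactly the way $c_1(\M)^3$ was found in Lemma \ref{chernclass}, but applied to the equivariant symplectic class instead of the equivariant first Chern class. Once $\int_\M\OM^3$ is known as a polynomial in $\lambda_1,\lambda_2$, expanding $(\lambda_1\xi'+\lambda_2\eta')^3$ and comparing coefficients simultaneously records the four intersection numbers $(\xi')^3,\ (\xi')^2\eta',\ \xi'(\eta')^2,\ (\eta')^3$, which is the form in which the computation is used later.

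Concretely, I would fix a subcircle $S^1\subset\mathbb{T}^2$ with isolated fixed points; the choice $(a,b)=(2,1)$ already used in Lemma \ref{chernclass} works, since with these values none of the weights in Figure \ref{figure} vanishes. Its Hamiltonian is $H=\phi^*(2x+y)$. Localisation of the equivariantly closed form $e^{[\OM]+uH}$ then expresses $\int_\M\OM^3$ as
$$\int_\M\OM^3=\pm\sum_{p\in\M^{S^1}}\frac{H(p)^3}{w_1(p)w_2(p)w_3(p)},$$
where $w_1(p),w_2(p),w_3(p)$ are the weights of the $S^1$-action at $p$ and $H(p)$ is the moment value there. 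The overall sign is an orientation factor: it is the factor that cancels against the equivariant Euler class in the $c_1^3$-computation but survives here, and in any case it is pinned down by the positivity of the symplectic volume $\tfrac16\int_\M\OM^3$. I would also note that the lower sums $\sum_p H(p)^k/\prod_j w_j(p)$ vanish for $k=0,1,2$, which both explains why the right-hand side is independent of the normalisation of $H$ and serves as a free correctness check.

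It then remains only to feed in data already tabulated in the paper: the six moment values $0,\ \lambda_1+\lambda_2,\ 3\lambda_1+\lambda_2,\ 4\lambda_1+2\lambda_2,\ \lambda_1+2\lambda_2,\ 3\lambda_1$ read off from the fixed-point images in Theorem \ref{toltheorem}, and the weights obtained from Figure \ref{figure} by setting $a=2,\ b=1$. Substituting and collapsing the six terms yields $2\lambda_1^3+3\lambda_1^2\lambda_2+3\lambda_1\lambda_2^2$ (positivity selecting the sign), which is exactly \eqref{cube}. As consistency checks, the implied intersection numbers $(\eta')^3=0$, $(\eta')^2\xi'=\eta'(\xi')^2=1$, $(\xi')^3=2$ reproduce $c_1(\M)^3=8(\eta'+\xi')^3=64$ of Lemma \ref{chernclass}, and they coincide with the cubic form of $\PP(E)$ in Corollary \ref{tolmantop} — the agreement that will feed into the diffeomorphism statement via Jupp's theorem.

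The main obstacle is not conceptual but bookkeeping: keeping the signs of the weights in Figure \ref{figure} consistent with the orientation used to define $\int_\M$, and correctly identifying the single orientation factor in the localisation formula. This is precisely the point I expect to be error-prone, which is why I would retain both the positivity of the symplectic volume and the independent cross-check against $c_1^3=64$ to fix it.
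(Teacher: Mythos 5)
Your proposal is correct and follows essentially the same route as the paper: the paper also computes $\int_{\M}[\OM]^3$ by the Duistermaat--Heckman formula (which is exactly your localisation of $e^{[\OM]+uH}$) for the subcircle $(a,b)=(2,1)$ with Hamiltonian $\phi^*(2x+y)$, using the same six moment values and weight products from Figure \ref{figure}, with the overall sign handled by the same kind of orientation factor. Your extra consistency checks (vanishing of the lower moment sums, agreement with $c_1(\M)^3=64$ and with the cubic form of $\PP(E)$) are sound but not part of the paper's argument.
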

\begin{proof}
By Lemma \ref{baseinH2}, $\xi'$ and $\eta'$ form an integral basis of $H^{2}(\M,\Z)$. By Theorem \ref{toltheorem} and Lemma \ref{baseinH2}, there is a symplectic form $\omega_{\lambda_{1},\lambda_{2}}$ on $\M$, with cohomology class $\lambda_{1} \xi' + \lambda_{2}\eta'$.
Our first task will be to compute the cubic intersection form on $H^{2}(\M,\R)$. Namely, we will  compute the integral $\int_{\M} [\omega_{\lambda_{1},\lambda_{2}}]^3 = \int_{\M} (\lambda_{1} \xi' + \lambda_{2} \eta')^3 $ using the Duistermaat-Heckman formula  \cite[Theorem 5.55]{MS}. 

Let's take the Hamiltonian $S^{1}$-action on $\M$ corresponding to the linear function $L(x,y) = 2x +y$, i.e. composing the moment map of $\M $ with $L$. The weights for this action are shown in Figure \ref{figure} with $a=2$, $b=1$. Hence, the fixed point data associated to this manifold is as follows:
\bigskip

\begin{tabular}{l*{6}{c}r}
Point              & $P_{1}$ & $P_{2}$ & $P_{3}$ & $P_{4}$ & $P_{5}$ & $P_{6}$ \\
\hline
Co-ordinates & $(0,0)$ &  $(\lambda_1,\lambda_1)$ &  $(\lambda_{2},\lambda_{1})$ &  $(0,\lambda_1+\lambda_2 )$ &  $(\lambda_{1},\lambda_{1}+\lambda_{2})$ &  $(2\lambda_{1}+\lambda_{2},0)$  \\
$H(p_{i})$          & $0$ &$ 3\lambda_1$ &$ \lambda_{1} + 2\lambda_{2}$& $\lambda_1+\lambda_2 $& $3\lambda_{1}+\lambda_{2}$ & $4\lambda_{1}+2\lambda_{2} $\\
$\prod(w_{j})$           & 6 & -6 & 6 & -2 &  2 & -6   \\
\end{tabular}

\bigskip
Here $\prod(w_{j})$ denotes the product of the weights at the fixed point $P_{i}$, and $H$ is the Hamiltonian of the $S^{1}$-action defined above. By the Duistermaat-Heckman formula we have 
$$(-1)^3 \int_{\M} [\omega_{\lambda_{1},\lambda_{2}}]^{3} = \frac{(3\lambda_{1})^3}{-6} + \frac{(\lambda_1+2\lambda_2)^3}{6} + \frac{(\lambda_1+\lambda_2)^3}{-2} + \frac{(3\lambda_1+\lambda_{2})^3}{2} +\frac{(4\lambda_{1} + 2\lambda_{2})^3}{-6} .$$
Simplifying yields the required formula.
\end{proof}

\subsection{The diffeomorphism type of $\M$}\label{sec:Jupp}
In this section we show that $\M$ is diffeomorphic to a projective bundle $\PP(E)$ via a diffeomorphism preserving the almost complex structure, where $E$ is a rank $2$ vector bundle of $\mathbb CP^2$ with $c_{1}(E)=-1$, $c_{2}(E)=-1$. 

The following theorem was proven in \cite{GKZ}. Namely, it was proven in \cite{GKZ}  that Tolman's manifold is diffeomorphic to Eschenburg's manifold. On the other hand,  Eschenburg's manifold is diffeomorphic to the projectivisation of a rank $2$ bundle over $\mathbb CP^2$ \cite[Theorem 2]{E}. Furthermore the vector bundle can be taken with $c_1=1, c_2=-1$  \cite{Ziller}. Note finally, that a projectivisation of a bundle with $c_1=1, c_2=-1$ is diffeomorphic to the projectivisation of a bundle with $c_1=-1, c_2=-1$ since the latter is obtained from the former by tensoring with a line bundle.   We reprove this result calculating the topological invariants in a different way, we believe our approach may find other applications.

\begin{theorem} \cite{GKZ} \label{diffeothm}
Let $\M$ be Tolman's manifold. Then there is a diffeomorphism $\Phi : \PP(E) \rightarrow \M$. Furthermore  $\Phi^{*}(\eta) = \eta'$, $\Phi^{*}(\xi) = \xi'$ and $\Phi$ respects the homotopy class of almost complex structure. 
\end{theorem}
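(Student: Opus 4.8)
The goal is to produce a diffeomorphism $\Phi:\PP(E)\to \M$ pulling $\eta,\xi$ back to $\eta',\xi'$ and respecting the homotopy class of the almost complex structure. The natural tool is Jupp's classification theorem for simply connected, smooth, closed, spin (or more generally, with controlled $w_2$) $6$-manifolds with torsion-free homology: such manifolds are classified up to diffeomorphism by the isomorphism class of the triple consisting of the cohomology ring $H^*(\,\cdot\,,\Z)$, the first Pontryagin class $p_1$, and the second Stiefel--Whitney class $w_2$, together with the cubic intersection form (plus the linear form $p_1(\,\cdot\,)$). So the plan is to verify that $\M$ and $\PP(E)$ agree on all of these invariants via an isomorphism of cohomology rings, and then invoke Jupp to upgrade this to a diffeomorphism; finally one checks that the diffeomorphism can be arranged to match the distinguished classes and the almost complex structure.

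First I would record the invariants of $\PP(E)$ already assembled in Corollary \ref{tolmantop}: the cohomology ring $\Z[\eta,\xi]/(\eta^3,\xi^2-\eta\xi-\eta^2)$, the cubic form $F(a\eta+b\xi)=b(3a^2+3ab+2b^2)$, $c_1=2\eta+2\xi$, $c_2\cdot\eta=c_2\cdot\xi=6$, and $p_1=8\eta^2$. Then I would assemble the matching invariants for $\M$ in the basis $\{\eta',\xi'\}$: Lemma \ref{othbasis} gives $c_1(\M)=2\eta'+2\xi'$ and $c_2(\M)\cdot\eta'=c_2(\M)\cdot\xi'=6$, and Proposition \ref{intcube} gives the cubic form $\int_\M(a\eta'+b\xi')^3$, which after setting $\lambda_1\mapsto b,\lambda_2\mapsto a$ is exactly $2b^3+3b^2a+3ba^2=b(3a^2+3ab+2b^2)$ --- identical to $F$. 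Since both manifolds have $H^2\cong\Z^2$ with a cubic form uniquely determining the ring structure (via the relation $\xi^2=\eta\xi+\eta^2$, recoverable from $F$), the linear map sending $\eta'\mapsto\eta,\xi'\mapsto\xi$ is a ring isomorphism. Because $c_1$ is odd in the sense of Lemma \ref{topbundle}.3 (here $k_1=-1$), both manifolds have $w_2=c_1\bmod 2\ne 0$ identically, so they are non-spin but with matching $w_2$; and $p_1$ matches once I verify $p_1(\M)=8\eta'^2$, which follows from $p_1=c_1^2-2c_2$ together with the $c_1,c_2$ data just listed.

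With all invariants matched by a single cohomology-ring isomorphism carrying $(\eta',\xi',p_1,w_2,F)$ to $(\eta,\xi,p_1,w_2,F)$, Jupp's theorem yields a diffeomorphism $\Phi:\PP(E)\to\M$ realizing it, so $\Phi^*\eta=\eta'$ and $\Phi^*\xi=\xi'$. The final point is the homotopy class of the almost complex structure. On a closed almost complex $6$-manifold the homotopy class of a stable almost complex structure is determined by $(c_1,c_2,c_3)$ (equivalently by $c_1$, $p_1$, and the Euler class), and here $c_1,c_2$ agree under $\Phi^*$ while $c_3=e$ is topological; so $\Phi$ pulls back the Chern data of $\PP(E)$ to that of $\M$, forcing the tamed almost complex structure $J_{\cal T}$ and the complex structure on $\PP(E)$ into the same homotopy class. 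I expect the main obstacle to be precisely this last step: carefully stating which $6$-manifold invariants Jupp's theorem uses (including the mod-$2$ data and the $p_1$ linear functional), checking the hypotheses of Jupp apply (simply connected, torsion-free cohomology --- both clear for $\PP(E)$ and for $\M$ by Theorem \ref{standardMorse}), and arguing that matching $c_1,c_2,c_3$ suffices to pin down the homotopy class of the almost complex structure rather than merely the stable tangent bundle.
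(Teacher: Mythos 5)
Your proposal follows essentially the same route as the paper: define $Q$ by $\eta'\mapsto\eta$, $\xi'\mapsto\xi$, check that it preserves the cubic form (Proposition \ref{intcube} versus Corollary \ref{tolmantop}), $w_2$ and $p_1$ (via $p_1=c_1^2-2c_2$ and the $c_1$, $c_2$ computations of Lemma \ref{othbasis} and Corollary \ref{tolmantop}), invoke Jupp's theorem (Theorem \ref{jupp}), and finally use the fact that on such $6$-manifolds the homotopy class of an almost complex structure is pinned down by $c_1$. That last point, which you flag as the main potential obstacle and approach through stable structures and $(c_1,c_2,c_3)$, is exactly what \cite[Proposition 8]{OV} (Wall's classification in dimension $6$) provides: homotopy classes of almost complex structures are classified by the integral lift $c_1$ of $w_2$, so $Q(c_1(\M))=c_1(\PP(E))$ suffices with no stable-versus-unstable subtlety.

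One factual error should be corrected: you read Lemma \ref{topbundle}.3 backwards. It says $c_1(\PP(V))$ is divisible by $2$ if and only if $k_1$ is \emph{odd}; since here $k_1=c_1(E)=-1$ is odd, $c_1(\PP(E))=2\eta+2\xi$ is even, so $w_2=c_1\bmod 2=0$ and both manifolds are \emph{spin} --- not non-spin with $w_2\neq 0$, as you assert. The slip is harmless for applying Theorem \ref{jupp}, since all that is needed is that $Q$ preserves $w_2$, and this follows from $Q(c_1(\M))=c_1(\PP(E))$ whether or not the class vanishes; the paper simply observes that both $w_2$'s vanish by \cite[Proposition 8]{OV}. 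But the parity is not incidental in this paper: the proof of Theorem \ref{kahlerbundle} rests on the Schreieder--Tasin classification \cite{ST}, which applies to \emph{spin} $6$-manifolds, so it is worth getting this right.
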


This theorem will be proven using a result of Jupp \cite[Theorem 1.1]{OV}, stated below for the partial case of smooth manifolds with vanishing $b_{3}$. 

\begin{theorem} \label{jupp} \cite[Theorem 1.1]{OV}
Suppose that $M_{1},M_{2}$ are oriented, $1$-connected, closed and smooth $6$-manifolds with torsion free homology and $b_{3}(M_{i}) = 0$ for $i=1,2$. Suppose there is a group isomorphism $Q: H^{2}(M_{2},\Z) \rightarrow H^{2}(M_{1},\Z)$ preserving the following: \begin{enumerate} 

\item The cubic intersection form $F : H^{2}(M_i,\Z) \times H^{2}(M_i,\Z)  \times H^{2}(M_i,\Z) \rightarrow \Z$.
\item The second Steifel-Whitney class $w_{2}(M_{i}) \in H^{2}(M_{i},\Z)/2H^{2}(M_{i},\Z)$.
\item The first Pontryagin class $p_{1}(M_{i}) \in H^{2}(M_{i},\Z)^*$. 
\end{enumerate}

Then, there is a diffeomorphism $\Phi : M_{1} \rightarrow M_{2} $, such that $\Phi^*=Q$. 
\end{theorem}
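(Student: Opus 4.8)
The plan is to prove this as a classification statement for simply-connected smooth $6$-manifolds, following the handle-theoretic method of Wall (\emph{Classification problems in differential topology V}) and its refinement by Jupp, specialized to the torsion-free, $b_3=0$ case. The goal is to show that the triple $(F, w_2, p_1)$ on $H^2$ is a \emph{complete} diffeomorphism invariant, and that any isomorphism $Q$ preserving these three data is induced by a diffeomorphism $\Phi$ with $\Phi^*=Q$.

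First I would fix a handle decomposition of each $M_i$. Since $M_i$ is $1$-connected with torsion-free homology and $b_3(M_i)=0$, Morse theory together with Smale's handle-trading produces a decomposition with handles only in even index: one $0$-handle, $b_2$ two-handles, $b_2$ four-handles (using $b_4=b_2$ by Poincaré duality), and one $6$-handle. The two-handles are attached to $\partial D^6=S^5$ along a framed link of circles; this link is isotopically trivial since $S^5$ is $4$-connected, so the only datum is a framing in $\pi_1(SO(4))\cong\Z/2$ for each handle, and these framings are recorded precisely by the values of $w_2(M_i)$ on the corresponding generators of $H_2$. Hence the $2$-handlebody $W_i$, and in particular its boundary $\partial W_i$, is determined up to diffeomorphism by $H_2$ together with $w_2$.

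Next I would analyze the top handles. The four-handles and the $6$-handle are glued to $\partial W_i$ along framed $3$-spheres, with cores homologically dual to the cocores of the two-handles by Poincaré duality. The way these $4$-cycles link the $2$-cocycles is encoded exactly by the trilinear form $F$, while the normal framings of the belt spheres are governed by $p_1$ through $\pi_3(SO)\cong\Z$, subject to the standard congruences coupling $F$, $p_1$ and $w_2$ in dimension six. The only remaining freedom would live in $H_3$ and corresponds to connected summing with copies of $S^3\times S^3$; the hypothesis $b_3=0$ removes this ambiguity entirely. Consequently the diffeomorphism type of $M_i$ becomes a function of $(H^2, F, w_2, p_1)$ alone.

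Finally, given $Q$ preserving $F$, $w_2$ and $p_1$, I would realize it by a diffeomorphism, built stage by stage: match the $0$- and $2$-handles using $Q$ on $H_2$ and the agreement of $w_2$, extend the partial diffeomorphism over $\partial W_i$, then over the $4$-handles using that $F$ and $p_1$ agree, and finally cap off with the $6$-handle. Any discrepancy at a stage is an element of a bordism or structure group that is killed by running the surgery exact sequence for $M_2$, whose obstructions vanish because the middle-dimensional intersection data is pinned down by $F$ and the framings by $p_1$ and $w_2$. The main obstacle is precisely this extension over the top handles: one must verify that preservation of all three invariants \emph{simultaneously} kills every obstruction to extending the diffeomorphism over the $4$- and $6$-handles, which is where the special arithmetic of $6$-manifolds — and the vanishing of the relevant smoothing and surgery obstructions once $b_3=0$ — is indispensable.
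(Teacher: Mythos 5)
The paper does not actually prove this statement: it is Jupp's classification theorem as quoted from Okonek--Van de Ven, and the paper's entire proof consists of the citation \cite[Theorem 1.1]{OV} plus a single observation --- that the homeomorphism furnished by the cited theorem may be taken to be a diffeomorphism because $6$-manifolds of this type have a unique smooth structure. You instead set out to reprove the classification from scratch by the Wall--Jupp handlebody method. That is a legitimate route in principle (it is what the original papers do), but your sketch has genuine gaps at exactly the places where Wall and Jupp had to work hardest, so as written it does not constitute a proof.

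Concretely: (i) your treatment of the upper handles fails as stated. A $4$-handle on a $6$-manifold is attached along an embedding $S^{3}\times D^{2}\hookrightarrow \partial W^{5}$, a \emph{codimension-two} problem, so neither general position nor the connectivity of the ambient space trivializes the attaching data; moreover the framing group for such a handle is $\pi_{3}(SO(2))=0$, not $\pi_{3}(SO)\cong\Z$, so $p_{1}$ cannot enter ``through $\pi_{3}(SO)$'' attached to the $4$-handles as you assert. In the actual arguments $p_{1}$ is pinned down differently (via normal bundle data of embedded $2$-spheres and an obstruction-theoretic comparison), and controlling the attaching spheres requires Haefliger-type embedding and unknotting results together with an analysis of the simply connected $5$-manifold $\partial W$ and its self-diffeomorphisms. (ii) Your closing step --- that any discrepancy in extending the partial diffeomorphism over the $4$- and $6$-handles ``is killed by running the surgery exact sequence'' --- is an assertion, not an argument: realizing a given isomorphism $Q$ by a diffeomorphism with $\Phi^{*}=Q$ is precisely the hard content of the theorem, and nothing in the sketch identifies the relevant obstruction groups or shows they vanish; in the smooth-versus-topological comparison one also needs smoothing theory, which is where the paper's one substantive remark (uniqueness of smooth structure) lives. (iii) Even the step you treat as immediate --- that the $2$-handlebody is determined up to diffeomorphism by $(H_{2},w_{2})$ --- is a lemma requiring proof. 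Since the intended proof here is a one-line citation, the economical fix is to do the same: invoke \cite[Theorem 1.1]{OV} for the homeomorphism realizing $Q$, and add the uniqueness-of-smooth-structure remark to upgrade it to a diffeomorphism.
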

\begin{proof} This result is proven \cite[Theorem 1.1]{OV}. Indeed, since we only consider smooth manifolds, and our manifolds have a unique smooth structure  (see \cite[Theorem 1.1]{OV}), hence the homeomorphsim guaranteed by \cite[Theorem 1.1]{OV} can be chosen to be a diffeomorphism.
\end{proof}
 
\begin{proof}[Proof of Theorem \ref{diffeothm}]
By Lemma \ref{baseinH2}, $\eta'$ and $\xi'$ are a basis of $H^{2}(M,\mathbb{Z})$. We define a homomorphism 
$$Q: H^{2}(\M,\Z) \rightarrow  H^2(\PP(E),\Z),$$ 
by letting $Q(\eta') = \eta$ and $Q(\xi') = \xi$ and extending  linearly. Note that $Q$ is a group isomorphism, indeed by Corollary \ref{tolmantop}, $\{\eta, \xi\}$ is an integral basis of $H^{2}(\PP(E),\Z)$. The main part of the proof will be to show that $$Q(c_{1}(\M)) = c_{1}(\mathbb{P}(E))  \;\;\; \text{and}  \;\;\;  Q^*(c_{2}(\mathbb{P}(E)) )= c_{2}(\M) ,  $$ where the second equation is interpreted in the dual space of $H^{2}$. Due to Theorem \ref{jupp}, to find the required diffeomorphism $\Phi$ it is sufficient to prove that the isomorphism $Q$ preserves the cubic intersection form, the second Steifel-Whitney class, and the first Pontryagin class. We will prove these in this order.

 The invariance of the Pontryagin class will follow from the invariance of the Chern classes, using the equation $p_1 = c_1^2-2c_2$. Then, finally the fact that $Q(c_{1}(\M)) = c_{1}(\mathbb{P}(E))$ will ensure that the diffeomorphism preserves the homotopy classes of the almost complex structure by \cite[Proposition 8]{OV}.

\textbf{1. Intersection trilinear form.} Comparing Equation (\ref{cube}) from Proposition \ref{intcube} with Equation (\ref{cubefirst}) from Corollary \ref{tolmantop} (2), we have that, for any $\lambda_{1},\lambda_{2} \in \R$, \begin{equation} \int_{\M} (\lambda_{1}\xi' + \lambda_{2} \eta')^3  =   \int_{\PP(E)} (\lambda_{1}\xi + \lambda_{2} \eta)^3. \label{cubicform} \end{equation}

Hence, by the definition of $Q$, for any $\alpha \in H^{2}(M,\mathbb{R})$ we have $$ \int_{\M} \alpha^3 = \int_{\PP(E)} Q(\alpha)^3,$$ i.e. $Q$ preserves the cubic intersection form. Recall that a trilinear from $F$ can be recovered from the associated cubic form $S$, using the polarisation $6F(x,y,z) = S(x+y+z)  - S(x+y) - S(x+z) - S(y+z) + S(x)+S(y)+S(z)$, so $Q$ preserves the intersection trilinear forms.

\textbf{2. Second Steifel-Whitney class.} Both $\M$ and $\PP(E)$ are almost complex manifolds with even $c_{1}$, by Corollary \ref{tolmantop} and Lemma \ref{othbasis}. Hence, both of them have vanishing second Steifel-Whitney class \cite[Proposition 8]{OV}, so in particular $w_{2}$ is preserved by $Q$.

\textbf{3. First Pontryagin Class.} Using the relation  $p_{1} = c_{1}^2-2c_{2}$ for almost complex $6$-manifolds \cite[Proposition 8]{OV}, it is sufficient to show that $Q$ preserves $c_{1}^2$ and $c_{2}$ as dual elements of $H^{2}$. Given that $Q$ preserves the intersection trilinear forms, it is sufficient to show that the intersection of $c_{1}(\M)$ with $\xi',\eta'$ is the same as the inersection of $c_{1}(\PP(E))$ with $\xi,\eta$ respectively. Then it follows that $Q$ preserves $c_{1}^2$ as a dual element since by  Lemma \ref{othbasis} $\{\eta',\xi'\}$ is an integral basis of $H^{2}(\M,\Z)$ and by Corollary \ref{tolmantop} $\{\xi,\eta\}$ is an integral basis of $H^{2}(\PP(E),\Z)$.  These intersection are the same, due to the identities  $c_{1}(\M) = 2\xi' + 2\eta'$  and $c_{1}(\PP(E)) = 2\xi + 2\eta$, proved in Lemma \ref{othbasis} and Corollary \ref{tolmantop} respectively.

To see that $c_{2}$ is preserved, note that by Lemma \ref{othbasis} we have that  $c_{2}(\M).\eta'  =  6$, $c_{2}(\M). \xi' = 6$, and by Corollary \ref{tolmantop} (5), $c_{2}(\PP(E)).\eta  =  6$, $c_{2}(\PP(E)). \xi = 6$ So we see that $c_{2}$ is preserved by $Q$ as a dual element. We have shown that $p_{1}$ is preserved by $Q$.

Hence, by Theorem \ref{jupp} the homomorphism $Q : H^{2}(\M,\Z) \rightarrow  H^2(\PP(E),\Z) $  is induced by a diffeomorphism $\Phi: \PP(E) \rightarrow \M$. Furthermore,  $Q(c_{1}(\M)) = c_{1}(\PP(E))$, by the computations of $c_{1}$ in Corollary \ref{tolmantop} and Lemma \ref{othbasis}. Hence, by \cite[Proposition 8]{OV}, the diffeomorphism respects the homotopy type of almost complex structure. \end{proof}

\subsubsection{Proof of Theorem \ref{twosymplectic} }

\begin{proof}[Proof of the Theorem \ref{twosymplectic}]
As was shown in Theorem \ref{toltheorem}, for each $\lambda_{2}>\lambda_{1}>0$, there is a symplectic form $\omega_{\lambda_{1},\lambda_{2}}$ on $\M$ with cohomology class $\lambda_{1} \xi' + \lambda_{2} \eta'$, having a Hamiltonian $\mathbb{T}^{2}$-action. In Theorem \ref{diffeothm}, it was shown that there is a diffeomorphism $\Phi : \M \rightarrow \PP(E)$, such that $\Phi^{*}(\lambda_{1} \xi + \lambda_{2} \eta) = \lambda_{1} \xi' + \lambda_{2} \eta'$. Via this identification, we get the required symplectic forms on $\PP(E)$.
\end{proof}

\section{Proof of Theorem \ref{newmain}}

In this section we prove the main result of the paper. This is done by proving first Theorem \ref{kahlerbundle} in Section \ref{sec:kahlerbundles}, and then by studying basic properties of holomorphic rank two bundles with $c_1=c_2=-1$ over $\mathbb CP^2$ in Section \ref{sec:c1c2}.
An important idea behind the proof of Theorem \ref{newmain} is that a K\"ahler three-fold with $c_1^3>0$ admits a Mori contraction, and luckily $c_1^3(\M)=64>0$, as we saw in Lemma \ref{chernclass}.

\subsection{K\"ahler metrics on $\mathbb CP^1$-bundles over $\mathbb CP^2$}\label{sec:kahlerbundles}

In this section we prove Theorem \ref{kahlerbundle}, which studies K\"ahler structures on manifolds almost complex equivalent to holomorphic $\mathbb CP^1$-bundles over $\mathbb CP^2$. We start with the following standard statement.

\begin{lemma} \label{btwokahler}
Every compact K\"ahler manifold $Y$ with $b_{2}(Y)=2$ is projective. 
\end{lemma}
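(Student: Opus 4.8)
The plan is to deduce projectivity from the Kodaira embedding theorem, whose hypothesis in the Kähler setting is the existence of an \emph{integral} (equivalently, rational) Kähler class. So the whole proof reduces to producing a class in the Kähler cone of $Y$ that is rational, i.e. lies in the image of $H^2(Y,\QQ)$ inside $H^2(Y,\R)$. Once such a class is found, scaling it to be integral gives an ample line bundle and hence an embedding of $Y$ into projective space.

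The crux is a Hodge-theoretic parity argument forcing the Kähler cone to be an open cone inside all of $H^2(Y,\R)$. Since $Y$ is compact Kähler, $H^2(Y,\C)$ carries the Hodge decomposition $H^{2,0}\oplus H^{1,1}\oplus H^{0,2}$ with $h^{2,0}=h^{0,2}$, so $b_2(Y)=2h^{2,0}+h^{1,1}$. The class of the Kähler form is a nonzero element of the real $(1,1)$-classes, so $h^{1,1}\ge 1$. Combined with $b_2(Y)=2$ and $h^{2,0}\ge 0$, the only possibility is $h^{2,0}=0$ and $h^{1,1}=2$. In particular $H^{2,0}=H^{0,2}=0$, so every real degree-two class is of type $(1,1)$; that is, $H^{1,1}(Y,\R)=H^2(Y,\R)$.

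Having established that, the remaining step is a standard density argument. The Kähler cone $\mathcal{K}\subset H^{1,1}(Y,\R)=H^2(Y,\R)$ is open and nonempty, while the rational classes $H^2(Y,\QQ)$ are dense in $H^2(Y,\R)$. Hence $\mathcal{K}$ contains a rational class $\alpha$, and a suitable positive integer multiple of $\alpha$ is an integral Kähler class. By the Kodaira embedding theorem this integral class is the first Chern class of an ample line bundle, so $Y$ is projective.

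I do not expect a serious obstacle here: the argument is entirely standard, and the only genuinely substantive point is the observation that the Kähler hypothesis together with $b_2=2$ forces $h^{2,0}=0$, which is what collapses $H^{1,1}(Y,\R)$ onto the full rational lattice $H^2(Y,\R)$ and makes the density step applicable. The one thing to state carefully is the invocation of the Kodaira embedding theorem (equivalently, Kodaira's projectivity criterion: a compact Kähler manifold admitting a rational Kähler class is projective), for which I would cite a standard reference.
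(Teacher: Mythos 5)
Your proof is correct and follows essentially the same route as the paper: both use the Hodge decomposition $b_2(Y)=2h^{0,2}(Y)+h^{1,1}(Y)$ together with $h^{1,1}(Y)\ge 1$ to force $h^{0,2}(Y)=0$, and then conclude projectivity via Kodaira's criterion. The only difference is that you spell out the density-of-rational-classes argument that the paper compresses into the phrase ``implying that $Y$ has an ample line bundle,'' which is a welcome clarification but not a different approach.
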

\begin{proof}
Since $Y$ K\"{a}hler, we have $b_{2}(Y) = 2 = 2h^{0,2}(Y) + h^{1,1}(Y)$,  and  $h^{1,1}(Y)>0$. Hence $H^{0,2}(Y)=0$, implying that $Y$ has an ample line bundle, and is projective.
\end{proof}

\begin{proof}[Proof of Theorem \ref{kahlerbundle}]
By the assumptions of the theorem and the formula for $c_{1}^3$ given in Lemma \ref{topbundle} 2), we have that $c_1^3(M')>0$. It follows that $K_{M'}$ is not nef\footnote{Otherwise $K_{M'}^3\ge 0$, i.e., $-c_1^3(M')\ge 0$, a contradiction.}.
%Hence there exists an extremal contraction $\varphi: M'\to M''$. 
Recall now that by Lemma \ref{topbundle} 3), $c_1(M')$ is integrally divisible by $2$, and so we can apply \cite[Theorem 10]{ST}. According to this theorem there can be the following four possibilities.

1) There exists a simple blow down $\varphi: M'\to M''$.

2) $M'$ has a structure of an unramified conic bundle $\varphi: M'\to M''$  
over a smooth complex surface $M''$.

3) $M'$ has a structure of a quadric bundle $\varphi: M'\to M''$  
over a smooth curve $M''$.

4) $M'$ is a smooth Fano $3$-fold.

Let us consider separately these four cases.

{\it Case 1.} Let us show that in this case $M''$ is $\mathbb CP^3$ and $M'$ is a simple blow up of $\mathbb CP^3$. This will settle the theorem in this case, since a simple blow up of $\mathbb CP^3$ is the projectivisation of the bundle ${\mathcal O}\oplus {\mathcal O}(-1)$ over $\mathbb CP^2$. 

By our assumption, $\varphi: M'\to M''$ is a simple blow down. Let $y\in H^2(M',\mathbb Z)$ be the pullback of the generator of $H^2(M'',\mathbb Z)$ and let $z$ be the class of the exceptional divisor of the blow down $\varphi$. Clearly, $yz=0\in H^4(M',\mathbb Z)$.  Recall that  $c_1(V)=-1$ and let $c_2=d$.  Applying Lemma  \ref{topbundle} 4), we conclude that $d\le 0$. By Lemma \ref{topbundle} 2) we have $c_1^3(M')=56-8d$, and so $c_1^3(M'')=64-8d$.

 Since by Lemma \ref{btwokahler} $M'$ is projective, $M''$ is a projective $3$-fold with $b_{2}(M'')=1$ and $c_{1}(M'')^{3}>0$, so it is a Fano $3$-fold. All such manifolds apart from $\mathbb CP^3$ have $c_{1}^3$ less than $64$,  we conclude that $d=0$ and $M''$ is $\mathbb CP^3$.

{\it Case 2.} Let us show that in this case  $M'$ is a projectivisation of a rank two vector bundle over $\mathbb CP^2$. We know that $M'$ is an unramified conic bundle over a smooth surface $M''$. Clearly $\pi_{1}(M'')=1$ and $b_2(M'')=1$, so $M''$ has the same Betti numbers as $\mathbb CP^2$. Hence $M''$ is either $\mathbb CP^2$ or a fake projective plane. The latter possibility is excluded since fake projective planes have infinite fundamental group.

It follows that $M'$ is an unramified conic bundle over $\mathbb CP^2$. But each such bundle is a projectivisation of a certain rank two bundle $V'$ because the Brauer group of $\mathbb CP^{2}$ is trivial, see \cite[Proposition 4]{Beauville}. 
Note that $c_1(V')$ is odd, so we can tensor it with a line bundle $L$, so that $c_1(V'\otimes L)=-1$. Then by Lemma \ref{topbundle} 2), $c(V)=c(V')$.

% Lastly, we note that note that $M'$ is a projectivisation of a rank $2$ bundle over $\mathbb{CP}^{2}$. Since $M'$ is projective (Lemma \ref{btwokahler}), $\varphi$ is a morphism, so we see that $M'$ is a projectivisation because the Brauer group of $\mathbb{CP}^{2}$ is trivial. **NOTE: still have to show the bundles have the same Chern classes.

{\it Case 3.} In this case, $M'$ has the structure of a quadric bundle. A generic fibre $F$ of this bundle is a smooth quadric with $[F]^2=0\in H^4(M'',\mathbb Z)$. This contradicts Lemma \ref{topbundle} 4).

{\it Case 4.} By Lemma \ref{topbundle},  $M'$ satisfies the following conditions: $c_{1}$ is even, $c_{1}^{3}$ is divisible by $8$, $b_{3}=0$ and $b_{2}=2$. Upon inspecting the list of Fano $3$-folds \cite{IP}, and noting that a three-fold blown up in a curve cannot have even $c_{1}$, one sees that there are $2$ such examples. These are the projectivisation of the tangent bundle of $\mathbb CP^{2}$ and the projectivisation of the bundle $\mathcal{O} \oplus \mathcal{O}(1)$ over $\mathbb CP^2$. In both cases there exists a rank $2$-bundle $V'$ on $\mathbb CP^2$, with $c_{1}(V')$ odd, such that $M'$ is biholomophic to $\PP(V)$. Finally, as in case $2$, we may normalise $V'$ so that $c_{1}(V')=-1$, and then Lemma \ref{topbundle} 2) ensures that $c(V)=c(V')$.
%On the other hand, again by arrange that $c_{1}(V) = -1$ by tensoring $V$ by an appropriate line bundle without changing the projective bundle. After doing this, we see that there is no non-zero element in the second integer cohomology of $M''$ which has zero square as Lemma \ref{topbundle} 4) shows. 
\end{proof}

\subsection{Rank $2$ vector bundles over $\mathbb CP^2$ with $c_1=-1$, $c_2=-1$ }\label{sec:c1c2}

In this section we prove two simple results on holomorphic bundles over $\mathbb CP^2$ with $c_1=-1$, $c_2=-1$. For a vector bundle $V$ on projective space we set $V(k) = V \otimes \mathcal{O}(k)$.

\begin{lemma}\label{exactseq} Let $E$ be a holomorphic rank $2$ bundle with $c_1=-1, c_2=-1$ over $\mathbb CP^2$. Then $E$ doesn't have line sub-bundles and $H^0(E(-1)) \neq 0$.
\end{lemma}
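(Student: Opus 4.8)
My plan is to handle the two claims separately. The statement that $E$ has no line sub-bundle is a short numerical observation, whereas the non-vanishing $H^0(E(-1))\neq 0$ is the substantive part and will follow from Riemann--Roch combined with Serre duality.

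For the first claim, suppose toward a contradiction that $L\subset E$ is a line sub-bundle, i.e. a sub-line-bundle whose quotient is again locally free. Then $N=E/L$ is a line bundle and $E$ sits in an extension $0\to L\to E\to N\to 0$. Multiplicativity of total Chern classes gives $c_1(E)=\deg L+\deg N=-1$ and $c_2(E)=\deg L\cdot\deg N=-1$, so $\deg L$ and $\deg N$ would be integer roots of $t^2+t-1$. Its discriminant equals $c_1^2-4c_2=5$, which is not a perfect square, so this quadratic has no rational roots. This contradiction shows $E$ has no line sub-bundle. (Note this is consistent with the second claim: the section produced there gives only a subsheaf $\mathcal{O}(1)\hookrightarrow E$ whose quotient is not locally free.)

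For the second claim I would first record the Chern data of the twist: $c_1(E(-1))=c_1(E)-2=-3$ and $c_2(E(-1))=c_2(E)-c_1(E)+1=1$. Riemann--Roch on $\mathbb{CP}^2$ (using $\mathrm{td}(\mathbb{CP}^2)=1+\tfrac32 x+x^2$) then gives $\chi(E(-1))=1$. The key point is to upgrade this positive Euler characteristic to a non-vanishing of $H^0$, and for this I would exploit the self-duality of rank two bundles. Since $E$ has rank two and $\det E=\mathcal{O}(-1)$, one has $E^*\cong E\otimes(\det E)^{-1}=E(1)$. Hence $(E(-1))^*\otimes K_{\mathbb{CP}^2}=E^*(1)\otimes\mathcal{O}(-3)=E(-1)$, and Serre duality yields $h^2(E(-1))=h^0(E(-1))$. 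Substituting into $\chi(E(-1))=h^0-h^1+h^2$ gives $2h^0(E(-1))-h^1(E(-1))=1$, and since $h^1\ge 0$ we conclude $h^0(E(-1))\ge 1$, i.e. $H^0(E(-1))\neq 0$.

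The routine parts are the Chern-class bookkeeping and the Riemann--Roch computation. The step I regard as the heart of the argument, and the one easiest to miss, is the identification $h^2(E(-1))=h^0(E(-1))$: the bare equality $\chi(E(-1))=1$ does not by itself force a section (a priori one could have $h^0=0,\,h^1=1,\,h^2=2$), and it is precisely the rank-two self-duality $E^*\cong E(1)$ fed through Serre duality that makes the Euler characteristic compute $2h^0-h^1$ and thereby pins down $h^0\ge 1$.
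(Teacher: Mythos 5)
Your proof is correct, but for the substantive half it takes a genuinely different route from the paper's. For the first claim you and the paper make essentially the same numerical observation: a line sub-bundle would force the Chern polynomial of $E$ to factor into integer linear factors, which is impossible because the discriminant $c_1^2-4c_2=5$ is not a perfect square. For the second claim, the paper argues via stability: since $c_1^2(E)-4c_2(E)=5>0$, Schwarzenberger's criterion (Lemma 1.2.7 of \cite{OSS}) makes $E$ unstable, hence $H^0(E)\neq 0$ by Lemma 1.2.5 of \cite{OSS}; taking $k$ minimal with $H^0(E(k))\neq 0$, the twist $E(k)$ has a section vanishing in codimension two, and $k=0$ is excluded because the zero scheme would then have length $c_2(E)=-1$; therefore $k<0$ and $H^0(E(-1))\neq 0$. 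You instead compute $\chi(E(-1))=1$ by Riemann--Roch and upgrade this to $h^0\geq 1$ via the rank-two self-duality $E^*\cong E\otimes(\det E)^{-1}=E(1)$, which through Serre duality yields $h^2(E(-1))=h^0(E(-1))$ and hence $2h^0(E(-1))-h^1(E(-1))=1$. Your computations check out ($c_1(E(-1))=-3$, $c_2(E(-1))=1$, $\chi(E(-1))=1$, and $(E(-1))^*\otimes K_{\mathbb{CP}^2}\cong E(-1)$), and your remark that the bare equality $\chi=1$ would not suffice without the duality is exactly the right point. Your argument is more self-contained, using only Riemann--Roch and Serre duality rather than instability theory and zero-scheme lengths; what the paper's argument buys is extra structural information, namely that the minimal twist $E(k)$ has a section vanishing in codimension two. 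For the lemma as stated, and for its use in Corollary \ref{badline} (which needs only a non-zero section of $E(-1)$, its vanishing being deduced there separately from $c_2(E(-1))=1\neq 0$), both proofs serve equally well.
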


%Recall, that a {\it sub-line} bundle of a rank two vector bundle is a rank one subsheaf  
%which is a line bundle (Friedman, Definition 3).

%\begin{proof} To see that $E$ doesn't have a line sub-bundle we note that the Chern polynomial $c(E)=1-x-x^2$ has no rational roots and so is not divisible by $1+nx$ for any integer $n$ (see \cite{OSS}[Lemma 2.2.4]). 

%Let's now prove that $H^0(E(-1))>0$. Recall that by a theorem of Schwarzenberger \cite{OSS}[Lemma 1.2.7] a rank two bundle over $\mathbb CP^2$ unstable if $c_1^2-4c_2>0$. Since  $c_1^2(E)-4c_2(E)=5$, we see that $E$ is unstable. It follows, since the slope of $E$ is $\frac{-1}{2}$, that $E$ has a sub-line bundle $L\to E$ where $L\cong O(k)$, $k\ge 0$. Let us take such maximal $k$ and let us show that $k>0$. Assume by contradiction that $k=0$. Take a non-zero section $s$ of $L$, which is also a section of $E$ by the left-exactness of the sequence of global sections. Since $k$ is maximal, the scheme $\{s=0\}\subset \mathbb CP^2$ is zero-dimensional, indeed if $s$ were vanishing on a curve $C\subset \mathbb CP^2$ we would get a non-zero section of $E\otimes O(-C)$. So the scheme $\{s=0\}$ is zero-dimensional and $c_2(E)$ is the length of this scheme. This is a contradiction since $c_2(E)<0$. 

%Thus, there is a sub-line bundle of $E$ of the form $\mathcal{O}(k)$ for $k>0$, Implying that $\mathcal{O}(k-1)$ is a sub-line bundle of $E(-1)$. Hence, $H^{0}(\mathbb{CP}^{2},E(-1)) \neq 0$.
%\end{proof}

\begin{proof}
 To see that $E$ doesn't have a line sub-bundle we note that the Chern polynomial $c(E)=1-x-x^2$ has no rational roots and so is not divisible by $1+nx$. On the other hand any rank two bundle $E$ with a line sub-bundle splits topologically and so has $c(E)=(1+nx)(1+mx)$.
%for any integer $n$ (since if there is a line sub-bundle then the bundle splits topologically). 

Let's now prove that $H^0(E(-1))>0$. Recall that by a theorem of Schwarzenberger \cite{OSS}[Lemma 1.2.7] a rank two bundle over $\mathbb CP^2$ is unstable if $c_1^2-4c_2>0$. Since  $c_1^2(E)-4c_2(E)=5$, we see that $E$ is unstable. Hence by \cite{OSS}[Lemma 1.2.5], $H^{0}(\mathbb CP^2,E) \neq 0$. We let $k$ be the minimal integer such that $H^{0}(E(k)) \neq 0$. Then by \cite[Section 2.2, Paragraph 2]{S}, $E(k)$ has a section vanishing in codimension $2$. We saw that $k \leq 0$, and we claim that $k<0$. Suppose for a contradiction that $k = 0$, then $E$ has a section $s$ vanishing in codimension $2$, but then the length of the scheme $\{s=0\}$ is $c_{2}(E) =-1$, which is absurd. So there exists $k<0$ with $H^{0}(\mathbb CP^2,E(k)) \neq 0$, in particular $H^{0}(\mathbb CP^2,E(-1)) \neq 0$.
\end{proof}

%Lemma \ref{exactseq} has the following corollary. We recall that a complex projective manifold $X$ is called a {\it weak Fano} if $-K_X$ is nef and big.

\begin{corollary}\label{badline} Let $E$ be a holomorphic rank $2$ bundle with $c_1=-1, c_2=-1$ over $\mathbb CP^2$. 
\begin{enumerate}
\item There exists $n\ge 2$ and a line $L\subset \mathbb CP^2$ such that the restriction of $E$ to $L$ splits as $\mathcal{O}(n) \oplus \mathcal{O}(-1-n)$.

\item Let $L$ be any line as in 1), let $p^{-1}(L)=D$ be the Hirzebruch surface projecting to $L$, and let $S\subset D$ be the unique smooth curve with negative self-intersection. Then 
\begin{equation}\label{threeintegrals}
\int_Sc_1(T\PP(E))\le -2,\;\;\int_S\eta=1, \;\;\int_S\xi\le -2.
\end{equation}
\end{enumerate}
\end{corollary}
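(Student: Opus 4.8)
The strategy is to derive part (1) from the instability of $E$ established in Lemma \ref{exactseq}, and part (2) from a direct intersection computation on the Hirzebruch surface $D$.

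\emph{Part (1).} Following the proof of Lemma \ref{exactseq}, let $k<0$ be the least integer with $H^0(E(k))\neq 0$. A section attaining this minimal twist vanishes in codimension two, and minimality forces it to be saturated, so it produces an exact sequence
$$0\to \mathcal{O}(m)\to E\to I_Z(-1-m)\to 0,\qquad m:=-k\ge 1,$$
with $Z\subset\mathbb{CP}^2$ a finite subscheme; comparing $c_2$ gives $\mathrm{length}(Z)=m^2+m-1$. Restricting this sequence to a line $L$ disjoint from $Z$ yields $0\to\mathcal{O}_L(m)\to E|_L\to\mathcal{O}_L(-1-m)\to 0$, which splits because $H^1(\mathcal{O}_L(2m+1))=0$; hence $E|_L\cong\mathcal{O}(m)\oplus\mathcal{O}(-1-m)$. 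If $m\ge 2$ this already proves (1) with $n=m$, taking any line missing the finite set $Z$.

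\emph{The remaining case $m=1$} is the technical heart of the argument. Here $Z$ is a single reduced point, and I would take $L$ to be a line through $Z$. The restricted map $\mathcal{O}_L(1)\to E|_L$ is still injective (a generically injective map from a line bundle on the integral curve $L$), but now the quotient $I_Z(-1-m)|_L$ acquires a torsion subsheaf of length one supported at $Z$ (one checks $\mathrm{Tor}_1(I_Z,\mathcal{O}_L)=0$, so the restricted sequence stays exact). Pulling this torsion back into $E|_L$ produces a rank-one subsheaf $G$ with $\mathcal{O}_L(1)\subset G$ and $G/\mathcal{O}_L(1)\cong\mathbb{C}_Z$, so $G$ is a line bundle of degree $2$ sitting inside $E|_L\cong\mathcal{O}(a)\oplus\mathcal{O}(-1-a)$. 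A degree-two sub-line-sheaf forces $a\ge 2$, proving (1) with $n=a$. I expect this jumping-line step to be the main obstacle, since it is exactly where one must see that the generic splitting type genuinely increases along lines meeting $Z$.

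\emph{Part (2).} Since $E|_L\cong\mathcal{O}(n)\oplus\mathcal{O}(-1-n)$ splits, $D=p^{-1}(L)=\PP(E|_L)$ is the Hirzebruch surface $\mathbb{F}_{2n+1}$, and $S$ is its negative section, with $S^2=-(2n+1)$ and $S\cdot f=1$, where $f$ denotes the fibre class of $p|_D$. Note $\eta|_D=p^*(x|_L)=f$, so $\int_S\eta=S\cdot f=1$. Restricting the relation $\xi^2-\eta\xi-\eta^2=0$ of Corollary \ref{tolmantop}(1) to $D$, where $\eta^2|_D=0$, gives $\xi_D^2=\xi_D\cdot f=1$ for $\xi_D:=\xi|_D$. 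Writing $S=\xi_D+cf$ (legitimate since $S\cdot f=\xi_D\cdot f=1$ and $f^2=0$) and solving $S^2=1+2c=-(2n+1)$ gives $c=-(n+1)$, whence $\int_S\xi=\xi_D\cdot S=1-(n+1)=-n\le -2$. Finally, using $c_1(T\PP(E))=2\xi+2\eta$ from Corollary \ref{tolmantop}(3),
$$\int_S c_1(T\PP(E))=2\int_S\xi+2\int_S\eta=-2n+2\le -2,$$
which completes (2).
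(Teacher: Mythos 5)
Your proposal is correct — I checked the Tor computation behind your $m=1$ case and the Hirzebruch-surface arithmetic in part (2), and both hold up — but it is organized quite differently from the paper's proof. For part (1) the paper has no case analysis at all: it takes any nonzero section $s$ of $E(-1)$ (provided by Lemma \ref{exactseq}), notes that $\{s=0\}$ is non-empty because $c_2(E(-1))=1\neq 0$, and then chooses the line $L$ to meet $\{s=0\}$ in a finite non-empty set; since $s|_L$ vanishes somewhere, the saturation of its image in $E(-1)|_L$ has degree at least $1$, giving $\mathcal{O}(n)\subset E|_L$ with $n\geq 2$ in one stroke. Your $m=1$ step isolates exactly the same phenomenon — lines through the vanishing locus have jumped splitting type — but proves it via the restricted Serre sequence and the torsion of $I_Z\otimes\mathcal{O}_L$, which is heavier machinery for the same conclusion; on the other hand, your route keeps track of the generic splitting type $m$ and the length of $Z$, information the paper's argument never needs. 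For part (2) the logical order is reversed: the paper first computes $\int_S c_1(T\PP(E))=3+S\cdot S\le -2$ geometrically, by adjunction on $D$ together with the normal-bundle identity $N_D\cong p^{*}N_L$, and then deduces $\int_S\xi\le -2$ from $c_1=2\xi+2\eta$; you instead pin down the numerical class $[S]=\xi_D-(n+1)f$ in $H^2(D,\Z)$ from the restricted ring relation and derive the exact values $\int_S\xi=-n$ and $\int_S c_1(T\PP(E))=2-2n$, from which the stated bounds follow. Your version yields slightly sharper (exact) intersection numbers; the paper's avoids any need to identify the class of $S$ in the Hirzebruch surface.
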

\begin{proof} 1) Using Lemma \ref{exactseq} we can find a non-zero section $s$ of $E(-1)$. Since $c_{2}(E(-1))=1$, the section $s$ has zeros, i.e. $\{s=0\}$ is a non-empty subscheme of $\mathbb CP^2$. Let us take a line $L\subset \mathbb CP^2$ that has a finite non-empty intersection with $\{s=0\}$. Then the section $s$ has a finite number of zeros on $L$, and so the restriction of $E(-1)$ to $L$ contains a sub-bundle $O(n-1)$ for $n\ge 2$. It follows that $E|_L$ contains a line sub-bundle $O(n)$ for $n\ge 2$. 

2) Let $L\subset \mathbb CP^2$ be the line found in 1).  Then the surface $p^{-1}(L) = D$ is isomorphic to a Hirzebruch surface $\mathbb{F}_{m}$, where $m \geq 5$. 

Let $S$ be the section in $D$ with negative self intersection, then we have $S \cdot S=-m\leq -5$. Let us now estimate the integral of $c_1(T\PP(E))$ over $S$. Using the fact that the normal bundles of $D$ and $L$ satisfy the relation $N_{D} \cong p^{*}N_{L}$,  we have that 
$$\int_{S} c_{1}(T\PP(E)) = 3 + S \cdot  S\le -2.$$ 

Next, since $p:S\to L$ is an isomorphism, from  the definition of $\eta$ we have $\int_{S}\eta=1$. Finally, since $c_{1}(T\mathbb P(E))=2\eta+2 \xi$, we conclude $\int_{S}\xi\le -2$. 
\end{proof}

\subsection{Proof of Theorem \ref{newmain} and Corollary \ref{firstquestion}}

We are finally ready to prove the main result.

%\begin{theorem}
%Consider a symplectic form $\omega_{\lambda_{1},\lambda_{2}}$ on $\M$, where the cohomology class of $\omega_{\lambda_{1},\lambda_{2}}$ is $\lambda_{1} \xi + \lambda_{2} \eta$. Suppose that there is a compatible K\"{a}hler metric on $(\M,\omega_{\lambda_{1},\lambda_{2}})$, then $$ \lambda_{2} > 2 \lambda_{1}.$$
%\end{theorem}
\begin{proof}[Proof of Theorem \ref{newmain}] Let us assume that $\M$ has a K\"ahler metric $g$ compatible with $\OM$. We will deduce that $\lambda_2>2\lambda_1$.

Recall that by Theorem \ref{diffeothm} there is a diffeomorphism $\Phi:\PP(V)\to \M$ that respects the homotopy type of the almost complex structure, where $V$ is a rank $2$ holomorphic bundle with $c_1(V)=-1$, $c_2(V)=-1$. Since  $4c_2(V)-c_1(V)^2=-5<27$, we can apply Theorem \ref{kahlerbundle} to the K\"ahler manifold $(\PP(V),\Phi^*(g), \Phi^*(\OM))$. According to Theorem \ref{kahlerbundle} this K\"ahler manifold is biholomorphic to the projectivisation of a holomorphic rank two bundle $E$ with $c_1(E)=-1$, $c_2(E)=-1$.

By Corollary \ref{badline} (1), there is a line $L \subset \mathbb CP^2$ over which $E$ splits as $\mathcal{O}(n) \oplus \mathcal{O}(-1-n)$, where $n \geq 2$. Let $S\subset \PP(E)$ be the complex curve given by Corollary \ref{badline} (2). Since $\Phi^*(\OM)$ is K\"ahler, its integral over $S$ is positive, and using Equation (\ref{threeintegrals}) we get 

$$0<\int_S\Phi^*(\OM)=\int_S (\lambda_1\xi+\lambda_2\eta)\le -2\lambda_1+\lambda_2.$$ 

Hence, we have finally that $\lambda_{2}>2\lambda_{1}.$
\end{proof}

\begin{remark} \normalfont
It is a curious coincidence that Theorem \ref{newmain} implies that $(\M,\omega_{1,2})$ does not have any compatible K\"{a}hler metric. The symplectic form $\omega_{1,2}$ is the original symplectic form constructed by Tolman in \cite[Lemma 4.1]{To1}.
\end{remark}

Now, we immediately get a proof of Corollary \ref{firstquestion}.

\begin{proof}[Proof of Corollary \ref{firstquestion}]
We note that the choice of subtorus $S^{1} \subset \mathbb{T}^{2}$ described in the proof of Lemma \ref{chernclass} 2) gives a Hamiltonian $S^{1}$-action with all of the fixed points being isolated on $(\M,\omega_{\lambda_{1},\lambda_{2}})$ for any $0 < \lambda_{1} < \lambda_{2}$. By Theorem \ref{newmain} if $\lambda_{2} \leq 2\lambda_{1}$, $(\M,\omega_{\lambda_{1},\lambda_{2}})$ does not have a compatible K\"{a}hler metric. Making any such choice, for example $\lambda_1=1, \lambda_2=2$, gives the required example. 
\end{proof}

\section{Proof of Theorem \ref{main}}

In this section,  after proving some basic results on coprime $S^1$-actions on $\M$ in Section \ref{sec:coprimetolm}, and recalling basic results on holomorphic $\mathbb C^*$-actions in Section \ref{graphs}, we prove Theorem \ref{main}.

\subsection{Coprime circle actions on Tolman's manifold}\label{sec:coprimetolm}

Recall that a Hamiltonian $S^1$-action on a symplectic manifold is called {\it coprime} is its weights at any fixed point are coprime and have modulus greater than $1$. In this section we study  coprime actions on Tolman's manifold $\M$. Let $\phi: \M\to \mathbb R^2$ be the moment map, then for any $(a,b)\subset \mathbb Z^2$  the Hamiltonian $\phi^*(ax+by)$ defines an $S^1$-action on $\M$. The weights of such an action are shown in Figure \ref{figure}.

%In this section, we define a subclass of Hamiltonian circle actions, called coprime actions, that we will be primarily concerned with in this paper. In Lemma \ref{inf} and Lemma \ref{4free}, we specify infinitely many Hamiltonian $S^{1}$-actions on Tolman's manifold with this property.  Recall that Tolman's manifold is denoted by $\M$ and  the moment map for the Hamiltonian $\mathbb{T}^{2}$-action is denoted $\phi: \M \rightarrow \mathbb{R}^{2}$. In Subsection \ref{circle} the calculation of the weights of the circle action associated to the linear function $L(x,y) = ax+ by$, (i.e. with Hamiltonian $L \circ \phi$) was shown in Figure \ref{figure}. 

It will be convenient for us to introduce some notations for the six fixed points of the $\mathbb T^2$-action. For this we choose $\lambda_1=1$, $\lambda_2=2$ and denote each fixed point as $x_{ij}$, where $(i,j)\subset \mathbb R^2$ is the image of $x_{ij}$  with respect to the moment map $\phi$ corresponding to the symplectic  form $\omega_{1,2}$. Thus the six points are denoted by $x_{00}, x_{40}, x_{11}, x_{21}, x_{03}, x_{13}$ as in Figure \ref{fig:marking}. 

\begin{figure}[!ht]
\vspace{0cm}
\begin{center}
\includegraphics[scale=0.3]{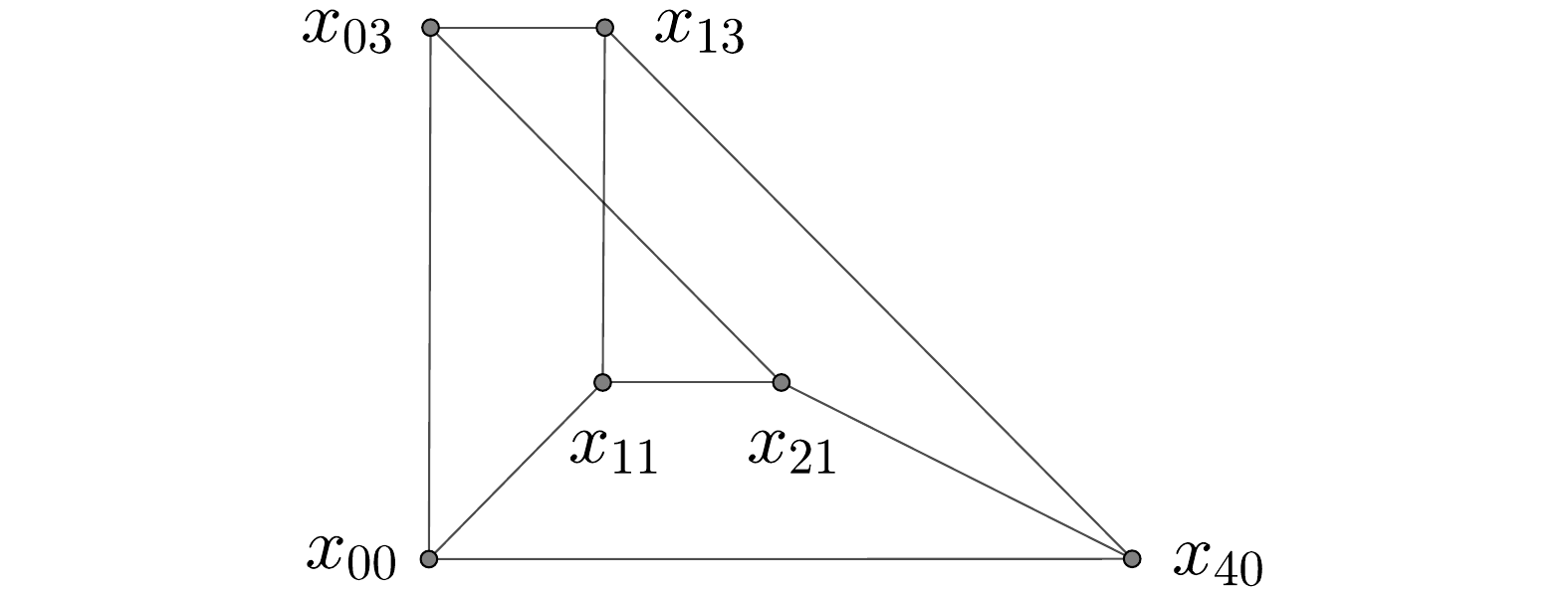}
\end{center}
\vspace{-0.5cm}
\caption{Marking of the fixed points}
\vspace{-0.2cm}
\label{fig:marking}
\end{figure}

%To fix notation, it will be convenient to state the following definition.
%
%\begin{definition}  \label{sum}\normalfont Suppose that $M$ is a symplectic manifold with a Hamiltonian $S^{1}$-action. For a fixed point $p \in M$ we set $$\alpha(p) = - \sum w_{i}, $$ where $w_{i}$ are the weights of the $S^{1}$-action at $p$. 
%\end{definition}
%
%
%%Let $x_{i,j}$ denote the fixed point whose image by the $\mathbb{T}^{2}$-moment map is $(i,j) \in \mathbb{R}^{2}$ (as in Theorem \ref{toltheorem}). 
% 
% The following  Lemma follows immediately from the calculation of the weights in Figure \ref{figure}. 
% 
%\begin{lemma}\label{sumlem} Consider the Hamiltonian $S^{1}$-action on $\M$,  with Hamiltonian $\phi^*(ax+by)$. Let $x_{i,j} \in M$ be as defined above. Then $\alpha(x_{00})=-2a-2b,\, \alpha(x_{11})=\alpha(x_{21})=0,\, \alpha(x_{03})=2b-2a,\, \alpha(x_{13})=2b, \alpha(x_{40})=4a-2b.$  \end{lemma}

%%\begin{definition} \label{types}
%%We say that a Hamiltonian $S^{1}$-action is coprime if it has isolated fixed points and the weights at each fixed are pairwise coprime. We say a $\mathbb{C}^{*}$-action is coprime, if the restriction of the action to $S^{1} \subset \mathbb{C}^{*}$ is coprime. 
%%\end{definition}
%
%\begin{remark} \label{graphrem} \normalfont
%Note that for a coprime action, all of the weights of the action have modulus strictly greater than $1$ and each fixed point of the action is contained in exactly $n$ isotropy spheres.
%\end{remark}

\begin{lemma}\label{newcoprime} Let $a$ and $b$ be two non-zero integers. The corresponding $\mathbb S^1$-action on $\M$ is coprime if and only if $a$ and $b$ are coprime, $|a\pm b|>1$, and $|2b-a|>1$. 
\end{lemma}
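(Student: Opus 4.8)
The plan is to read off the weights at each of the six fixed points directly from Figure \ref{figure} and then determine, point by point, exactly when the pair of nonzero weights fails to be coprime. Recall that the $S^1$-action attached to $(a,b)$ assigns, at each fixed point, a triple of weights obtained by evaluating the functional $\omega(V) = bx_2 - ax_1$ on the three primitive edge-vectors emanating from that vertex in the GKM graph. Since $\M$ is three-dimensional, each fixed point carries three weights; the action is \emph{coprime} precisely when, at \emph{every} fixed point, all weights are nonzero and the gcd of the (three) weights is $1$. So first I would list, for each of the six vertices, the three weights as linear forms in $a$ and $b$, using Figure \ref{figure}.

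Inspecting Figure \ref{figure}, the weight forms that appear across the six fixed points are, up to sign, exactly $a$, $b$, $a+b$, $a-b$ (equivalently $b-a$), and $2a-b$ (equivalently $b-2a$). The key structural observation I would record is that at each vertex the three weight-forms are primitive integer combinations whose pairwise behaviour is controlled: since the edges at a smooth toric-type vertex form part of an integral basis relationship, the gcd condition at a vertex reduces to a condition on pairs of these linear forms. Concretely, I would check that the gcd of the three weights at a given vertex equals $1$ iff none of the relevant forms among $\{a,\,b,\,a\pm b,\,2a-b\}$ share a common factor dictated by that vertex, which amounts to the three global conditions: $\gcd(a,b)=1$, $|a\pm b|>1$, and $|2b-a|>1$. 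The forward direction (coprime $\Rightarrow$ these inequalities) is obtained by exhibiting, for each violated condition, a fixed point whose weights then share a common factor or include a zero; for instance $\gcd(a,b)=d>1$ forces a common factor $d$ at the vertex $x_{00}$ whose weights involve $a$, $b$, $a+b$, while $|a-b|\le 1$ (i.e.\ $a-b\in\{-1,0,1\}$) produces either a vanishing weight or a weight $\pm1$ paired appropriately at the vertex carrying the form $a-b$.

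For the reverse direction I would argue vertex-by-vertex that the three stated conditions guarantee coprimality at all six points simultaneously. Here the main point is that every weight appearing in Figure \ref{figure} is one of $\pm a,\pm b,\pm(a+b),\pm(a-b),\pm(2a-b)$, and at each vertex two of the three weights already form a coprime pair under the hypotheses: e.g.\ wherever $a$ and $b$ both appear, $\gcd(a,b)=1$ finishes it; wherever the pair $(a,\,a-b)$ or $(b,\,a-b)$ appears, coprimality of $a,b$ again forces $\gcd=1$; and the conditions $|a\pm b|>1$, $|2b-a|>1$ are precisely what is needed to rule out a \emph{zero} weight (non-isolated fixed point) or a weight equal to $\pm1$ at the vertices carrying $a\pm b$ and $2a-b$. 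I would also invoke the paper's own characterisation, stated just before the lemma, that for a subgroup $S^1\subset\mathbb T^2$ the action is coprime iff it has no weight in $\{-1,0,1\}$; this reframes the whole problem as: determine when none of the forms $a,b,a+b,a-b,2a-b$ (and their negatives) takes a value in $\{-1,0,1\}$, which is a purely arithmetic translation of the three displayed conditions.

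The main obstacle I anticipate is bookkeeping rather than conceptual: one must be careful that the \emph{full} gcd of a triple at a vertex — not merely a pairwise gcd — equals $1$, and that no weight silently vanishes (which would destroy isolatedness of the fixed point and hence the coprime property). In principle a triple could be pairwise-non-coprime yet have trivial total gcd, so I would verify that the six triples arising from Figure \ref{figure} do not hide such a case, and that the three global conditions genuinely capture every failure. Concretely the delicate vertices are the two outer ones carrying the forms $2a-b$ (namely $x_{40}$ and $x_{21}$), since these introduce the factor $2$ and require the extra inequality $|2b-a|>1$; I would treat these last and check directly that under $\gcd(a,b)=1$ together with $|a\pm b|>1$ and $|2b-a|>1$ the weight triples there are coprime and nonvanishing. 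Once all six vertices are dispatched, combining the two directions yields the stated equivalence.
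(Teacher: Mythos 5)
Your method is the same as the paper's: read the six weight triples off Figure \ref{figure}, get the forward direction by pointing at the offending vertices, and get the reverse direction by vertex-by-vertex gcd bookkeeping (the paper does exactly this, via $\gcd(a,a\pm b)=\gcd(b,a\pm b)=\gcd(a,b)$ and $-2a+b=-a+(b-a)$). The genuine gap is in your working definition of \emph{coprime}. You take it to mean that at each fixed point the weights are nonzero and the gcd of the \emph{triple} is $1$. That is not the notion the paper uses (its proof, and Lemma \ref{coprimecurve}, work with pairwise coprimality), and under your reading the statement you are proving is false, so your forward direction cannot be completed. Indeed, at every vertex of Figure \ref{figure} every pair of weights has gcd exactly $\gcd(a,b)$ (for instance $\gcd(b-a,2a-b)=\gcd(b-a,a)=\gcd(a,b)$), so "triple gcd $=1$ and no zero weight" is equivalent to $\gcd(a,b)=1$ together with $a\neq\pm b$, $2a\neq b$; the pair $(a,b)=(2,3)$ satisfies this, yet $|a-b|=1$, so Lemma \ref{newcoprime} declares that action non-coprime. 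The inequalities in the lemma do not come from any gcd computation at all: they come from the convention, stated by the paper when coprimality is defined and used verbatim in its "only if" direction, that a coprime action has no weights in $\{-1,0,1\}$. A weight equal to $\pm 1$ is perfectly compatible with your triple-gcd condition, which is exactly why your phrase "a weight $\pm1$ paired appropriately" has nothing to pair it with. You do invoke the no-$\{-1,0,1\}$ characterisation at the end as a "reframing"; that is the step that actually carries the whole argument, and it must be the definition you run with from the start, with the pairwise-coprimality check kept as the other half of the verification.

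Two further slips, both inherited from the paper but which a careful write-up should catch. First, the weight form at $x_{21}$ and $x_{40}$ in Figure \ref{figure} is $2a-b$ up to sign, while the condition displayed in the lemma is $|2b-a|>1$; you attach the condition $|2b-a|>1$ to "the vertices carrying $2a-b$" as though these were the same integer, but they are not (for $(a,b)=(3,5)$ one has $2a-b=1$ while $2b-a=7$), and the figure forces the condition to read $|2a-b|>1$. Second, your closing claim that "no weight lies in $\{-1,0,1\}$" is "a purely arithmetic translation of the three displayed conditions" is false: $a$ and $b$ are themselves weights at $x_{00}$, and the three conditions do not exclude $|a|=1$ or $|b|=1$ (take $(a,b)=(1,5)$: all three conditions hold, yet the action has a weight equal to $1$). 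The paper's "if" direction makes the same unjustified assertion, so this is a defect of the source as much as of your proposal, but a correct proof must either add $|a|>1$ and $|b|>1$ to the hypotheses or explain why they are intended.
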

\begin{proof}
The ``only if'' direction is clear, indeed since $a$ and $b$ are weights at one fixed point they have to be coprime. At the same time the numbers $a\pm b$ and $2b-a$ are the weights of the action at some point and so they are different from $-1,0,1$.

Let us now prove the ``if direction''. From the weights of the $S^1$-action shown on Figure \ref{figure}  it follows that no weight of the circle action is equal to $-1$, $0$, or $1$. Since  $a$ and $b$ are coprime, $a$ and $b$ are each  coprime to both $a+b$ and $a-b$, which implies the weights at $x_{00},x_{11},x_{03},x_{13}$ are pairwise coprime. Note that $-2a+b=-a+(b-a)$ is coprime with both $a$ and $b-a$, hence the weights at $x_{40}$ are pairwise coprime. Similarly, we may check that the weights at $x_{21}$ are pairwise coprime.
\end{proof}

%\begin{lemma} \label{inf}
%There are infinitely many choices of $(a,b) \in \mathbb{Z}^{2}$ with $a,b>0$  such that the corresponding $S^{1}$-action is effective, has isolated fixed points and has no weights equal to $\pm1$.
%\end{lemma}
%\begin{proof}
%To ensure that the action is effective, we simply choose $a$ and $b$ to be coprime, i.e. primitive points. From the computation of the weights shown in Figure \ref{figure}, it follows up to multiplying by $\pm1$, the weights are $a,b,a+b,a-b, 2a-b$. To ensure the remaining conditions, we must ensure that each of these is not equal to $-1,0$ or $1$. These conditions correspond to a finite collection of (affine) lines in $\mathbb{Z}^{2}$ so there are clearly infinitely many primitive points in the complement.
%\end{proof}

\begin{lemma} \label{4free}
Suppose that $a$ and $b$ are such that the corresponding $S^{1}$-action is coprime. Then the $9$ isotropy spheres for the $S^{1}$-action are exactly the $\mathbb{T}^{2}$-invariant spheres for Tolman's Hamiltonian $\mathbb{T}^{2}$-action. In particular, $c_{1}(\M) \cdot S \geq 0$ for every isotropy sphere $S$.
\end{lemma}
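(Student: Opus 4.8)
The plan is to establish the set-theoretic equality of the two families of spheres first, and then read off the Chern-number inequality directly from Lemma \ref{chernclass}. I would argue both inclusions separately. For the inclusion ``isotropy sphere $\Rightarrow$ one of the nine'', let $S$ be an isotropy sphere, so $S$ is a two-dimensional component of $\M^{\mathbb{Z}_k}$ for some $k\ge 2$ with $\mathbb{Z}_k\subset S^1\subset \mathbb{T}^2$, and $S\not\subset \M^{S^1}$. Since $\mathbb{T}^2$ is connected and abelian it preserves $\M^{\mathbb{Z}_k}$ and fixes each connected component setwise, so $S$ is $\mathbb{T}^2$-invariant. As $S$ is a two-dimensional symplectic submanifold not contained in $\M^{\mathbb{T}^2}$, the $\mathbb{T}^2$-action on $S$ is nontrivial and factors through an effective circle quotient; hence the identity component $H_0$ of the generic $\mathbb{T}^2$-stabilizer of $S$ is a subcircle and $S$ is a two-dimensional component of $\M^{H_0}$. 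By the GKM structure of Tolman's manifold used in Lemma \ref{chernclass} (via \cite{GKZ, GHS}), the two-dimensional components of $\M^{H_0}$, as $H_0$ ranges over the subcircles of $\mathbb{T}^2$, are exactly the nine $\mathbb{T}^2$-invariant spheres, so $S$ is one of them.

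Conversely, I would show each of the nine $\mathbb{T}^2$-invariant spheres is an isotropy sphere of the $S^1$-action. Fix such a sphere $S_E$, sent by the moment map to an edge $E$ with primitive direction $V=(x_1,x_2)$. The weight of the $S^1$-action tangent to $S_E$ at its fixed endpoints is $\omega(V)=bx_2-ax_1$, one of the entries in Figure \ref{figure}. Because the action is coprime, none of its weights lies in $\{-1,0,1\}$, so $|\omega(V)|\ge 2$; in particular $S^1$ acts nontrivially on $S_E\cong S^2$ with generic stabiliser exactly $\mathbb{Z}_{|\omega(V)|}$. Thus $S_E$ is a two-dimensional component of $\M^{\mathbb{Z}_{|\omega(V)|}}$ not contained in $\M^{S^1}$, i.e. an isotropy sphere of weight $|\omega(V)|\ge 2$. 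Combining the two inclusions gives the first assertion.

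The ``in particular'' statement is then immediate: since every isotropy sphere is one of the nine $\mathbb{T}^2$-invariant spheres, and Lemma \ref{chernclass}(1) records the values of $c_1(\M)$ on these nine spheres as $(6,4,4,2,2,2,2,2,0)$, all of which are nonnegative, we conclude $c_1(\M)\cdot S\ge 0$ for every isotropy sphere $S$.

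I expect the main obstacle to be the final step of the first inclusion, namely rigorously identifying an \emph{arbitrary} $\mathbb{T}^2$-invariant symplectic two-sphere with one of the nine listed spheres, i.e. verifying that the nine GKM edges exhaust all two-dimensional components of $\M^{H_0}$ as $H_0$ ranges over subcircles of $\mathbb{T}^2$, so that no spurious invariant sphere can occur. This is precisely where the GKM hypothesis and the toric one-skeleton description of \cite{GHS} are needed, and the statement must be phrased carefully to rule out extra components.
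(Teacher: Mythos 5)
Your proposal reaches the right conclusion and its two--inclusion structure is sound, but it takes a genuinely different route from the paper on the key ``exhaustiveness'' direction. The paper's proof is built on two short observations: (i) coprimality rules out isotropy submanifolds of dimension $4$ (at an $S^1$-fixed point of such a submanifold, two of the three pairwise coprime weights would be divisible by the same $k\ge 2$), and (ii) each of the nine $\mathbb{T}^2$-invariant spheres carries an $S^1$-action of weight $>1$; the identification of every isotropy sphere with one of the nine is then left largely implicit, resting on the local weight structure at the fixed points. You instead argue globally: an isotropy sphere is $\mathbb{T}^2$-invariant (commutativity plus connectedness), is pointwise fixed by a subcircle $H_0$, and by the GKM property (\cite{GKZ}, already invoked in the proof of Lemma \ref{chernclass}) the two-dimensional components of $\M^{H_0}$, over all subcircles $H_0$, are exactly the nine spheres. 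This is a valid and arguably cleaner alternative; note it even proves slightly more, since this inclusion uses no coprimality at all.

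The one genuine gap is in your converse inclusion. Showing that $S^1$ acts on an invariant sphere $S_E$ with generic stabiliser $\mathbb{Z}_{|\omega(V)|}$, $|\omega(V)|\ge 2$, only gives $S_E\subset \M^{\mathbb{Z}_{|\omega(V)|}}$; the paper's definition of an isotropy sphere requires $S_E$ to be a connected \emph{component} of that fixed-point set, and a priori the component containing $S_E$ could be $4$-dimensional. Your ``Thus $S_E$ is a two-dimensional component'' skips exactly the step that the paper's first sentence supplies: if the component $N\supset S_E$ were $4$-dimensional, it would contain an $S^1$-fixed point (the maximum of the Hamiltonian restricted to $N$), at which two of the three weights would be divisible by $|\omega(V)|\ge 2$, contradicting pairwise coprimality of the weights there. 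This is easily repaired, but it uses the pairwise-coprimality-at-fixed-points aspect of the hypothesis, which your write-up never invokes (you only use that no weight lies in $\{-1,0,1\}$); without it the component claim does not follow from the generic-stabiliser computation alone.
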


\begin{proof}
Since the action is coprime, $\M$ doesn't have isotropy submanifolds of dimension $4$. 
%Since none of the weights are $\pm1$, to exclude the existence of an isotropy $4$-manifold we need to check that the weights at each fixed point are pairwise coprime. Since the action is effective, $a$ and $b$ are coprime. Hence, $a$ and $b$ are each  coprime to both $a+b$ and $a-b$, which implies the weights at $x_{00},x_{11},x_{03},x_{13}$ are pairwise coprime. Note that $-2a + b = -a + (b-a)$ is coprime with both $a$ and $b-a$, hence the weights at $x_{40}$ are pairwise coprime. Similarly, we may check that the weights at $x_{21}$ are pairwise coprime. 
At the same time the restriction of the $S^{1}$-action to each of the $\mathbb{T}^{2}$-invariant spheres has weight greater than $1$, so these are the isotropy spheres for the action. The last statement now follows immediately from Lemma \ref{chernclass} 1.
\end{proof}

\subsection{Projective manifolds with a $\mathbb C^*$-actions} \label{graphs}

Here state some known results about holomorphic group actions on projective manifolds. Recall that by Lemma \ref{btwokahler} any K\"{a}hler metric on $\M$ is projective, so we will be able to apply these Lemmas to prove Theorem \ref{main}.

The following standard lemma may be found in \cite[Corollary 2.125]{Besse}.
\begin{lemma}\label{holextension} Let $X$ be a K\"ahler manifold with an isometric and Hamiltonian $S^1$-action. Then the $S^1$-action extends to a holomorphic $\mathbb C^*$-action.
\end{lemma}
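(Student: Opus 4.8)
The statement to prove is Lemma~\ref{holextension}: a K\"ahler manifold $X$ with an isometric and Hamiltonian $S^1$-action admits an extension of this action to a holomorphic $\mathbb{C}^*$-action. Since the paper cites \cite[Corollary 2.125]{Besse} and calls it standard, my plan is to give a short conceptual proof that extracts exactly what is needed rather than reproving the full Besse statement from scratch.

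The plan is to start from the generating vector field. Let $\Theta$ be the fundamental vector field of the $S^1$-action, i.e. the real holomorphic Killing field generating the circle. First I would record that because the action is isometric and holomorphic (an isometry of a K\"ahler manifold preserving the complex structure is the setting here, and the $S^1$-action on a K\"ahler manifold compatible with the metric preserves $J$), the field $\Theta$ is a real holomorphic vector field: its flow consists of biholomorphisms. The key algebraic step is then to consider the field $J\Theta$. Since $\Theta$ is real holomorphic, $\Theta - iJ\Theta$ is a holomorphic vector field of type $(1,0)$ on $X$; equivalently $J\Theta$ is again a real holomorphic Killing-type field whose flow is holomorphic. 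Thus the two commuting real vector fields $\Theta$ and $J\Theta$ span an integrable two-dimensional Lie algebra of real holomorphic vector fields isomorphic to the Lie algebra of $\mathbb{C}^*$.

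Next I would integrate this Lie algebra to an action. The flow of $\Theta$ is already the given $S^1$-action and is defined for all real time with period normalized so that it closes up into a circle. The flow of $J\Theta$ is the ``gradient flow'' direction: because $X$ is compact (which is the case of interest here, as $\M$ and all K\"ahler manifolds we apply this to are compact), the flow of $J\Theta$ is complete, so $\Theta$ and $J\Theta$ together integrate to an action of the simply connected group $\mathbb{C} = \mathbb{R}^2$. The step requiring care is that this $\mathbb{C}$-action descends through the exponential $\mathbb{C} \to \mathbb{C}^*$: the $\Theta$-flow has a period lattice $2\pi\mathbb{Z}$ coming from the closed $S^1$-orbit, while the $J\Theta$-flow has no periodicity, so the kernel of the $\mathbb{R}^2$-action is exactly the lattice $\{0\}\times 2\pi\mathbb{Z}$, yielding an action of $\mathbb{R}\times S^1 \cong \mathbb{C}^*$. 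Holomorphicity of the resulting $\mathbb{C}^*$-action follows because both one-parameter flows are by biholomorphisms and the complexified field $\Theta - iJ\Theta$ is holomorphic, so the map $\mathbb{C}^* \times X \to X$ is holomorphic in both variables.

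I expect the main obstacle to be the completeness and integrability bookkeeping: verifying that $J\Theta$ really is real holomorphic (equivalently that its local flow preserves $J$) and that the two flows commute, so that they integrate to an honest group action rather than merely a local one. On a compact K\"ahler manifold completeness is automatic, which is why restricting to the compact case (our only case of interest) makes the argument clean; the holomorphicity of $J\Theta$ is where the K\"ahler condition $\nabla J = 0$ enters, ensuring that $\Theta$ being Killing and $J$-preserving forces $J\Theta$ to have holomorphic flow as well. Since all of this is classical and the quantitative content is not needed downstream, I would keep the proof brief and simply cite \cite[Corollary 2.125]{Besse} for the general statement while indicating the construction above.
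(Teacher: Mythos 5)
Your proposal is correct, but note that the paper offers no proof of this lemma at all: it is stated with a pointer to \cite[Corollary 2.125]{Besse} and nothing more, so your sketch reconstructs the standard argument behind that citation rather than paralleling an argument in the paper. Two refinements would tighten it. First, the reason the action preserves $J$ should be stated non-circularly: the action preserves $g$ (it is isometric) and preserves the K\"ahler form $\omega$ (it is Hamiltonian, hence symplectic), and $J$ is determined pointwise by the pair $(g,\omega)$; consequently $L_\Theta J=0$, from which your two key facts follow in one line each --- $[\Theta,J\Theta]=(L_\Theta J)\Theta+J[\Theta,\Theta]=0$, and $\Theta-iJ\Theta$ is a holomorphic section of $T^{1,0}X$, so the flow of $J\Theta$ is by biholomorphisms. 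Second, your instinct about compactness is right and worth making explicit: completeness of the $J\Theta$-flow (up to sign, the gradient flow of the moment map) is exactly where compactness enters, and the statement is false as literally written for non-compact $X$ --- for instance, remove the $S^1$-orbit of $(0,1)$ from $\mathbb{C}^2$ with the action $e^{i\theta}\cdot(z,w)=(z,e^{i\theta}w)$; any extension would have to be generated by $\Theta-iJ\Theta$, i.e.\ be $(z,w)\mapsto(z,\lambda w)$, which does not preserve this open set. Besse's result is for compact manifolds, and every application in the paper (Tolman's manifold, and the projective manifolds in Theorem \ref{blanch}) is compact, so restricting to that case as you do is the right reading of the lemma. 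Finally, your claim that the kernel of the $\mathbb{R}^2$-action is exactly $\{0\}\times 2\pi\mathbb{Z}$ is more than you need --- containment of the lattice suffices for the action to descend to $\mathbb{C}^*$ --- though it is true for a nontrivial action, since the moment map is strictly monotone along the $J\Theta$-flow away from the fixed-point set.
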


%\begin{proof} By Lemma \ref{holextension} the $S^1$-action on $X$ extends to a $\mathbb C^*$-action. Let $X_{\min}$ the submanifold of $X$ where the Hamiltonian attains its minimum $H_{\min}$. 
%
%Note first that $X$ is rationally connected, i.e., any two points on it can be joined by a connected union of rational curves. Indeed, for any point $x\in X$ there is a collection gradient spheres that joins it to $X_{\min}$ which is either a point of $\mathbb CP^1$.
%
%Since $X$ is rationally connected, it has $H^{2,0}=0$, and so it is projective (see \cite[Remark 0.5]{V}).
%
%Finally, to see that $X$ is rational, note that it is birational to its K\"ahler cut at level $H_{\min}+\varepsilon$. The latter space is either a weighted projective space or a weighted projective bundle over $\mathbb CP^1$ and so it is rational. \end{proof}

We will need the following result of Sommese \cite{So}.

\begin{theorem} \label{blanch}
Suppose that $X$ is a complex projective manifold, with an isometric and Hamiltonian $S^{1}$-action.  Then the $S^{1}$-action is the restriction of an algebraic $\C^{*}$-action on $X$.
\end{theorem}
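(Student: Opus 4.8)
The plan is to deduce Theorem~\ref{blanch} from the previous result, Lemma~\ref{holextension}, by reducing the projective case to the general K\"ahler statement. The only gap between the two statements is that Lemma~\ref{holextension} produces a holomorphic $\C^*$-action, whereas Theorem~\ref{blanch} asks for an \emph{algebraic} $\C^*$-action on the projective variety $X$. So the heart of the matter is an algebraicity (or, equivalently, a meromorphic-implies-rational) argument rather than any new construction of the action itself.

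Concretely, I would proceed as follows. First, since $X$ is projective it is in particular K\"ahler, so Lemma~\ref{holextension} applies and we obtain a holomorphic action $\C^*\times X\to X$ extending the given isometric Hamiltonian $S^1$-action. The task is to upgrade this holomorphic action to an algebraic morphism in the Zariski sense. The standard tool here is a theorem on automorphism groups of projective varieties: the connected component of the identity of the biholomorphism group of a smooth projective variety $X$ is a linear algebraic group acting algebraically on $X$ (this is classical, going back to work on $\mathrm{Aut}(X)$ as a group scheme and the fact that $\mathrm{Aut}^0(X)$ is a connected linear algebraic group when $X$ is projective). The holomorphic $\C^*$-action gives a one-parameter subgroup $\C^*\to \mathrm{Aut}^0(X)$ that is a morphism of complex Lie groups; because $\mathrm{Aut}^0(X)$ is algebraic and $\C^*$ is algebraic, a holomorphic homomorphism between such groups is automatically a morphism of algebraic groups, and the evaluation map $\mathrm{Aut}^0(X)\times X\to X$ is algebraic. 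Composing, the $\C^*$-action is algebraic, which is exactly the assertion.

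The key steps, in order, are: (i) invoke projective $\Rightarrow$ K\"ahler and apply Lemma~\ref{holextension} to get the holomorphic $\C^*$-action; (ii) cite the algebraicity of $\mathrm{Aut}^0(X)$ for projective $X$ and its algebraic action on $X$; (iii) observe that the holomorphic one-parameter subgroup $\C^*\to\mathrm{Aut}^0(X)$ is automatically algebraic, hence the whole action is algebraic. Alternatively, one can simply quote Sommese's original result directly, since the statement is attributed to \cite{So}; in that case the ``proof'' is essentially a pointer to the reference together with the reduction in step (i), noting that Sommese's hypotheses are met because a projective manifold with an isometric Hamiltonian $S^1$-action is exactly the setting he considers.

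I expect the main obstacle (to the extent there is one) to be purely expository: making sure the passage from ``holomorphic'' to ``algebraic'' is justified cleanly. The subtlety is that a holomorphic $\C^*$-action on a compact complex manifold need not be algebraic in general, and what rescues us is precisely projectivity, through the algebraicity of $\mathrm{Aut}^0$. Since the result is explicitly credited to Sommese, the cleanest writeup is likely just to state that the conclusion follows from \cite{So} applied to the holomorphic action furnished by Lemma~\ref{holextension}, rather than reproving algebraicity from scratch. I would therefore keep the proof short, anchoring it on the reference and the one-line reduction through the K\"ahler case.
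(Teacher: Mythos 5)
Your first step (projective $\Rightarrow$ K\"ahler, apply Lemma \ref{holextension}) agrees with the paper, and your ``alternative'' of simply quoting Sommese is in fact essentially the paper's actual proof: the paper combines \cite[Proposition 1]{So} and \cite[Proposition 2]{So} to extend the action map $F\colon X\times\mathbb{C}^*\to X$ to a meromorphic map $X\times\mathbb{CP}^1\to X$, and then applies Chow's theorem \cite{C} that a meromorphic map between projective varieties is rational, so $F$ is a morphism. The route you actually develop, however, contains a false step. It is not true that $\mathrm{Aut}^0(X)$ of a smooth projective variety is a \emph{linear} algebraic group: for an abelian variety $X$ one has $\mathrm{Aut}^0(X)\cong X$, acting by translations. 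Correspondingly, a holomorphic one-parameter subgroup $\mathbb{C}^*\to\mathrm{Aut}^0(X)$ need not be algebraic: for an elliptic curve $E=\mathbb{C}/\Lambda$ and $0\neq\omega\in\Lambda$, the map $w\mapsto[\omega\log w/(2\pi i)]$ is a well-defined, nonconstant holomorphic homomorphism $\mathbb{C}^*\to E$, while every algebraic homomorphism $\mathbb{C}^*\to E$ is constant, since abelian varieties contain no rational curves. ``Holomorphic implies algebraic'' is valid for homomorphisms into linear algebraic groups, but by Chevalley's theorem $\mathrm{Aut}^0(X)$ is in general only an extension of an abelian variety by a linear group, and your argument breaks precisely on the abelian part.

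The warning sign is that your algebraicity step never uses the Hamiltonian hypothesis, whereas the theorem is false without it: the translation example above gives an isometric, non-Hamiltonian $S^1$-action on an abelian variety whose holomorphic $\mathbb{C}^*$-extension is not algebraic. The gap can be closed as follows. A Hamiltonian action has fixed points, so the extended $\mathbb{C}^*$-action has a fixed point; since $\mathrm{Aut}^0(X)$ acts on the Albanese torus by translations, a subgroup with a fixed point on $X$ acts trivially on $\mathrm{Alb}(X)$, hence lies in the kernel of $\mathrm{Aut}^0(X)\to\mathrm{Aut}(\mathrm{Alb}(X))$, which is a linear algebraic group (Lieberman--Fujiki); finally, a holomorphic homomorphism from $\mathbb{C}^*$ into a linear algebraic group $G\subset GL_n(\mathbb{C})$ is algebraic (restrict to $S^1$, decompose into characters, and use that $S^1$ is a set of uniqueness for holomorphic maps on $\mathbb{C}^*$). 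With this insertion your approach becomes a correct alternative to the paper's meromorphic-extension argument; as written, it is not a proof.
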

\begin{proof} By Lemma \ref{holextension} the $S^{1}$-action extends to a holomorphic $\mathbb{C}^*$-action. The two main results of \cite{So} imply the action is algebraic. We give some details: Combining \cite[Proposition 1]{So} and  \cite[Proposition 2]{So} shows that the holomorphic map  $F: X \times \mathbb{C}^{*} \rightarrow X$ of the group action, extends to meromorphic map $\tilde{F} : X \times \mathbb{CP}^{1} \rightarrow X$. A  meromorphic map between projective varieties is rational \cite{C}. So in particular $F: X \times \mathbb{C}^{*} \rightarrow X$ is a morphism, i.e. the action is algebraic.
\end{proof}

%The following lemma is a particular case of a result proven in \cite{Hir} and we thank Michel Brion for pointing us to the reference.
%\begin{lemma}\label{coneofcurves} Let $X$ be a complex projective variety with an algebraic $\mathbb C^*$-action.  
%The cone of curves of $X$ is generated by $\mathbb C^*$-invariant curves.  
%\end{lemma}
%
%\begin{remark}\normalfont The actual result proved in \cite{Hir} states that for a projective variety with an algebraic action of a connected solvable group $B$, any effective cycle is rationally equivalent to a $B$-invariant effective cycle. The idea is to consider the $B$-action on the Chow variety $Z$ containing a given effective cycle $z$. Applying the Borel fixed point theorem, one finds a $B$-stable cycle $z_0\in \overline{Bz}$. Lemma \ref{coneofcurves} follows immediately from this result.
%\end{remark}

\subsection{Proof of Theorem \ref{main}}

In this section we prove Theorem \ref{main}. For this  we need two more results. In complete analogy with coprime $S^1$-actions, we define  coprime holomorphic $\mathbb C^*$-actions  as actions for which the weights at each fixed point are coprime. 

\begin{lemma}\label{coprimecurve} Let $X$ be a smooth complex projective variety with an algebraic coprime $\mathbb C^*$-action. Suppose that $C\subset X$ is a smooth compact irreducible $\mathbb C^*$-invariant curve. Then $C$ is an isotropy sphere in $X$ with respect to the corresponding $S^1$-action. 
\end{lemma}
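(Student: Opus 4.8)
The plan is to show that $C$ is a full connected component of some $X^{\mathbb{Z}_{k}}$ with $k\ge 2$ that is not contained in $X^{S^1}$, which is precisely the definition of an isotropy sphere. The coprimality hypothesis will enter twice: first to guarantee that $C$ is not pointwise fixed, and then, more essentially, to force $C$ to be an entire component of an isotropy locus rather than a proper submanifold of a higher-dimensional one.

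First I would note that, since the $S^1$-action is coprime, its fixed locus $X^{S^1}=X^{\mathbb{C}^*}$ is finite (the fixed points are isolated). As $C$ is one-dimensional it cannot be contained in $X^{S^1}$, so the induced $\mathbb{C}^*$-action on $C$ is non-trivial and the stabiliser of a generic point of $C$ is a proper finite subgroup of $\mathbb{C}^*$, hence a cyclic group. By the Borel fixed point theorem (or by taking the limits $t\to 0,\infty$ of a generic orbit, which exist since $C$ is proper) the $\mathbb{C}^*$-action on $C$ has a fixed point $p$; as $p\in X^{\mathbb{C}^*}=X^{S^1}$ it is one of the isolated fixed points of $X$, with well-defined integer weights $w_1,\dots,w_n$ on $T_pX$, where $n=\dim_{\mathbb{C}}X$.

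Next I would analyse the action along $C$ at $p$. The tangent line $T_pC\subset T_pX$ is $\mathbb{C}^*$-invariant, so the action on it has some weight $m$, equal to one of the $w_i$; say $m=w_1$. Non-triviality of the action on $C$ gives $m\ne 0$, and since a coprime action has no weight equal to $\pm 1$ we get $|m|\ge 2$. A local computation in a $\mathbb{C}^*$-equivariant coordinate on $C$ near $p$, where the action reads $t\cdot z=t^{m}z$, shows that the stabiliser of a nearby generic point is $\{t:t^{m}=1\}\cong\mathbb{Z}_{|m|}$; by irreducibility this is the generic stabiliser of $C$, so $C\subseteq X^{\mathbb{Z}_{|m|}}$ with $|m|\ge 2$.

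Finally I would show that $C$ is a whole component of $X^{\mathbb{Z}_{|m|}}$. Let $N$ be the connected component of the smooth symplectic submanifold $X^{\mathbb{Z}_{|m|}}$ containing $C$. Its tangent space at $p$ is the sum of those weight spaces $L_{w_i}\subset T_pX$ with $|m|\mid w_i$. Because the weights at $p$ are pairwise coprime and $|m|\ge 2$, the only weight divisible by $|m|$ is $m=w_1$ itself, so $T_pN=T_pC$ and $N$ is two-dimensional. As $C\subseteq N$ is a compact two-dimensional submanifold of the connected two-manifold $N$, it is open and closed in $N$, whence $C=N$. Thus $C$ is a connected component of $X^{\mathbb{Z}_{|m|}}$ with $|m|\ge 2$ not contained in $X^{S^1}$, i.e. an isotropy submanifold; being two-dimensional it is an isotropy sphere (and $C\cong S^2$ by the remark following the definition of isotropy spheres). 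The main obstacle is exactly this last step: pairwise coprimality of the weights is what rules out $C$ sitting inside a larger isotropy submanifold, by forcing $T_pC$ to be the unique $\mathbb{Z}_{|m|}$-fixed tangent direction at $p$.
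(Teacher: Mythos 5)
Your proof is correct, and it takes a genuinely different route from the paper's. The paper leans on algebraicity of the action: $C$ is the closure of a single $\mathbb{C}^*$-orbit, so after linearising at a fixed point $p$ it acquires the explicit monomial parametrisation $(t^{m_1}c_1,\ldots,t^{m_n}c_n)$ with $c_i=0$ whenever $m_i<0$, and smoothness of $C$ at $p$ is then argued to force exactly one $c_i\neq 0$ (with pairwise coprime weights, none equal to $\pm 1$, a monomial curve with two or more nonzero coordinates is singular at the origin); thus $C$ is locally a coordinate axis, hence an isotropy sphere. You never invoke the orbit-closure description: instead you read off the tangent weight $m$ at $p$, identify the generic stabiliser $\mathbb{Z}_{|m|}$ by linearising the action on $C$ itself, and then prove that $C$ is an entire connected component of $X^{\mathbb{Z}_{|m|}}$ via the divisibility count on the weight spaces of $T_pX$. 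Your route has the advantage of verifying the definition of isotropy sphere literally, and it makes explicit precisely the step the paper leaves implicit (that the coordinate axis is a \emph{whole} component of the $\mathbb{Z}_{|m|}$-fixed locus, which again needs pairwise coprimality); it also isolates where smoothness of $C$ is used (only to know $T_pC$ is a line). The paper's route is shorter and gives the sharper local picture of $C$ as an axis near each of its fixed points, although its assertion that the tangent space dimension equals the number of nonzero $c_i$'s is not literally exact as stated (weights $2,3,5$ with all three $c_i\neq 0$ give tangent dimension $2$, since $5=2+3$); what is true, and is all that is needed, is that two or more nonzero $c_i$'s force a singular point. In both arguments coprimality enters at the same two places: to exclude weights $0,\pm 1$, and to prevent a second tangent direction at $p$ from being fixed by the same cyclic group.
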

\begin{proof} Note that since the $\mathbb C^*$-action is algebraic, $C$ is a compactification of a $\mathbb C^*$-orbit and it has two fixed points. Let $p$ be the fixed point such that the weight at $p$ of the restriction of the $\mathbb{C}^*$-action is positive. Then there exists a small neighbourhood of $p$ where the $\mathbb C^*$-action can be linearised, and it has the form $(z_1,\ldots,z_n)\to (t^{m_1}z_1,\ldots, t^{m_n}z_n)$, where $m_i$ are pairwise coprime weights. So the curve $C$ has a local parametrization $(t^{m_1}c_1,\ldots, t^{m_n}c_n)$ for some collection of complex numbers $c_i$ ($c_i$ has to be zero if $m_i<0$). It follows that the tangent space to $C$ at $(0,\ldots,0)$ has dimension equal to the number of $c_i$'s that are non-zero. In particular it is $1$-dimensional if and only if exactly one of $c_i$'s is non zero, in which case the curve $C$ is an isotropy sphere. This proves the lemma.
\end{proof}

\begin{proposition}\label{badinvariantS} Let $E$ be a holomorphic rank $2$ bundle over $\mathbb CP^2$ with $c_1(E)=-1$, $c_2(E)=-1$. Then for any algebraic $\mathbb C^*$-action on $\mathbb P(E)$ there is a $\mathbb C^*$-invariant smooth rational $S\subset \mathbb P(E)$ such that $\int_{S}c_1(T\mathbb P(E))\le -2$.
\end{proposition}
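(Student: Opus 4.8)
The plan is to locate a $\mathbb C^*$-invariant copy of the negative section already produced in Corollary \ref{badline}, rather than to build an invariant curve from scratch. The point is that the bad curve $S$ there is intrinsic to the Hirzebruch surface $D=p^{-1}(L)$ — it is the \emph{unique} irreducible curve of negative self-intersection in $D$ — so it will be automatically $\mathbb C^*$-invariant as soon as $D$ is. To get an invariant $D$ I first show the projection $p\colon\PP(E)\to\mathbb CP^2$ is equivariant. Since $\mathbb C^*$ is connected it acts trivially on $H_2(\PP(E),\Z)$, so it fixes the fibre class $f$. Because $p$ is the Mori contraction of the extremal ray $\R_{\ge 0}[f]$ (note $c_1\cdot f=(2\eta+2\xi)\cdot f=2>0$, so $f$ is $K$-negative) and this ray is preserved, the action descends to an algebraic $\mathbb C^*$-action on $\mathbb CP^2$ for which $p$ is equivariant.

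\emph{Producing an invariant ``bad'' point.} Next I would linearize the bundle. The class $\xi=c_1(\mathcal O_{\PP(E)}(1))$ is fixed by $\mathbb C^*$, so $\mathcal O_{\PP(E)}(1)$ is an invariant line bundle, hence $\mathbb C^*$-linearizable since $\mathrm{Pic}(\mathbb C^*)=0$. Pushing it forward along the equivariant $p$ equips $E$, and therefore $E(-1)$, with a $\mathbb C^*$-equivariant structure, so that $H^0(\mathbb CP^2,E(-1))$ becomes a $\mathbb C^*$-representation. By Lemma \ref{exactseq} this space is nonzero; choosing a weight vector $s$ gives a semi-invariant section, whose zero scheme $\{s=0\}$ is therefore $\mathbb C^*$-invariant. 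As $c_2(E(-1))=1$, this scheme is a single reduced point $z$, so $z$ is a $\mathbb C^*$-fixed point of $\mathbb CP^2$.

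\emph{Producing $S$.} Writing $\mathbb CP^2=\PP(W)$, the fixed point $z$ corresponds to a weight line in $W$; adjoining any other weight line yields a $\mathbb C^*$-invariant $2$-plane, i.e. a $\mathbb C^*$-invariant line $L\ni z$. Exactly as in the proof of Corollary \ref{badline} 1), every line through $z$ is bad, so $E|_L\cong\mathcal O(n)\oplus\mathcal O(-1-n)$ with $n\ge 2$. Hence $D=p^{-1}(L)$ is a $\mathbb C^*$-invariant Hirzebruch surface $\mathbb F_m$ with $m\ge 5$, and its negative section $S$, being the unique irreducible curve of negative self-intersection in $D$, is preserved by every automorphism of $D$ and so is $\mathbb C^*$-invariant. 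It is smooth rational, and Corollary \ref{badline} 2) gives $\int_S c_1(T\PP(E))\le -2$, as required.

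The main obstacle is the second step: upgrading the abstract $\mathbb C^*$-action on $\PP(E)$ to genuine equivariance of the bundle $E$, which is what lets one pin down an invariant point $z$ and hence an invariant bad line. Once this equivariance is in hand the remainder is soft, since invariance of $S$ follows formally from the uniqueness of the negative section of $D$.
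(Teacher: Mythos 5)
Your first step (descending the action along $p$ via the extremal ray), your linearization of $\mathcal{O}_{\mathbb{P}(E)}(1)$ using $\mathrm{Pic}(\mathbb C^*)=0$, and your final step (invariance of the negative section $S$ because it is the unique negative curve in the invariant surface $D=p^{-1}(L)$) are all sound, and the last of these is exactly how the paper concludes. The gap is in the middle, at the sentence ``As $c_2(E(-1))=1$, this scheme is a single reduced point $z$.'' The second Chern class computes the length of the zero scheme of a section only when that section vanishes in codimension two; a section of a rank-two bundle may instead vanish along a divisor, and neither Lemma \ref{exactseq} nor anything else you invoke guarantees that your weight vector $s$ — or indeed any section of $E(-1)$ — vanishes in codimension two.

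This is not a hypothetical worry. Let $Z\subset \mathbb CP^2$ be five general points and let $E$ be a locally free extension $0\to\mathcal{O}(2)\to E\to I_Z(-3)\to 0$ given by the Serre construction, where $I_Z$ is the ideal sheaf of $Z$; such a locally free $E$ exists because the relevant Cayley--Bacharach condition is vacuous (the line bundle $K_{\mathbb CP^2}\otimes\mathcal{O}(-3)\otimes\mathcal{O}(-2)=\mathcal{O}(-8)$ has no sections), and it has $c_1(E)=-1$, $c_2(E)=-6+5=-1$. For this bundle every nonzero section of $E(-1)$, i.e.\ every nonzero map $\mathcal{O}(1)\to E$, has zero composite with $E\to I_Z(-3)$ for degree reasons, hence factors as $\mathcal{O}(1)\xrightarrow{f}\mathcal{O}(2)\subset E$ and vanishes along the entire line $\{f=0\}$. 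So for such $E$ no section of $E(-1)$ has zero scheme a single point, the fixed point $z$ you construct does not exist as described, and the subsequent claim ``every line through $z$ is bad'' has nothing to stand on. Note that the paper's Corollary \ref{badline} is careful at precisely this spot: it asserts only that $\{s=0\}$ is non-empty and then chooses a line meeting it in a \emph{finite} non-empty set — a genericity choice you cannot make equivariantly without further argument. Your approach can be repaired (for instance: the maximal destabilizing sub-line-bundle $\mathcal{O}(a)\subset E$, $a\ge 1$, is unique, hence preserved by the equivariant structure; if $a\ge 2$ then every line is a jumping line and any invariant line works, while if $a=1$ then $h^0(E(-1))=1$ and the unique section does vanish at a single point), but this requires a case analysis you do not carry out. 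The paper avoids equivariant sections altogether: it considers the locus $Y$ of jumping lines in the dual plane, which is closed by \cite[Lemma 3.2.2]{OSS}, non-empty by Corollary \ref{badline}, and invariant, and then takes a fixed point of the induced $\mathbb C^*$-action on the projective variety $Y$, i.e.\ an invariant jumping line — a softer argument that sidesteps your gap entirely.
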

\begin{proof} Note that the $\mathbb C^*$-action on $\mathbb P(E)$ sends -fibres to fibres, and so it induces a $\mathbb  C^*$-action on $\mathbb CP^2$ due to Blanchards Lemma \cite{Bl}. Let $\mathbb C{P^2}^*$ be the space of projective lines in $\mathbb CP^2$ and $Y\subset \mathbb C{P^2}^*$ be the sub-variety consisting of projective lines $L$ such that the restriction $E|_L$ splits as $\mathcal{O}(n) \oplus \mathcal{O}(-1-n)$, where $n\ge 2$. We know by Corollary \ref{badline} (1) that $Y$ is non-empty. Moreover, $Y$ is a closed analytic subset of $ \mathbb C{P^2}^*$ by \cite[Lemma 3.2.2]{OSS}. Since $Y$ is invariant under the $\mathbb C^*$-action it follows that the action of $\mathbb C^*$ on $Y$ has a fixed point. We conclude that there is a $\mathbb C^*$-invariant line $L$ in $\mathbb CP^2$, such that $E|_L$ is $\mathcal{O}(n) \oplus \mathcal{O}(-1-n)$ where $n\ge 2$.

Now, let $S\subset p^{-1}(L)=D$ be the unique smooth rational curve with negative self-intersection, as in Corollary  \ref{badline} (2). Then by Corollary  \ref{badline} (2) we have $\int_{S}c_1(T\mathbb P(E))\le -2$. At the same time, since $D$ is $\mathbb C^*$-invariant, and $S$ has negative self-intersection in $D$ we see that
$S$ is $\mathbb C^*$-invariant as well. 
\end{proof}

Finally we are ready to prove Theorem \ref{main}.

\begin{proof}[Proof of Theorem \ref{main}] We assume for a contradiction that $\M$ admits a compatible K\"ahler metric invariant under a coprime $S^1$-action. Then, as in the proof of Theorem \ref{newmain}, we get that $\M$ is biholomorphic to the projectivisation of a holomorphic rank two bundle $E$ over $\mathbb CP^2$ with $c_1(E)=-1$, $c_2(E)=-1$. By  Proposition \ref{blanch} the isometric $S^1$-action on $\PP(E)$ extends to an algebraic $\mathbb C^*$-action. So, by Proposition \ref{badinvariantS} there is a smooth $\mathbb C^*$-invariant curve $S$ in $\PP(E)$, such that $\int_{S}c_1(T\PP(E))\le -2$. At the same time, since the $\mathbb C^*$-action is coprime, we know by Lemma \ref{coprimecurve}  that $S$ is an isotropy sphere. This contradicts Lemma \ref{4free}, which states that $c_1(T\PP(E))$ is non-negative on all isotropy spheres in $\M$.
\end{proof}

Nicholas Lindsay, Institute of Mathematical Sciences, ShanghaiTech University No. 393 Huaxia Middle Road, Pudong New Area, Shanghai. lindsaynj@shanghaitech.edu.cn

Dmitri Panov, King's College London, UK.  dmitiri.panov@kcl.ac.uk
  
\end{document}